\newtheorem{theorem}{Theorem}[section]
\newtheorem{lemma}[theorem]{Lemma}
\newtheorem{prop}[theorem]{Proposition}
\newtheorem{corollary}[theorem]{Corollary}
\theoremstyle{definition}
\newtheorem{rem}[theorem]{Remark}
\newtheorem{exam}[theorem]{Example}
\title{Solvable, reductive and quasireductive supergroups}
\author{A.~N.~Grishkov and A.~N.~Zubkov}
\address{Alexander N.~Zubkov: 
Department of Mathematics, 
Omsk State Pedagogical University, 
Omsk--644099, Russia}
\email{a.zubkov@yahoo.com}
\address{Alexandr N.~Grishkov:
Departamento de Matematica, Universidade de Sao Paulo, Caixa Postal 66281, 05315-970 São Paulo, Brazil}
\email{shuragri@gmail.com}
\begin{document}

\maketitle

\section*{Introduction}

The notion of an algebraic supergroup is a generalization of the notion of an algebraic group. 
It is natural to ask which properties of algebraic groups remain valid also for algebraic supergroups.
For example, it is well known that if the ground field $K$ has characteristic zero
and $G$ is a connected algebraic group, then the Lie algebra $\mathsf{Lie}(G')$ of the commutant $G'$ of $G$ coincides
with the commutant $\mathsf{Lie}(G)'$ of $\mathsf{Lie}(G)$. Using the technique of Harish-Chandra superpairs,
recently developed in \cite{carfior, mas, vish}, we find a rather simple counterexample to analogous statement for algebraic supergroups (see Section 10 below). 
In other words, the equality $\mathsf{Lie}(G')=\mathsf{Lie}(G)'$ no longer holds in the category of algebraic supergroups!

Another interesting question is how are properties of an algebraic supergroup $G$ related to the same properties of its largest even subgroup $G_{ev}$.  
The second author proved that an algebraic supergroup $G$ is unipotent if and only if $G_{ev}$ is. The same result
has been reproved in \cite{mas} using the technique of Harish-Chandra superpairs. 
Other result of this kind proved in \cite{mas} states that a supergroup $G$ is simply connected if and only if $G_{ev}$ is.

Recall that every (even affine) supergroup $G$ has the largest normal unipotent supersubgroup $G_u$ that is called the unipotent radical of $G$, (cf. \cite{mas, zubul}). 
Keeping in mind the properties of algebraic groups, one can formulate the following questions.

Suppose that $H$ is a normal connected supersubgroup of a connected algebraic supergroup $G$. Is it true that $H_u=G_u\bigcap H$? 
Analogous property is valid in the category of algebraic groups (however, if the characteristic of $K$ is positive, then one has to assume additionally that both groups 
$G$ and $H$ are reduced). 

Consider the relationship of $(G_{ev})_u$ and $(G_u)_{ev}$. 
It is easy to see that if $(G_{ev})_u =1$, then $G_{u}$ is a finite odd supergroup. 
Next question we ask is whether $G_u =1$ implies that $(G_{ev})_u$ is finite?

Surprisingly, answers to both questions above are again negative. In Section 8 we construct a semi-direct product
of two abelian supergroups $H=X\rtimes G$ such that $H_u=1$ but $X_u\neq 1$. Additionally, $(H_{ev})_u$ is non-trivial connected supergroup.
  
We will use the same notation as in the classical setting and call a supergroup $G$ reductive if $G_u=1$.  
For the qround field of characteristic zero, a concept of quasi-reductive supergroup was introduced in \cite{serg}. 
Namely, $G$ is called quasi-reductive whenever $G_{ev}$ is reductive, or equivalently, $G_{ev}$ is linearly reductive. 
Note that $G_{ev}$ is reductive if and only if $\mathsf{Lie}(G_{ev})=\mathsf{Lie}(G)_0$ is a reductive Lie algebra. 
The Lie superalgebras of quasi-reductive supergroups were described in \cite{serg}. 
Any such Lie superalgebra $\mathfrak{L}$ has a filtration by superideals
$Z(\mathfrak{L})\subseteq\mathfrak{C}\subseteq\mathfrak{L}$, where $Z(\mathfrak{L})$ is the center of $\mathfrak{L}$,
$\mathfrak{C}/Z(\mathfrak{L})$ is a direct sum of minimal superideals and $\mathfrak{L}/\mathfrak{C}$ is semi-direct product
of its even part and an odd abelian superideal. 

However, the above nice description is not entirely satisfactory since we do not know how this filtration of $\mathfrak{L}$ corresponds to the subgroup structure of a related supergroup $G$. In other words, we would like to know if there are normal supersubgroups $Z$ and $C$ of $G$ 
such that $\mathsf{Lie}(Z)=Z(\mathfrak{L})$ and $\mathsf{Lie}(C)=\mathfrak{C}$. We prove that $Z$ is just the center $Z(G)$ of $G$ but the problem whether the superideal $\mathfrak{C}$ is algebraic remains unsolved.

Therefore we decided to develop a different approach working with supergroups. 
First we prove that if $\mathsf{Lie}(G)$ is semisimple, then there is a pair of supergroups $(U, H)$ (called a sandwich pair) such that $U\unlhd H$ and $U\leq G\leq H$ 
(see Theorem \ref{sandwich}). Moreover, 
$\mathsf{Lie}(U)=\mathfrak{U}$, $\mathsf{Lie}(H)=Der(\mathfrak{U})$ and the embedding
of $\mathfrak{G}=\mathsf{Lie}(G)$ into $Der(\mathfrak{U})$ satisfies conditions of Theorem 6 of \cite{kac}.

Afterward, we work with the solvable radical $R(G)$ of $G$. Then the Lie superalgebra of $\tilde{G}=G/R(G)$ is semisimple, and therefore
$\tilde{G}$ can be inserted as a middle term into a sandwich pair $(U, H)$. Since $\tilde{G}$ is also quasi-reductive,  
$U$ is quasi-isomorphic to a direct product of its normal supersubgroups $U_i$, where each $\mathsf{Lie}(U_i)$
belongs to the same family of minimal superideals as in Theorem 6.9 of \cite{serg}.
Moreover, $R(G)$ and $\tilde{G}/U$ are triangulizable supergroups (for the definition, see the end of Section 6) and their unipotent radicals are odd unipotent. 
Conversely, if $G$ has the above filtration by normal supersubgroups, then $G$ is quasi-reductive.

At the final step we observe that if $G$ is reductive and $G$ has no central toruses, then $\mathsf{Lie}(G)$ is semisimple. 
This means that, in principle, reductive supergroups can be described using sandwich pairs.

The paper is organized as follows. In the first six sections we give all necessary definitions, notations and derive auxiliary 
results. The most important results in these sections are Lemma \ref{AdcommuteswithLieoperation} and properties of the action of an affine supergroup on another affine supergroup by supergroup automorphisms. In the seventh section of this paper we completely describe such actions on an abelian supergroup. 
Based on that, in the eight section, we construct a semi-direct product $H=X\rtimes G$ such that $H_u =1$ but both $X_u$ and $(H_{ev})_u$ contain an one-dimensional unipotent subgroup.  The ninth section is devoted to a simple criterion describing when a supersubalgebra of a given algebraic Lie superalgebra is also algebraic. 
This is accomplished using the technique of Harish-Chandra superpairs. As a by-product, we describe when $\mathsf{Lie}(G')$ coincides with $\mathsf{Lie}(G)'$. 
In the tenth section we construct a supergroup $G$ such that $\mathsf{Lie}(G')\neq \mathsf{Lie}(G)'$ and give a simplest example of $G$ with this property.
The last section is devoted to properties of reductive and quasi-reductive supergroups described earlier in this introduction.

\section{Affine and algebraic supergroups}

In this section we follow definitions and notations from \cite{jan, maszub, zub}. 

Let $K$ be a field of characteristic $char K\neq 2$.
A \emph{vector superspace} is a vector space $V$ (over $K$) graded by the group $\mathbb{Z}_2 = \{0, 1\}$. The homogeneous components of $V$ are denoted by $V_0$, $V_1$. The degree (or {\it parity}) of a homogeneous element $v$ is denoted by $|v|$. If $V$ and $W$ are superspaces, then $\mathsf{Hom}_K(V, W)$ has the natural superspace structure defined by 
$$\mathsf{Hom}_K(V, W)_i=\{\phi| \phi(V_j)\subseteq W_{i+j}, i, j\in\mathbb{Z}_2\} .$$
Let $\mathsf{SMod}_K$ denote the $K$-linear abelian category of vector superspaces with even morphisms.
This is a tensor category with the canonical symmetry
$$ t=t_{V, W} :V \otimes W \overset{\simeq}{\longrightarrow} W \otimes V, \quad v \otimes w \mapsto (-1)^{|v||w|} w \otimes v,$$
where $V, W \in \mathsf{SMod}_K$. 

Classical definitions of algebraic objects can be extended to this symmetric tensor category using the adjective `super'. For example,
a \emph{(Hopf) superalgebra} is a (Hopf) algebra object in $\mathsf{SMod}_K$, and analogously for other definitions.  
Additionally, all superalgebras are assumed to be unital.  A superalgebra $A$ is called
{\it supercommutative}, if $ab=(-1)^{|a||b|}ba$ for all homogeneous elements $a, b\in A$.
Denote by $\mathsf{SAlg}_K$ the category of supercommutative superalgebras.

Any functor from $\mathsf{SAlg}_K$ to the category of sets is called a $K$-functor. For example, if
$V$ is a superspace, then one can define a $K$-functor by $V_a(A)=V\otimes A$ for $A\in\mathsf{SAlg}_K$.

A $K$-functor $X$ is said to be an {\it affine superscheme}, if $X$ is represented by a superalgebra
$A\in\mathsf{SAlg}_K$. In other words, $X(B)=\mathsf{Hom}_{\mathsf{SAlg}_K}(A, B)$ for $B\in\mathsf{SAlg}_K$. 
In notations from \cite{maszub, zub}, this functor $X$ is denoted by $SSp \ A$. 
A $K$-functor morphism $SSp \ A\to SSp \ B$ is (uniquely) defined by the dual superalgebra morphism $\phi^* : B\to A$.  

A closed subfunctor $Y$ of 
$X=SSp \ A$ is defined by a superideal $I_Y =I$ of $A$ such that $Y(B)=\{x\in X(B)| x(I)=0\}$. 
Thus $Y=V(I)\simeq SSp \ A/I$ is again an affine superscheme.
An open subfunctor $Y$ of $X$ is also defined by a superideal $I$ as $Y(B)=\{x\in X(B) | Bx(I)=B\}$.
For example, any $f\in A_0$ defines an open subfunctor $X_f$ that corresponds to the ideal $Af$.

For a $K$-functor $X$, define a subfunctor $X_{ev}(A)=X(\iota_0)X(A_0)$ for $A\in \mathsf{SAlg}_K$, where $\iota_0$ is the natural algebra embedding $A_0\to A$. 
For example, $(SSp \ R)_{ev}$ is a closed supersubscheme of $SSp \ R$, defined by the ideal $RR_1$.  

An affine superscheme $G=SSp \ A$ is a group $K$-functor if and only if $A$ is a Hopf superalgebra. If it is the case, then $G$ is called an {\it affine supergroup}. 
Besides, if $A=K[G]$ is finitely generated, then $G$ is called an {\it algebraic supergroup}. 
Denote by $\epsilon_G, \Delta_G$ and $s_G$ the counit, comultiplication and antipode of $K[G]$, respectively.
We use Sweedler notation, that is for every $f\in A$ we write
$$\Delta_G(f)=\sum f_1\otimes f_2,$$ 
$$(id_A\otimes\Delta_G)\Delta_G(f)=(\Delta_G\otimes id_A)\Delta_G(f)=\sum
f_1\otimes f_2\otimes f_3,$$
and so on.
The augmentation superideal $\ker\epsilon_G$ is denoted by $K[G]^+$. Moreover, for any supersubspace $V\subseteq K[G]$
denote $V\bigcap K[G]^+$ by $V^+$. 

\begin{exam}\label{generallinear}
Let $V$ be a vector superspace of superdimension $m|n$, that is, $\dim V_0=m, \dim V_1=n$. Denote by $GL(V)$, or
by $GL(m|n)$, the corresponding general linear supergroup which is a group $K$-functor such that for any $A\in\mathsf{SAlg}_K$ the group $GL(V)(A)$ consists
of all even and $A$-linear automorphisms of $V\otimes A$. 
Denote the generic matrix $(c_{ij})_{1\leq i, j\leq m+n}$ by $C$ and $d=\det(C_{00})\det(C_{11})$, where
$C_{00}=(c_{ij})_{1\leq i, j\leq m}$ and $C_{11}=(c_{ij})_{m+1\leq i, j\leq m+n}$ are even blocks of $C$.
It is easy to see that $K[GL(m|n)]=K[c_{ij}|1\leq i, j\leq m+n]_{d}$, where $|c_{ij}|=0$ if and only if $1\leq i, j\leq m$ or $m+1\leq i, j\leq m+n$.

The Hopf superalgebra structure of $K[GL(m|n)]$ is defined via
$$\Delta_{GL(m|n)}(c_{ij})=\sum_{1\leq k\leq m+n} c_{ik}\otimes c_{kj}, \ \epsilon_{GL(m|n)}(c_{ij})=\delta_{ij}, 1\leq i, j\leq m+n.$$
\end{exam}
\begin{exam}
Recall the definitions of (one-dimensional) torus $G_m$ and (one-dimensional connected) unipotent group 
$G_a$.

If $G=G_m$, then $K[G]=K[t^{\pm 1}], \Delta_G(t)=t\otimes t, \epsilon_G(t)=1$ and $s_G(t)=t^{-1}$.

If $G=G_a$, then $K[G]=K[t], \Delta_G(t)=t\otimes 1 +1\otimes t, \epsilon_G(t)=0$ and $s_G(t)=-t$. 
In both cases $|t|=0$.
\end{exam}
\begin{exam}
A supergroup $G$ is called (one-dimensional) {\it odd unipotent}, and denoted by $G_a^-$, if $K[G]=K[t], |t|=1, 
\Delta_G(t)=t\otimes 1 +1\otimes t, \epsilon_G(t)=0$ and $s_G(t)=-t$.
\end{exam}
A closed supersubscheme $H$ of $G$ is a subgroup functor if and only if $I_H$ is a Hopf superideal of $K[G]$. In what follows, all supersubgroups are assumed to be closed unless otherwise stated. If $H$ is a supersubgroup of $G$, we denote it by $H\leq G$. For example, $G_{ev}\leq G$ and $G_{ev}$ is called the {\it largest even} supersubgroup of $G$. 

If $H(A)$ is a normal subgroup of $G(A)$ (we denote this by $H(A)\unlhd G(A)$) for every $A\in\mathsf{SAlg}_K$, then $H$ is called a {\it normal} supersubgroup of $G$ and we write  
$H\unlhd G$. 
The center of $G$ is a $K$-subgroup functor $Z(G)$ such that for every $A\in\mathsf{SAlg}_K$ :
$$Z(G)(A)=\{g\in G(A) |\forall A'\in\mathsf{SAlg}_K, \forall \phi : A\to A', G(\phi)(g)\in Z(G(A'))\}.$$
One can show that $Z(G)$ is a supersubgroup (see \cite{zubul}) and it is clear that $Z(H)\unlhd G$ whenever $H\unlhd G$. 

If $\phi : G\to H$ is a supergroup morphism, then $\ker\phi$ is a normal supersubgroup of $G$ that is defined by the Hopf superideal $K[G]\phi^*(K[H]^+)$.

\section{Superalgebras of distributions and Lie superalgebras}

Let $G$ be an algebraic supergroup. Denote $K[G]^+$ by $\mathfrak{m}$. A Hopf superalgebra of distributions $\mathsf{Dist}(G)$ coincides with $\bigcup_{l\geq 0}\mathsf{Dist}_l(G)$, where each component $\mathsf{Dist}_l(G)=(K[G]/\mathfrak{m}^{l+1})^*$ is a supersubcoalgebra dual to 
a superalgebra $K[G]/\mathfrak{m}^{l+1}$ (cf. \cite{jan, zub}).  
The superalgebra structure of $\mathsf{Dist}(G)$ is given by the convolution
$$\phi\star\psi(f)=\sum (-1)^{|\psi||f_1|}\phi(f_1)\psi(f_2), f\in K[G].$$
More generally, for any $A\in\mathsf{SAlg}_K$ the superspace $\mathsf{Hom}_K(K[G], A)$ is an unital superalgebra with respect to the same convolution and unit $\epsilon_G$.

The supersubspace $\mathsf{Dist}_1^+$ has a Lie superalgebra structure with respect to the operation 
$[\phi, \psi]=\phi\star\psi-(-1)^{|\phi||\psi|}\psi\star\phi$. It is called a {\it Lie superalgebra} of $G$ and denoted by
$\mathsf{Lie}(G)$. If $char K=0$, then $\mathsf{Dist}(G)$ coincides with the {\it universal enveloping} superalgebra $\mathsf{U}(\mathsf{Lie}(G))$ of $\mathsf{Lie}(G)$.
\begin{exam}
$\mathsf{Lie}(GL(m|n))=\mathfrak{gl}(m|n)$ is the general linear Lie superalgebra (cf. \cite{frsorsci}).
\end{exam}
The $K$-functor $\mathsf{Lie}(G)_a$ is isomorphic to the {\it Lie superalgebra functor} ${\bf Lie}(G)$ of $G$ (see Lemma 3.2 of \cite{zub}) defined as follows.
For $A\in\mathsf{SAlg}_K$, let $A[\epsilon_0,  \epsilon_1]$ be a (supercommutative) superalgebra of dual numbers.
By definition, $A[\epsilon_0,  \epsilon_1] = \{a+\epsilon_0 b+\epsilon_1 c |  a, b, c\in A\}$,  
$\epsilon_i\epsilon_j = 0, |\epsilon_i|=i$, where $i, j\in\{0, 1\}$.
There are two morphisms of superalgebras $p_A : A[\epsilon_0,  \epsilon_1]\to A$ and $i_A : A\to A[\epsilon_0,  \epsilon_1]$
defined by $a+\epsilon_0 b+\epsilon_1 c\mapsto a$ and a $a\mapsto a$, respectively. The functor ${\bf Lie}(G)$ is defined 
as ${\bf Lie}(G)(A) = \ker(G(p_A))$ for $A\in\mathsf{SAlg}_K$. 

The isomorphism $\mathsf{Lie}(G)_a\simeq{\bf Lie}(G)$ is given by
$$(v\otimes a)(f) = \epsilon_G(f) + (-1)^{|a||f|}\epsilon_{|v\otimes a|}v(f)a,$$
where $v\in \mathsf{Lie}(G) = (\mathfrak{m}/\mathfrak{m}^2)^* $, 
$a\in A$ and $f\in K[G].$

If we identify $\mathsf{Lie}(G)\otimes A$ with $\mathsf{Hom}_K(\mathfrak{m}/\mathfrak{m}^2 , A)$ via $(v\otimes a)(f)
=(-1)^{|a||f|}v(f)a$, then the above isomorphism can be represented as
$$u\mapsto \epsilon_G + \epsilon_0 u_0 + \epsilon_1 u_1$$
for  $u\in\mathsf{Hom}_K(\mathfrak{m}/\mathfrak{m}^2 , A).$

The supergroup $G$ acts on the functor ${\bf Lie}(G)$ by
$$(g, x)\mapsto G(i_A)(g)xG(i_A)(g)^{-1}$$ 
for $g\in G(A)$, $x\in {\bf Lie}(G)(A)$ and $A\in\mathsf{SAlg}_K.$
This action is called the {\it adjoint} action and denoted by $\mathsf{Ad}$. 
According to Lemma 3.2 of \cite{zub}, $\mathsf{Ad} : G\to GL(\mathsf{Lie}(G))$ is a morphism of supergroups.

Let $\mathfrak{L}$ be a Lie superalgebra. For any $A\in\mathsf{SAlg}_K$ one can define a Lie superalgebra structure on $\mathfrak{L}\otimes A$
by the rule
$$[x\otimes a, y\otimes b]=(-1)^{|a||y|}[x, y]\otimes ab$$
for $x, y\in \mathfrak{L}$ and $a, b\in A.$
Therefore $\mathfrak{L}_a$ is a functor from $\mathsf{SAlg}_K$ to the category of Lie superalgebras.

As above, the (super)commutator $[\phi, \psi]=\phi\star\psi-(-1)^{|\phi||\psi|}\psi\star\phi$ defines a Lie superalgebra structure on $\mathsf{Hom}_K(K[G], A)$.  
\begin{lemma}\label{convolution}
The superspace $\mathsf{Hom}_K(\mathfrak{m}/\mathfrak{m}^2 , A)$ is a Lie supersubalgebra of
$\mathsf{Hom}_K(K[G], A)$ that is isomorphic to $\mathsf{Lie}(G)\otimes A$.
\end{lemma}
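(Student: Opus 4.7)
The plan is first to exhibit $\mathsf{Hom}_K(\mathfrak{m}/\mathfrak{m}^2,A)$ as a subspace of $\mathsf{Hom}_K(K[G],A)$ by extending each $u$ by zero on the line $K\cdot 1$ and on $\mathfrak{m}^2$, using the decomposition $K[G]=K\cdot 1\oplus\mathfrak{m}$ and the fact that $u$ factors through $\mathfrak{m}/\mathfrak{m}^2$. Call the image $V$. To see that $V$ is closed under $[\,,\,]$, I would verify that $[u,v]$ vanishes on $1$ and on $\mathfrak{m}^2$ whenever $u,v\in V$. The first is immediate from $\Delta_G(1)=1\otimes 1$, which gives $(u\star v)(1)=u(1)v(1)=0$. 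For the second, take homogeneous $g,h\in\mathfrak{m}$ and write
\[\Delta_G(g)=g\otimes 1+1\otimes g+r_g,\qquad \Delta_G(h)=h\otimes 1+1\otimes h+r_h,\]
with $r_g, r_h\in\mathfrak{m}\otimes\mathfrak{m}$; this is a consequence of the counit axiom applied to $g,h\in\ker\epsilon_G$.

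Expanding $\Delta_G(gh)=\Delta_G(g)\Delta_G(h)$ and applying $u\otimes v$ with the convolution sign $(-1)^{|v||f_1|}$, every term either contains a factor in $K\cdot 1$ (killed by $u$ or $v$) or a factor in $\mathfrak{m}^2$ (again killed by $u$ or $v$), except for the ``cross'' contributions coming from $g\otimes h$ and $(-1)^{|g||h|}h\otimes g$ in $\Delta_G(g)\Delta_G(h)$. This leaves
\[(u\star v)(gh)=(-1)^{|v||g|}u(g)v(h)+(-1)^{|g||h|+|v||h|}u(h)v(g),\]
and symmetrically for $(v\star u)(gh)$. Moving $u(\cdot)$ past $v(\cdot)$ using the supercommutativity of $A$ and collecting Koszul signs, one checks that $(u\star v)(gh)=(-1)^{|u||v|}(v\star u)(gh)$, hence $[u,v](gh)=0$. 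Thus $V$ is a Lie supersubalgebra of $\mathsf{Hom}_K(K[G],A)$.

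For the second assertion, the natural $K$-linear isomorphism
\[\mathsf{Lie}(G)\otimes A\;\overset{\simeq}{\longrightarrow}\;\mathsf{Hom}_K(\mathfrak{m}/\mathfrak{m}^2,A),\qquad v\otimes a\mapsto\bigl(f\mapsto(-1)^{|a||f|}v(f)\,a\bigr),\]
already recorded in the preceding discussion, has to be promoted to a morphism of Lie superalgebras, where the bracket on $\mathsf{Lie}(G)\otimes A$ is $[x_1\otimes a_1,x_2\otimes a_2]=(-1)^{|a_1||x_2|}[x_1,x_2]\otimes a_1a_2$. Setting $u_i=x_i\otimes a_i$ and evaluating $(u_1\star u_2)(g)$ on homogeneous $g\in\mathfrak{m}$ via the same Sweedler decomposition, the terms involving $u_i(1)=0$ drop out, leaving a sum over the factors of $r_g$. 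Since $x_i$ takes values in $K$ and is $\mathbb{Z}_2$-homogeneous of degree $|x_i|$, only the summands with $|g'_j|=|x_1|$ and $|g''_j|=|x_2|$ contribute. A direct rearrangement yields
\[(u_1\star u_2)(g)=(-1)^{|a_2||x_1|+|a_1||x_1|+|a_2||x_2|}(x_1\star x_2)(g)\,a_1a_2,\]
and the analogous formula, with indices swapped, for $u_2\star u_1$. Substituting these into $[u_1,u_2]=u_1\star u_2-(-1)^{|u_1||u_2|}u_2\star u_1$ and comparing the resulting sign with the one coming from the prescribed bracket, translated through the isomorphism (where one uses that a nonzero value forces $|g|=|x_1|+|x_2|$), one finds that the two sides agree exactly.

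The main obstacle is purely the Koszul-sign bookkeeping: three different sources of signs (the convolution formula, the supercommutativity of $A$, and the tensor identification) must be balanced against each other. Once the parity constraints ``$x_i(g'_j)\neq 0$ implies $|g'_j|=|x_i|$'' are exploited to replace $|g'_j|,|g''_j|$ by $|x_1|,|x_2|$ in the exponents, the two sides of the desired bracket identity match termwise.
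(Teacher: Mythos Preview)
Your proposal is correct and carries out precisely the ``straightforward calculations'' that the paper's own proof merely invokes without detail. The decomposition $\Delta_G(g)=g\otimes 1+1\otimes g+r_g$ for $g\in\mathfrak{m}$, the term-by-term analysis showing $[u,v]$ vanishes on $\mathfrak{m}^2$, and the Koszul-sign verification for the bracket on $\mathsf{Lie}(G)\otimes A$ are exactly the computations one has to do; the paper simply omits them.
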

\begin{proof}
The proof follows from straightforward calculations.
\end{proof}

For the later use, consider the following operation.
$$(\mathsf{Ad}(g)u)(f)=\sum (-1)^{|u||f_1|}g(f_1)u(f_2)g(s_G(f_3))$$
for $u\in\mathsf{Hom}_K(\mathfrak{m}/\mathfrak{m}^2, A)$,
$g\in G(A)$ and $f\in\mathfrak{m}.$ 
\begin{lemma}\label{AdcommuteswithLieoperation}
The adjoint action of $G$ on $\mathsf{Lie}(G)_a\simeq {\bf Lie}(G)$ commutes with the above operation. 
\end{lemma}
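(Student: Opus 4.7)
The plan is to unravel the isomorphism $\mathsf{Lie}(G)_a \simeq {\bf Lie}(G)$ explicitly and verify, by a direct convolution computation in $\mathsf{Hom}_K(K[G], A[\epsilon_0,\epsilon_1])$, that the conjugation action on ${\bf Lie}(G)$ translates into the displayed formula for $\mathsf{Ad}(g)u$ on $\mathsf{Hom}_K(\mathfrak{m}/\mathfrak{m}^2, A)$.

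I will fix $A\in\mathsf{SAlg}_K$, $g\in G(A)$, and decompose $u = u_0 + u_1 \in \mathsf{Hom}_K(\mathfrak{m}/\mathfrak{m}^2, A)$ into homogeneous components. First I would record the three players inside $\mathsf{Hom}_K(K[G], A[\epsilon_0,\epsilon_1])$: the element $\tilde{g} := G(i_A)(g) = i_A\circ g$ satisfies $\tilde{g}(f) = g(f) \in A$; its convolution inverse in $G(A[\epsilon_0,\epsilon_1])$ is $\tilde{g}^{-1}(f) = g(s_G(f))$ by the antipode axiom; and the image of $u$ under the stated isomorphism is the algebra map $x = \epsilon_G + \epsilon_0 u_0 + \epsilon_1 u_1$. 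All three of these maps are parity-even, so the Koszul signs in the convolution formula vanish and I get
$$(\tilde{g}\star x\star \tilde{g}^{-1})(f) \;=\; \sum g(f_1)\,x(f_2)\,g(s_G(f_3)).$$

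Next I would expand $x(f_2) = \epsilon_G(f_2) + \epsilon_0 u_0(f_2) + \epsilon_1 u_1(f_2)$ and treat the three summands separately. The $\epsilon_G$-contribution collapses via $\sum f_1 s_G(f_2) = \epsilon_G(f)\cdot 1$ (combined with coassociativity of $\Delta_G$) to $\epsilon_G(f)$. For each remaining term $\epsilon_i u_i(f_2)$ I would push $\epsilon_i$ past $g(f_1)\in A$ using the super-commutativity of $A[\epsilon_0,\epsilon_1]$; since $|\epsilon_i| = i = |u_i|$ and $|g(f_1)| = |f_1|$, this produces precisely the sign $(-1)^{i|f_1|} = (-1)^{|u_i||f_1|}$. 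Factoring $\epsilon_i$ out on the left, the $i$-th summand becomes $\epsilon_i (\mathsf{Ad}(g)u_i)(f)$ in the sense of the displayed formula, and assembling all three contributions yields
$$(\tilde{g}\star x\star \tilde{g}^{-1})(f) \;=\; \epsilon_G(f) + \epsilon_0(\mathsf{Ad}(g)u_0)(f) + \epsilon_1(\mathsf{Ad}(g)u_1)(f),$$
which is exactly the image of $\mathsf{Ad}(g)u = \mathsf{Ad}(g)u_0 + \mathsf{Ad}(g)u_1$ under the isomorphism, as required.

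I do not expect any substantive obstacle: the whole argument is sign-bookkeeping, and the single nontrivial input is the supercommutation rule $g(f_1)\epsilon_i = (-1)^{i|f_1|}\epsilon_i g(f_1)$ in the superalgebra $A[\epsilon_0,\epsilon_1]$, whose sign coincides by design with the $(-1)^{|u||f_1|}$ featured in the definition of the operation $\mathsf{Ad}(g)u$.
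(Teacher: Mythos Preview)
Your computation is correct, but you have misread what the lemma is asserting. The phrase ``the above operation'' does not refer to the formula for $\mathsf{Ad}(g)u$ displayed just before the lemma; it refers to the Lie bracket on $\mathsf{Hom}_K(\mathfrak{m}/\mathfrak{m}^2,A)\simeq\mathsf{Lie}(G)\otimes A$ introduced in the preceding Lemma (labeled \texttt{convolution}). This is confirmed by the lemma's internal label \texttt{AdcommuteswithLieoperation}, by the paper's own proof, and by the way the lemma is invoked later in Theorem~\ref{sandwich}, where the authors write ``By Lemma~\ref{AdcommuteswithLieoperation}, the map $\mathsf{Ad}:H\to GL(\mathfrak{H})$ preserves the Lie operation in $\mathfrak{H}_a$.'' In other words, the content of the lemma is the identity
\[
[\mathsf{Ad}(g)u,\ \mathsf{Ad}(g)v]\;=\;\mathsf{Ad}(g)[u,v]
\]
for $u,v\in\mathsf{Hom}_K(\mathfrak{m}/\mathfrak{m}^2,A)$ and $g\in G(A)$, which the paper verifies by a direct convolution computation expanding both sides over a sixfold iterated comultiplication and collapsing the middle $s_G(f_3)f_4$ via the antipode axiom.

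What you have proved instead is that the conjugation action of $G(A[\epsilon_0,\epsilon_1])$ on ${\bf Lie}(G)(A)$, transported through the isomorphism with $\mathsf{Lie}(G)_a(A)$, is given by the displayed formula for $\mathsf{Ad}(g)u$. That is a true and useful compatibility statement---essentially a justification for re-using the symbol $\mathsf{Ad}$---but it is not the assertion the paper makes here, and it does not by itself yield the fact needed downstream, namely that $\mathsf{Ad}(g)$ is a Lie superalgebra automorphism. To prove the lemma as intended you would need to carry out the bracket computation the paper performs.
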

\begin{proof}
For any $u, v\in\mathsf{Hom}_K(\mathfrak{m}/\mathfrak{m}^2, g\in G(A)$ and $f\in\mathfrak{m}$  we have
$$[\mathsf{Ad}(g)u, \mathsf{Ad}(g)v](f)=$$
$$\sum(-1)^{|v|(|f_1|+|f_2|+|f_3|)+|u||f_1|+|v||f_4|}g(f_1)u(f_2)g(s_G(f_3))g(f_4)v(f_5)g(s_G(f_6))-
$$
$$
\sum(-1)^{|u||v|+|u|(|f_1|+|f_2|+|f_3|)+|v||f_1|+|u||f_4|}g(f_1)v(f_2)g(s_G(f_3))g(f_4)u(f_5)g(s_G(f_6))=
$$
$$
\sum(-1)^{|v|(|f_1|+|f_2|+|f_3|)+|u||f_1|}g(f_1)u(f_2)g(\epsilon_G(f_3))v(f_4)g(s_G(f_5))-
$$
$$
\sum(-1)^{|u||v|+|u|(|f_1|+|f_2|+|f_3|)+|v||f_1|}g(f_1)v(f_2)g(\epsilon_G(f_3))u(f_4)g(s_G(f_5))=
$$
$$
\sum(-1)^{|v|(|f_1|+|f_2|)+|u||f_1|}g(f_1)u(f_2)v(f_3)g(s_G(f_4))-
$$
$$
\sum(-1)^{|u||v|+|u|(|f_1|+|f_2|)+|v||f_1|}g(f_1)v(f_2)u(f_3)g(s_G(f_4))=
(\mathsf{Ad}(g)[u, v])(f).
$$
\end{proof}
\section{Connected supergroups}

An affine superscheme $SSp \ A$ is called {\it connected} if the underlying topological space $|SSpec \ A|^e$ of
the affine superspace $SSpec \ A$ is connected. Since $|SSpec \ A|^e=|Spec \ A_0|^e$, $SSp \ A$ is connected if and only if $A_0$ has no nontrivial idempotents (cf. 5.5. of \cite{wat}).

Suppose that $A$ is finitely generated. Then $A_0$ is finitely generated and it contains the largest (finite-dimensional) separable subalgebra $\pi_0(A_0)$ (cf. 6.7 of \cite{wat}). We denote $\pi_0(A_0)$ also by $\pi_0(A)$. 
\begin{lemma}\label{defofseparab}
$\pi_0(A)$ is the largest separable subalgebra in $A$.
\end{lemma}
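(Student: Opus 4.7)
The plan is to reduce to the classical (non-super) case by showing that every separable $K$-subalgebra $B$ of $A$ is contained in $A_0$. Once this is established, the classical fact that $\pi_0(A_0)$ is the largest separable subalgebra of $A_0$ (6.7 of \cite{wat}) immediately yields $B\subseteq\pi_0(A_0)=\pi_0(A)$. Since $\pi_0(A_0)$ viewed inside $A$ is itself a separable subalgebra, this will finish the proof.

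The inclusion $B\subseteq A_0$ is established element-wise, using two features of the super structure. Given $x\in B$, decompose $x=x_0+x_1$ with $x_0\in A_0$ and $x_1\in A_1$. Because $x_0$ is even it commutes with $x_1$, and because $A$ is supercommutative with $x_1$ odd, $x_1^{2}=0$ (using $\mathrm{char}\,K\neq 2$). Since $B$ is separable over $K$, the element $x$ satisfies a separable polynomial $p(X)\in K[X]$---for instance the minimal polynomial of $x$ in $B$, which is a squarefree product of separable irreducibles. Thus $\gcd(p,p')=1$ in $K[X]$, and B\'ezout provides $u,v\in K[X]$ with $up+vp'=1$.

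Using $x_1^{2}=0$ together with $[x_0,x_1]=0$, the Taylor expansion of $p(x_0+x_1)$ truncates after the linear term:
$$0=p(x_0+x_1)=p(x_0)+x_1\,p'(x_0).$$
Separating even and odd parts yields $p(x_0)=0$ in $A_0$ and $x_1\,p'(x_0)=0$. Evaluating $up+vp'=1$ at $x_0$ and using $p(x_0)=0$ gives $p'(x_0)\,v(x_0)=1$, so $p'(x_0)$ is a unit in $K[x_0]\subseteq A_0$. Multiplying the relation $x_1\,p'(x_0)=0$ by $v(x_0)$ forces $x_1=0$, hence $x\in A_0$. This proves $B\subseteq A_0$ and completes the argument.

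I do not anticipate a serious obstacle. The proof is driven entirely by the super-specific identities $x_1^{2}=0$ and $[x_0,x_1]=0$ together with the separability of $p$; the separability is essential, for otherwise $p'(x_0)$ need not be invertible and $x_1$ could not be eliminated.
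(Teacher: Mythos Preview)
Your proof is correct. The route differs from the paper's: the paper base-changes to $\overline{K}$, where $B\otimes\overline{K}\cong\overline{K}^t$ is spanned by pairwise orthogonal idempotents, and then invokes Lemma~7.2 of \cite{zub} (idempotents in a supercommutative superalgebra are even) to conclude $B\otimes\overline{K}\subseteq A_0\otimes\overline{K}$, hence $B\subseteq A_0$. You work directly over $K$, element by element, using the separable minimal polynomial of $x\in B$ together with the Taylor/B\'ezout trick to kill the odd component $x_1$. Your argument is more self-contained---no base change, no external lemma---and in fact the paper's idempotent step is exactly your argument specialized to $p(X)=X^2-X$: writing $e=e_0+e_1$, one gets $(2e_0-1)e_1=0$ with $(2e_0-1)^2=1$, which is precisely the invertibility of $p'(x_0)$ in that case. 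The paper's route, on the other hand, makes the structural reason (a separable algebra is generated by idempotents after base change) more visible.
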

\begin{proof}
Let $B$ be a separable subalgebra of $A$. According to Theorem 6.2(2) of \cite{wat}, $B\otimes\overline{K}$ is a separable subalgebra
of $A\otimes\overline{K}$. Moreover, $B\otimes\overline{K}=\overline{K}^t$ is generated by pairwise orthogonal idempotents. Using Lemma 7.2 of \cite{zub} we infer
that $B\otimes\overline{K}\subseteq A_0\otimes\overline{K}$. Thus $B\subseteq A_0$.
\end{proof}
\begin{lemma}\label{propofpi}
If $A$ and $B$ are finitely generated commutative superalgebras, then $\pi_0(A\otimes B)=\pi_0(A)\otimes\pi_0(B)$. 
\end{lemma}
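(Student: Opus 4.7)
The plan is to reduce the statement to the analogous identity $\pi_0(R\otimes S) = \pi_0(R)\otimes \pi_0(S)$ for ordinary finitely generated commutative $K$-algebras (a classical fact; cf.\ Section~6 of \cite{wat}) by killing the ``odd radical'' of each factor.

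First I would record that if $A$ is a finitely generated supercommutative superalgebra and $\mathrm{char}\,K\neq 2$, then every odd element squares to zero, and a standard finite generation argument shows that $AA_1 = A_1 + A_1\cdot A_1$ is a nilpotent ideal of $A$. Hence $A^{\mathrm{red}} := A/AA_1 = A_0/(A_1\cdot A_1)$ is a finitely generated commutative purely even algebra, and the same holds for $B^{\mathrm{red}}$. By Lemma \ref{defofseparab} one has $\pi_0(A) = \pi_0(A_0)$, and the standard fact that separable subalgebras lift uniquely through nilpotent-ideal quotients in the commutative setting yields canonical isomorphisms $\pi_0(A) \cong \pi_0(A^{\mathrm{red}})$ and $\pi_0(B) \cong \pi_0(B^{\mathrm{red}})$.

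Next, I would perform the same reduction for $A\otimes B$. The canonical surjection $A\otimes B \twoheadrightarrow A^{\mathrm{red}}\otimes B^{\mathrm{red}}$ has kernel $\mathcal{J} = AA_1\otimes B + A\otimes BB_1$. Since both $AA_1$ and $BB_1$ are nilpotent and $A\otimes B$ is supercommutative, a high enough power of $\mathcal{J}$ is a sum of products each of which contains either a large power of $AA_1$ in the first factor or a large power of $BB_1$ in the second factor; hence $\mathcal{J}^N = 0$ for $N\gg 0$. Consequently $\mathcal{J}_0 := \mathcal{J}\cap (A\otimes B)_0$ is a nilpotent ideal of $(A\otimes B)_0$, and the induced surjection $(A\otimes B)_0 \twoheadrightarrow A^{\mathrm{red}}\otimes B^{\mathrm{red}}$ (noting that $A^{\mathrm{red}}\otimes B^{\mathrm{red}}$ is purely even) induces an isomorphism on $\pi_0$ by the same lifting principle.

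Combining these identifications with the classical commutative identity applied to $A^{\mathrm{red}}\otimes B^{\mathrm{red}}$, one obtains
$$\pi_0(A\otimes B) \cong \pi_0(A^{\mathrm{red}}\otimes B^{\mathrm{red}}) = \pi_0(A^{\mathrm{red}})\otimes \pi_0(B^{\mathrm{red}}) \cong \pi_0(A)\otimes \pi_0(B),$$
and a brief bookkeeping check confirms that the composite is induced by the natural embedding $\pi_0(A)\otimes \pi_0(B)\hookrightarrow (A\otimes B)_0$. The only substantive step is the nilpotency of $\mathcal{J}$; the rest is a routine application of Lemma \ref{defofseparab} and the classical theory of $\pi_0$ for finitely generated commutative algebras.
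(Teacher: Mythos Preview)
Your proof is correct and follows essentially the same route as the paper: kill the nilpotent ``odd'' ideal to reduce to the purely even quotients $A_0/A_1^2$ and $B_0/B_1^2$, then invoke the classical identity $\pi_0(R\otimes S)=\pi_0(R)\otimes\pi_0(S)$ from \cite{wat}. The paper works directly inside $(A\otimes B)_0$ with the nilpotent ideal $(A_1^2\otimes B_0 + A_0\otimes B_1^2)\oplus(A_1\otimes B_1)$, whereas you pass through the full superalgebra quotient by $\mathcal{J}=AA_1\otimes B + A\otimes BB_1$; but $\mathcal{J}\cap(A\otimes B)_0$ is exactly the paper's ideal, so the two arguments coincide.
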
 
\begin{proof}
Observe that $(A\otimes B)_0=(A_0\otimes B_0)\bigoplus (A_1\otimes B_1)$ and $(A_1^2\otimes B_0 +A_0\otimes B_1^2)\bigoplus (A_1\otimes B_1)$ is a nilpotent ideal in $(A\otimes B)_0$.
By Lemma 6.8 and Theorem 6.5 of \cite{wat}, $\pi_0((A\otimes B)_0)\simeq\pi_0(A_0/A_1^2\otimes B_0/B_1^2)\simeq \pi_0(A)\otimes\pi_0(B)$.
\end{proof}
Let $G$ be an algebraic supergroup. Using the same arguments as in 6.7 of \cite{wat} we derive that $B=\pi_0(K[G])$ is an (even) Hopf subalgebra 
of $K[G]$. There is a natural epimorphism of supergroups $G\to \pi_0 G=SSp \ B$. 
It is clear that $\pi_0 G$ is the largest etale factor-supergroup of $G$. In other words, any epimorphism $G\to H$, where
$H$ is an (even) etale supergroup, factors through $G\to\pi_0 G$. The kernel of $G\to \pi_0 G$ is called
a {\it connected component} of $G$ and is denoted by $G^0$. 

The algebra $B$ is isomorphic to $L_1\times\ldots\times L_t$, where each $L_i$ is a separable field extension of $K$.
Choose (pairwise orthogonal) idempotents $e_1, \ldots, e_t$ such that $L_i=Be_i$ for $1\leq i\leq t$. One can assume that $\epsilon(e_1)=1$ and $\epsilon(e_i)=0$ for $i > 1$. 
It is clear that $B^+ =B(1-e_1)$. Thus $I_{G^0}=K[G](1-e_1), K[G^0]\simeq K[G]e_1$ and $G^0=G_{e_1}$ is an open subfunctor of $G$ (see Remark 9.3 of \cite{zub}).

Summarizing, $G$ is connected if and only if $G_{ev}$ is connected if and only if $G=G^0$ (see Theorem 6.7 of \cite{wat}). 

Recall from \cite{zub} that a {\it pseudoconnected component} $G^{[0]}$ of $G$ is a normal supersubgroup defined by the Hopf superideal
$\bigcap_{n\geq 1} \mathfrak{m}^n$.  
\begin{lemma}\label{inclusion}
Let $G$ be an algebraic supergroup. Then
$G^{[0]}\leq G^0$.
\end{lemma}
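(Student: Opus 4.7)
The plan is to translate the inclusion $G^{[0]}\leq G^0$ into the opposite inclusion of the defining Hopf superideals, and then verify it by a one-line idempotency argument. Recall that a containment $H\leq G'$ of closed supersubgroups of $G$ is equivalent to $I_{G'}\subseteq I_H$. Here $I_{G^0}=K[G](1-e_1)$, where $e_1\in\pi_0(K[G])$ is the unique primitive idempotent with $\epsilon_G(e_1)=1$, and $I_{G^{[0]}}=\bigcap_{n\geq 1}\mathfrak{m}^n$. So I need to show that $K[G](1-e_1)\subseteq \bigcap_{n\geq 1}\mathfrak{m}^n$.

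Since $\bigcap_{n\geq 1}\mathfrak{m}^n$ is an ideal of $K[G]$, it suffices to prove that the single element $1-e_1$ lies in every power $\mathfrak{m}^n$. This is where the key (and only) observation enters: being the complement of an idempotent, $1-e_1$ is itself an idempotent of $K[G]$, and $\epsilon_G(1-e_1)=1-1=0$ places it in $\mathfrak{m}$. Then for any $n\geq 1$,
\[
1-e_1=(1-e_1)^n\in\mathfrak{m}^n,
\]
which gives exactly what is needed and completes the inclusion. There is no real obstacle here; the only point to keep straight is the order-reversing correspondence between closed supersubgroups and their Hopf superideals, and the fact — already recorded in the paragraph preceding the lemma — that $1-e_1$ is available as an idempotent of $K[G]$ lying in the augmentation superideal.
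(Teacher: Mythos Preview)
Your argument is correct. Both your proof and the paper's hinge on the same fact, namely that $1-e_1$ is an idempotent lying in $\mathfrak{m}$, but they package it differently. The paper invokes a characterization from \cite{zub} (Lemma 9.2 there) stating that $f\in I_{G^{[0]}}$ if and only if $(1-g)f=0$ for some $g\in\mathfrak{m}_0$, and then applies it with $g=1-e_1$: since $e_1 I_{G^0}=0$, every $f\in I_{G^0}$ is killed by $1-(1-e_1)$. Your route bypasses this external characterization entirely by observing directly that $1-e_1=(1-e_1)^n\in\mathfrak{m}^n$ for all $n$, hence $1-e_1\in\bigcap_n\mathfrak{m}^n=I_{G^{[0]}}$, and then using that $I_{G^{[0]}}$ is an ideal. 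This is more self-contained and arguably cleaner; the paper's version, on the other hand, illustrates the utility of the annihilator description of $I_{G^{[0]}}$, which is used again in the proof of Proposition~\ref{pseudconcriterion}.
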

\begin{proof}
By Lemma 9.2 of \cite{zub}, $f$ belongs to $I_{G^{[0]}}$ if and only if there exists $g\in\mathfrak{m}_0$ such that $(1-g)f=0$.
Since $e_1 I_{G^0} =0$ and $1-e_1\in\mathfrak{m}_0$, we obtain that $I_{G^0}\subseteq I_{G^{[0]}}$. 
\end{proof}
Lemma 9.6 of \cite{zub} states that if $char K=0$, then $G^0=G^{[0]}$.
The following Proposition answers Question 9.2 of \cite{zub} and shows that the same statement is valid also in the case $char K >0$. 
\begin{prop}\label{pseudconcriterion}
If $G$ is an algebraic supergroup $G$, then  $G^0=G^{[0]}$.
\end{prop}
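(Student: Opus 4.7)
The plan is to establish the reverse inclusion $G^{0}\leq G^{[0]}$; combined with Lemma \ref{inclusion} this yields equality.

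\emph{Reduction to the connected case.} Using the decomposition $K[G]=\bigoplus_{i=1}^{t} K[G]e_{i}$ (with $\epsilon(e_{1})=1$), for each $i\geq 2$ one has $e_{i}\in\mathfrak{m}$ and $e_{i}=e_{i}^{n}\in\mathfrak{m}^{n}$ for every $n$, so $I_{G^{0}}=K[G](1-e_{1})\subseteq\bigcap_{n}\mathfrak{m}^{n}$. Since $K[G^{0}]=K[G]e_{1}$, replacing $G$ by $G^{0}$ reduces the problem to showing that $\bigcap_{n}\mathfrak{m}^{n}=0$ when $G$ is connected.

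\emph{Krull--Nakayama step.} Assume $G$ is connected and put $I=\bigcap_{n}\mathfrak{m}^{n}$. Since $K[G]$ is a finitely generated module over the Noetherian ring $K[G]_{0}$ (each odd generator of $K[G]$ squares to zero), one checks that $I=\bigcap_{n}\mathfrak{m}_{0}^{n}K[G]$; Artin--Rees then gives $I=\mathfrak{m}_{0}I$, and Nakayama's lemma produces a single $g\in\mathfrak{m}_{0}$ with $(1-g)I=0$. It therefore suffices to show that $1-g$ is a non-zero-divisor in $K[G]$.

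\emph{Filtration argument.} Let $J=K[G]K[G]_{1}$; this is a nilpotent Hopf ideal (say $J^{N+1}=0$) with $K[G]/J=K[G_{ev}]$, and each subquotient $J^{k}/J^{k+1}$ is annihilated by $J$ and so carries a natural $K[G_{ev}]$-module structure. By the Harish--Chandra / Masuoka structure theorem for commutative Hopf superalgebras (cf.\ \cite{maszub}), $K[G]$ is free as a $K[G_{ev}]$-module; the associated graded $\bigoplus_{k}J^{k}/J^{k+1}$ is the exterior algebra over $K[G_{ev}]$ on the free module $J/J^{2}$, so every $J^{k}/J^{k+1}$ is a free $K[G_{ev}]$-module. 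Since $G$ is connected, $G_{ev}$ is connected, and the coordinate ring of a connected affine algebraic group is Cohen--Macaulay (by homogeneity under translations), with the nilradical as its unique associated prime; this nilradical lies in $\mathfrak{m}_{G_{ev}}$. As $\epsilon(1-\bar{g})=1$, the image $1-\bar{g}$ of $1-g$ in $K[G_{ev}]$ lies outside the nilradical and is therefore a non-zero-divisor in $K[G_{ev}]$, hence on every free $K[G_{ev}]$-module $J^{k}/J^{k+1}$.

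\emph{Conclusion.} Suppose $(1-g)r=0$ in $K[G]$. Reducing modulo $J$ gives $(1-\bar{g})\bar{r}=0$ in $K[G_{ev}]$, forcing $\bar{r}=0$ and $r\in J$; inductively, the image of $r$ in each $J^{k}/J^{k+1}$ is killed by $1-\bar{g}$ and vanishes, so $r\in J^{N+1}=0$. Thus $1-g$ is a non-zero-divisor, and from $(1-g)I=0$ we conclude $I=0$. The main obstacle is establishing freeness of each $J^{k}/J^{k+1}$ over $K[G_{ev}]$, which is the structural content supplied by the Hopf-superalgebra analysis of \cite{maszub}; combined with the classical Cohen--Macaulay property of connected algebraic groups, the rest is a descending induction along the finite nilpotent filtration.
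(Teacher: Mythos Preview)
Your argument is correct and rests on the same two pillars as the paper's proof: Masuoka's splitting $K[G]\simeq K[G_{ev}]\otimes\Lambda(W)$ (the paper cites Theorem~4.5 of \cite{mas1} rather than \cite{maszub}) and a classical fact about the coordinate ring of a connected algebraic group. The paper, however, is considerably more direct: once $A\simeq\overline{A}\otimes\Lambda(W)$ is in hand, it simply identifies $\mathfrak{m}$ with $\overline{\mathfrak{m}}\otimes\Lambda(W)+\overline{A}\otimes\Lambda(W)^{+}$ and reads off $\bigcap_n\mathfrak{m}^n=(\bigcap_n\overline{\mathfrak{m}}^n)\otimes\Lambda(W)$, then quotes Waterhouse (Exercise~6) for $\bigcap_n\overline{\mathfrak{m}}^n=0$. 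Your Artin--Rees/Nakayama step and descending filtration argument recover the same conclusion but by a longer path.

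One point to tighten: your justification that $1-\bar g$ is a non-zero-divisor in $K[G_{ev}]$ invokes Cohen--Macaulayness ``by homogeneity under translations'' together with irreducibility. Homogeneity only transports CM from one closed point to all others; it does not by itself establish CM at the identity. The fact that an affine algebraic group scheme over a field is a local complete intersection (hence CM) is true but is a nontrivial theorem (SGA3, Exp.~VII$_B$), and irreducibility of a connected group scheme also needs an argument (pass to the reduced scheme over $\overline{K}$). You can bypass all of this: if $(1-\bar g)a=0$ with $\bar g\in\overline{\mathfrak{m}}$, then $a=\bar g\,a=\bar g^{\,2}a=\cdots\in\bigcap_n\overline{\mathfrak{m}}^n=0$ by the very result the paper cites. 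That single line replaces your CM discussion and makes the two proofs essentially identical in content.

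A minor expository remark: the computation $e_i=e_i^n\in\mathfrak{m}^n$ in your ``reduction'' paragraph re-proves $I_{G^0}\subseteq I_{G^{[0]}}$, which is Lemma~\ref{inclusion}, not the reverse inclusion you announced; the genuine reduction to the connected case is the sentence that follows.
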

\begin{proof}
By Lemma \ref{inclusion}, one can assume that $G$ is connected.
By Theorem 4.5 of \cite{mas1} we have $A\simeq \overline{A}\otimes\Lambda(W)$, where $A=K[G], \overline{A}=K[G_{ev}]$ and
$W=A_1/A^+_0A_1$. 
It is clear that $\mathfrak{m}$ can be identified with $\overline{\mathfrak{m}}\otimes\Lambda(W) +\overline{A}\otimes\Lambda(W)^+$, where $\Lambda(W)^+ =\Lambda(W)W$. 
Using arguments from the proof of Lemma 9.1 of \cite{zub} we obtain 
$\bigcap_{n\geq 0}\mathfrak{m}^n\subseteq \bigcap_{n\geq 0}\overline{\mathfrak{m}}^n\otimes\Lambda(W)$. On the other hand,
each $f\in \bigcap_{n\geq 0}\overline{\mathfrak{m}}^n\otimes\Lambda(W)$ is annihilated by an element $(1-y)\otimes 1_{\Lambda(W)}$, where $y\in \overline{\mathfrak{m}}$. 
Therefore $I_{G^{[0]}}=I_{G_{ev}^{[0]}}\otimes\Lambda(W)$. Since $G_{ev}$ is connected, Exercise 6 from \cite{wat} implies $I_{G_{ev}^{[0]}}=0$. 
\end{proof}
\begin{rem}\label{incorrect}
We would like to clarify that the statement of Proposition 9.2 in \cite{zub} is valid only when $char K = 0$ and it does not hold when $char K>0$. 
For example, for any algebraic (super)group $G$, its $n$-th infinitesimal (super)subgroup $G_n$, where $n\geq 1$, satisfies $\mathsf{Lie}(G)=\mathsf{Lie}(G_n)$, but $G/G_n$ is not necessary finite.   
Additionally, in both statements of Lemma 9.6 one needs to assume that $char K=0$. This assumption was accidentaly lost in the final version of the article \cite{zub}.
\end{rem}

\section{Quotients}

Let $G$ be an algebraic supergroup and $H\leq G$. The sheafification of the $K$-functor $A\to (G/H)_{(n)}(A)=G(A)/H(A), A\in \mathsf{SAlg}_K,$ is called a \emph{sheaf quotient} and is denoted by
$G / H$.  The functor $(G/H)_{(n)}$ is called the {\it naive quotient}. It was proved in \cite{maszub} that a quotient sheaf $G / H$ is a Noetherian superscheme and the quotient morphism $\pi : G\to X$ is affine and faithfully flat. 

If $H\unlhd G$, then $G/ H\simeq SSp \ K[G]^H$ is an algebraic supergroup. Let $L$ be a supersubgroup of $G$ and $I_L$ be its
defining Hopf superideal.
A sheafification of the group subfunctor $A\to L(A)H(A)/H(A), A\in \mathsf{SAlg}_K,$ in $G/ H$, is denoted by $\pi(L)$.
It is a supersubgroup of $G/ H$ defined by the Hopf superideal $K[G]^H\bigcap I_L$ (cf. Theorem 6.1 of \cite{zub}). 
The supersubgroup $\pi^{-1}(\pi(L))$ is denoted by $LH$. As it was observed on p. 735 of \cite{zub}, $LH$ is a sheafification of
the group subfunctor $A\to L(A)H(A), A\in \mathsf{SAlg}_K$ and its defining Hopf superideal is $K[G](K[G]^H\bigcap I_L)$. 

Notice that if $L\bigcap H=1$, then $LH$ can be identified with the direct product $L\times H\leq G$, where
$(L\times H)(A)=L(A)\times H(A)\leq G(A), A\in\mathsf{SAlg}_K$. In fact, $L$ is identified with a subfunctor
of $(G/H)_{(n)}$. Since $L$ is affine, its sheafification is equal to itself (cf. \cite{zub, jan}).
\begin{lemma}\label{productofnormalandsome}
If  $char K=0$, then $\mathsf{Lie}(LH)=\mathsf{Lie}(L)+\mathsf{Lie}(H)$.
\end{lemma}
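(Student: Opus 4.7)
The inclusion $\mathsf{Lie}(L)+\mathsf{Lie}(H)\subseteq\mathsf{Lie}(LH)$ is immediate from $L,H\le LH$, so only the reverse inclusion requires work. My plan is to pass to the quotient $\pi:G\to G/H$ and exploit the identity $LH=\pi^{-1}(\pi(L))$ recalled just above the statement.

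The main ingredient from characteristic zero is that the faithfully flat morphism $\pi$ induces a short exact sequence of Lie superalgebras
\[
0\longrightarrow \mathsf{Lie}(H)\longrightarrow \mathsf{Lie}(G)\stackrel{d\pi}{\longrightarrow}\mathsf{Lie}(G/H)\longrightarrow 0,
\]
and that $\mathsf{Lie}(\pi(L))=d\pi(\mathsf{Lie}(L))$ for every supersubgroup $L\le G$. Both statements reduce via the isomorphism $K[G]\cong K[G_{ev}]\otimes\Lambda(W)$ (Theorem 4.5 of \cite{mas1}, already used in the proof of Proposition \ref{pseudconcriterion}) to the analogous classical assertions for the morphism $G_{ev}\to G_{ev}/H_{ev}$ of algebraic groups in characteristic zero, together with a direct check on the odd generators; the second statement additionally uses that the defining Hopf superideal of $\pi(L)$ is $K[G/H]\cap I_L$ by Theorem 6.1 of \cite{zub}, from which the equality on cotangent spaces at $e$ is immediate.

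Granting these inputs, the lemma reduces to a one-step diagram chase:
\[
\mathsf{Lie}(LH)=d\pi^{-1}\bigl(\mathsf{Lie}(\pi(L))\bigr)=d\pi^{-1}\bigl(d\pi(\mathsf{Lie}(L))\bigr)=\mathsf{Lie}(L)+\ker d\pi=\mathsf{Lie}(L)+\mathsf{Lie}(H).
\]
The main obstacle is precisely the characteristic-zero smoothness input above: that a faithfully flat quotient induces a surjection on Lie superalgebras with the expected kernel, and that the Lie superalgebra of a supergroup image equals the image on Lie superalgebras. A cleaner alternative avoids $\pi(L)$ entirely: one applies the same char-zero surjectivity to the multiplication morphism $L\times H\to LH$, which is faithfully flat since $LH$ is the sheafification of $A\mapsto L(A)H(A)$; its differential at $e$ sends $(x,y)\in\mathsf{Lie}(L)\oplus\mathsf{Lie}(H)$ to $x+y$ by a one-line dual-numbers computation, so surjectivity of the differential yields the conclusion at once.
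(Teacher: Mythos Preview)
Your main argument via the quotient $\pi:G\to G/H$ is essentially the paper's approach: the paper's proof simply cites Corollary~6.1 and Proposition~9.1 of \cite{zub}, which supply precisely the two inputs you isolate (the short exact sequence $0\to\mathsf{Lie}(H)\to\mathsf{Lie}(G)\to\mathsf{Lie}(G/H)\to 0$ in characteristic zero, and the compatibility $\mathsf{Lie}(\pi(L))=d\pi(\mathsf{Lie}(L))$), and the diagram chase you write out is the intended deduction. Your reduction of these inputs to the classical case via the superalgebra splitting $K[G]\cong K[G_{ev}]\otimes\Lambda(W)$ is a bit loose, since that isomorphism is not one of Hopf superalgebras; it is cleaner simply to invoke the cited results from \cite{zub} directly.

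One caution about your alternative: the multiplication map $L\times H\to LH$ is not a supergroup homomorphism, so the slogan ``surjective in char~0 $\Rightarrow$ surjective on Lie'' does not apply verbatim, and ``faithfully flat since $LH$ is the sheafification'' is not a justification on its own. One can rescue it by observing that $L\times H\to LH$ is the base change of the supergroup epimorphism $L\to\pi(L)$ along the faithfully flat quotient $LH\to LH/H\cong\pi(L)$, hence faithfully flat; but at that point you have essentially reconstructed the first argument.
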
 
\begin{proof}
The proof follows from Corollary 6.1 and Proposition 9.1 of \cite{zub}.
\end{proof}
\begin{lemma}\label{factorofconnect}
If $G$ is a connected algebraic supergroup and $H$ is a normal supersubgroup of $G$, then $G/H$ is connected. 
\end{lemma}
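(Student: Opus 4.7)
The plan is to apply the characterization of connectedness that was used throughout Section~3: the supergroup $G/H$ is connected if and only if the even part $K[G/H]_0$ of its coordinate superalgebra has no non-trivial idempotents. Since $H\unlhd G$, the identification $K[G/H]\simeq K[G]^H$ recalled at the beginning of this section lets us view $K[G/H]$ as a Hopf supersubalgebra of $K[G]$. The action of $H$ preserves the $\mathbb{Z}_2$-grading of $K[G]$, hence $(K[G]^H)_0 = K[G]^H\cap K[G]_0$, and so $K[G/H]_0$ embeds as a (unital) subalgebra into $K[G]_0$.

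Given this, the proof is immediate: an idempotent $e\in K[G/H]_0$ is in particular an idempotent of $K[G]_0$. By the hypothesis that $G$ is connected, the only idempotents of $K[G]_0$ are $0$ and $1$, so the same holds for $K[G/H]_0$, proving that $G/H$ is connected. A stylistic alternative worth mentioning: one could argue functorially via $\pi_0$, noting that the composite $G\to G/H\to \pi_0(G/H)$ lands in an étale supergroup and so factors through $\pi_0 G$; since $G=G^0$, the factor $\pi_0(G/H)$ is trivial, giving the same conclusion. There is no genuine obstacle here --- the only thing to verify is the grading-compatibility of the identification $K[G/H]=K[G]^H$, which is automatic.
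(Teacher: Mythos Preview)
Your proof is correct and follows essentially the same approach as the paper: both rely on the inclusion $K[G/H]\simeq K[G]^H\subseteq K[G]$ to pull the connectedness of $G/H$ back from that of $G$. The paper phrases this via the inclusion $\pi_0(K[G]^H)\subseteq\pi_0(K[G])=K$, while you use the equivalent idempotent criterion directly; your ``stylistic alternative'' via $\pi_0$ is in fact exactly the paper's one-line argument.
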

\begin{proof}
The statement follows from $\pi_0(K[G]^H)\subseteq\pi_0(K[G])=K$. 
\end{proof}
\begin{lemma}\label{charsubgr} 
Let $G$ be an algebraic supergroup. Then we have the following.
\begin{enumerate}
\item If $H$ is a supersubgroup of $G$, then $H^0\leq H\bigcap G^0$;
\item If $H\unlhd G$, then $H^0\unlhd G$;
\item $(G_{ev})^0=(G^0)_{ev}$. 
\end{enumerate}
\end{lemma}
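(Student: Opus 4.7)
My plan exploits, throughout, the universal property that $\pi_0$ realizes the maximal étale quotient of an algebraic supergroup, together with Lemma~\ref{propofpi}. For part (1), the composite $H \hookrightarrow G \twoheadrightarrow \pi_0 G$ maps $H$ into an étale supergroup, and so by the defining universal property of $\pi_0 H$ it factors as $H \twoheadrightarrow \pi_0 H \to \pi_0 G$; consequently $H^0 = \ker(H \to \pi_0 H)$ lies in $\ker(H \to \pi_0 G) = H \cap G^0$.

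For part (2), assume $H \unlhd G$ and consider the conjugation morphism $c\colon G \times H \to H$ as a morphism of affine superschemes. Applying $\pi_0$ and using Lemma~\ref{propofpi} to identify $\pi_0(G \times H) = \pi_0 G \times \pi_0 H$, I obtain $\pi_0(c)\colon \pi_0 G \times \pi_0 H \to \pi_0 H$, the induced conjugation action of $\pi_0 G$ on $\pi_0 H$. Conjugation in any group fixes the identity, so the restriction of $\pi_0(c)$ to $\pi_0 G \times \{1_{\pi_0 H}\}$ is constant at $1$. Next I observe that $H^0$ is itself connected: since $\epsilon$ restricts to a $K$-algebra map from the field factor $L_1 = K[H]e_1$ of $\pi_0(K[H])$ to $K$, necessarily $L_1 = K$, and an argument as in the proof above Lemma~\ref{inclusion} shows $\pi_0(K[H^0]) = K$, so the map $H^0 \to \pi_0 H$ is trivial. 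Functoriality of $\pi_0$ then forces the composite
\[G \times H^0 \hookrightarrow G \times H \xrightarrow{\;c\;} H \twoheadrightarrow \pi_0 H\]
to factor through $\pi_0 G \times \{1_{\pi_0 H}\} \xrightarrow{\pi_0(c)} \pi_0 H$, so it is constant at $1$. Hence $c(G \times H^0) \subseteq \ker(H \to \pi_0 H) = H^0$, which is exactly $H^0 \unlhd G$.

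For part (3), I identify $K[(G_{ev})^0]$ and $K[(G^0)_{ev}]$ as quotients of $K[G]$ by the same Hopf superideal. By the structure theorem of Masuoka cited in the proof of Proposition~\ref{pseudconcriterion}, $K[G] \cong K[G_{ev}] \otimes \Lambda(W)$ with $\dim W < \infty$, so the kernel $K[G]K[G]_1$ of $K[G] \twoheadrightarrow K[G_{ev}]$ meets $K[G]_0$ in a nilpotent ideal; consequently $\pi_0(K[G]) \to \pi_0(K[G_{ev}])$ is an isomorphism carrying the distinguished idempotent $e_1$ to its counterpart $\bar e_1$. A direct computation with $K[G^0] = K[G]e_1$ and $K[G_{ev}] = K[G]/K[G]K[G]_1$ then yields
\[K[(G_{ev})^0] \;=\; K[G]\big/\bigl(K[G](1-e_1) + K[G]K[G]_1\bigr) \;=\; K[(G^0)_{ev}].\]

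The main obstacle is part (2), where one must run the functorial descent along $\pi_0$ without assuming $G$ is connected, and deduce connectedness of $H^0$ over the base field $K$ from the fact that the field factor of $\pi_0(K[H])$ carrying the counit must equal $K$; parts (1) and (3) are then essentially formal consequences of universal properties and a kernel computation, respectively.
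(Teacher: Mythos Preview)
Your argument is correct. Parts (1) and (3) match the paper's proofs closely: for (1) the paper phrases it as ``$H/(H\cap G^0)$ embeds in the \'etale group $G/G^0$, hence is \'etale,'' which is the same universal-property reasoning you give; for (3) the paper likewise observes that the epimorphism $K[G]\to K[G_{ev}]$ restricts to an isomorphism $\pi_0(K[G])\overset{\sim}{\to}\pi_0(K[G_{ev}])$, from which the identification of Hopf superideals follows.

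The genuine difference is in part (2). The paper's proof is a one-liner: it invokes Proposition~\ref{pseudconcriterion} to identify $H^0=H^{[0]}$, and then cites the remark after Lemma~9.1 of \cite{zub} to the effect that the pseudoconnected component is automatically characteristic (the defining ideal $\bigcap_{n\ge1}\mathfrak m^n$ is manifestly stable under any Hopf superalgebra automorphism, hence under conjugation). Your route avoids both the pseudoconnected component and the external reference: you push the conjugation morphism $c$ through the functor $\pi_0$, use Lemma~\ref{propofpi} to split $\pi_0(G\times H)$, and then exploit that $H^0\to\pi_0 H$ is trivial to trap $c(G\times H^0)$ inside $H^0$. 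This is more elementary and self-contained, at the cost of spelling out the connectedness of $H^0$ (which the paper takes for granted). The paper's approach, by contrast, buys brevity and simultaneously shows that $H^0$ is characteristic in the stronger sense of being stable under all supergroup automorphisms of $H$, not just inner ones.

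One small remark: in your part (1) you appeal to the universal property of $\pi_0 H$ for an arbitrary morphism $H\to\pi_0 G$, whereas the paper states it only for epimorphisms. This is harmless---either factor through the image (a closed subgroup of an \'etale group is \'etale) or, more simply, just use functoriality of $\pi_0$ on superalgebra morphisms, which gives the commuting square directly.
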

\begin{proof}
$H/(H\bigcap G^0 )$ is isomorphic to a (super)subgroup of $G/G^0$ (see Theorem 6.1 of \cite{zub} and Lemma 3.3 of \cite{zub1}). According to Corollary 6.2 of \cite{wat}, 
$H/(H\bigcap G^0 )$ is an etale (super)group and the first statement follows.

Since $H^0=H^{[0]}$, the second statement follows by remark after Lemma 9.1 in \cite{zub}. 

The superalgebra epimorphism $A\to \overline{A}=A/AA_1=K[G_{ev}]$, restricted on $B=\pi_0(A)$, is an isomorphism onto $\pi_0(\overline{A})$. This implies the third statement.    
\end{proof}

\section{Some auxiliary properties of algebraic supergroups}

\begin{lemma}\label{normal}
Let $char K=0$ and $G$ be a supersubgroup of $H$. If both $G$ and $H$ are connected and $\mathfrak{G}=\mathsf{Lie}(G)$ is a superideal of $\mathfrak{H}=\mathsf{Lie}(H)$, 
then $G\unlhd H$.
\end{lemma}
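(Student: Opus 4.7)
The plan is to mimic the classical proof by constructing a normaliser-type closed supersubgroup $N = N_H(G) \leq H$, showing that $\mathsf{Lie}(N) = \mathsf{Lie}(H)$, and then invoking the characteristic zero rigidity $\mathsf{Dist}(H) = \mathsf{U}(\mathsf{Lie}(H))$ to conclude $N = H$.

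Step one is to realise $N$ as a closed supersubgroup of $H$. I would use the conjugation action of $H$ on itself, which restricts to an action on the closed supersubscheme $G \hookrightarrow H$; the normaliser $N(A)$ is the subset of $h \in H(A)$ whose inner automorphism of $H_A$ sends $G_A$ into $G_A$. Dually, this is the subset of $h$ whose induced action on $K[H]$ stabilises the Hopf superideal $I_G$. Since $G$ is algebraic, $I_G$ is finitely generated, and this condition cuts out a closed subfunctor whose defining ideal in $K[H]$ is Hopf, so $N$ is a closed supersubgroup of $H$ and $G \unlhd N$ by construction.

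Step two is to compute $\mathsf{Lie}(N)$. I would use the dual-number description $\mathsf{Lie}(H)_a \simeq \mathbf{Lie}(H)$ together with the conjugation formula $(\mathsf{Ad}(g)u)(f) = \sum (-1)^{|u||f_1|} g(f_1) u(f_2) g(s_G(f_3))$ recorded before Lemma \ref{AdcommuteswithLieoperation}. The point $\epsilon_H + \epsilon_{|x|} x \in H(A[\epsilon_0,\epsilon_1])$ lies in $N$ precisely when conjugation by it preserves $\mathfrak{G}_a$, and Lemma \ref{AdcommuteswithLieoperation} ensures that the infinitesimal effect of this conjugation is the adjoint action $\mathrm{ad}(x)$ on $\mathfrak{G}$. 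Hence $\mathsf{Lie}(N) = \{x \in \mathfrak{H} : [x, \mathfrak{G}] \subseteq \mathfrak{G}\}$, which by the hypothesis that $\mathfrak{G}$ is a superideal of $\mathfrak{H}$ equals $\mathfrak{H}$.

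Step three closes the argument. Since $\mathrm{char}\,K = 0$, we have $\mathsf{Dist}(N) = \mathsf{U}(\mathsf{Lie}(N)) = \mathsf{U}(\mathfrak{H}) = \mathsf{Dist}(H)$. Unwinding the definition $\mathsf{Dist}_l(H) = (K[H]/\mathfrak{m}^{l+1})^*$, the equality $\mathsf{Dist}_l(N) = \mathsf{Dist}_l(H)$ for every $l$ forces the defining ideal $I_N \subseteq K[H]$ to be contained in $\mathfrak{m}^{l+1}$ for all $l$, hence in $\bigcap_{n \geq 0} \mathfrak{m}^n = I_{H^{[0]}}$. By Proposition \ref{pseudconcriterion} and the connectedness of $H$, $H = H^0 = H^{[0]}$, so $I_{H^{[0]}} = 0$ and therefore $I_N = 0$, giving $N = H$ and $G \unlhd H$. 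The main obstacle is the rigorous construction of $N_H(G)$ as a closed supersubgroup together with the Lie-algebra identification of step two; once those are secured, the collapse in step three is automatic from the tools already developed in the paper.
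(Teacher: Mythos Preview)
Your normaliser strategy is different from the paper's two-line argument, which simply invokes Lemma~9.5 of \cite{zub} (that $V^G=V^{\mathfrak{G}}$ for connected $G$ in characteristic zero) together with the normality criterion of Lemma~9.7 of \cite{zub}. The paper's route is pure comodule/invariant theory; yours is the classical ``build $N_H(G)$, compute its Lie algebra, collapse'' argument.

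There is, however, a genuine gap in your Step~2. You assert that the dual-number point $\epsilon_H+\epsilon_{|x|}x$ lies in $N$ precisely when conjugation by it preserves $\mathfrak{G}_a$, but that is not the definition of $N$: membership in $N$ means conjugation preserves the closed supersubscheme $G$, i.e.\ the derivation induced by $x$ on $K[H]$ sends $I_G$ into $I_G$. Lemma~\ref{AdcommuteswithLieoperation} tells you only that $\mathsf{Ad}$ respects the bracket on $\mathfrak{H}_a$; it does not by itself show that stabilising $\mathfrak{G}$ under $\mathrm{ad}$ is equivalent to stabilising $I_G$. The missing link is that in characteristic zero a connected supersubgroup is uniquely determined by its Lie superalgebra (equivalently, by Proposition~\ref{pseudconcriterion}, $I_G$ is recovered from $\mathfrak{G}$ via $\mathsf{Dist}(G)=\mathsf{U}(\mathfrak{G})$), so that $\mathsf{Ad}(h)\mathfrak{G}=\mathfrak{G}$ forces $hGh^{-1}=G$. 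Once you insert this, the argument works---but notice that pairing it with the fact that the superideal $\mathfrak{G}$ is automatically an $H$-submodule of $\mathfrak{H}$ under $\mathsf{Ad}$ (Lemma~9.4 of \cite{zub}, since $H$ is connected and $\mathrm{char}\,K=0$) already yields $N=H$ outright, making your Step~3 collapse redundant. In other words, the repair of Step~2 essentially rediscovers the invariant-theoretic principle the paper is quoting.
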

\begin{proof}
According to Lemma 9.5 of \cite{zub}, $V^G=V^{\mathfrak{G}}$ for any $G$-supermodule $V$. 
The statement then follows from Lemma 9.7 of \cite{zub}. 
\end{proof}
\begin{lemma}\label{whenLiesuperalgisabelian}
Assume a supergroup $G$ is connected. Then $G$ is abelian if and only if $\mathsf{Dist}(G)$ is a supercommutative superalgebra. 
If $char K=0$, then $G$ is abelian if and only if its Lie superalgebra is abelian.
\end{lemma}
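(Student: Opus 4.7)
The plan is to translate the abelian property of $G$ into super-cocommutativity of $K[G]$, and translate supercommutativity of $\mathsf{Dist}(G)$ into the same condition via the canonical pairing; connectedness then bridges the two. First I would recall that $G$ is abelian if and only if the multiplication $\mu:G\times G\to G$ coincides with $\mu\circ\sigma$, where $\sigma$ is the supersymmetry; dualizing, this says $\Delta_G=t\circ\Delta_G$, i.e.\ $K[G]$ is super-cocommutative. For a homogeneous $f\in K[G]$ set $\theta(f)=\Delta_G(f)-t(\Delta_G(f))$. A short sign bookkeeping with the pairing $(\phi\otimes\psi)(a\otimes b)=(-1)^{|\psi||a|}\phi(a)\psi(b)$ gives
$$(\phi\otimes\psi)(\theta(f))=\phi\star\psi(f)-(-1)^{|\phi||\psi|}\psi\star\phi(f)$$
for homogeneous $\phi,\psi\in\mathsf{Dist}(G)$. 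Thus $\mathsf{Dist}(G)$ is supercommutative if and only if every $\theta(f)$ is annihilated by $\mathsf{Dist}(G)\otimes\mathsf{Dist}(G)$. The implication $G$ abelian $\Rightarrow\mathsf{Dist}(G)$ supercommutative is then immediate and uses no connectedness.

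For the converse, I would use connectedness to ensure that $\mathsf{Dist}(G)\otimes\mathsf{Dist}(G)$ separates $K[G]\otimes K[G]$. By Lemma \ref{propofpi}, $\pi_0(K[G\times G])=\pi_0(K[G])\otimes\pi_0(K[G])=K$, so $G\times G$ is connected; Proposition \ref{pseudconcriterion} applied to $G\times G$ then yields $\bigcap_{n\geq 1}\mathfrak{n}^n=0$, where $\mathfrak{n}=K[G\times G]^+$. Since each $K[G\times G]/\mathfrak{n}^{l+1}$ is finite-dimensional, any non-zero element of $K[G\times G]$ is detected by some element of $\mathsf{Dist}(G\times G)=\bigcup_l(K[G\times G]/\mathfrak{n}^{l+1})^*$. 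Under the standard identification $\mathsf{Dist}(G\times G)\cong\mathsf{Dist}(G)\otimes\mathsf{Dist}(G)$ (cf.\ \cite{jan}), this separating family consists precisely of the pairings $\phi\otimes\psi$. Hence the annihilation of $\theta(f)$ forces $\theta=0$, so $K[G]$ is super-cocommutative and $G$ is abelian.

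For the characteristic zero statement, $\mathsf{Dist}(G)=\mathsf{U}(\mathsf{Lie}(G))$, and the Poincar\'e--Birkhoff--Witt theorem for Lie superalgebras gives that $\mathsf{U}(\mathfrak{g})$ is supercommutative if and only if the super-bracket on $\mathfrak{g}$ is identically zero, i.e.\ $\mathfrak{g}$ is abelian. Combined with the first part, this yields the stated equivalence.

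The main delicate point is the separation step: we need $\mathsf{Dist}(G)\otimes\mathsf{Dist}(G)$ to detect all of $K[G]\otimes K[G]$. This is precisely where connectedness is indispensable: a finite non-abelian \'etale supergroup would violate the converse, its distribution algebra being just $K$. Proposition \ref{pseudconcriterion}, applied to $G\times G$, supplies the required vanishing of the intersection of powers of the augmentation ideal of $K[G\times G]$, and the rest of the argument is purely formal manipulation with the convolution pairing.
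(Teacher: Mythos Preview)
Your proof is correct and follows essentially the same route as the paper's: both reduce abelian-ness to super-cocommutativity of $K[G]$, then use that $\mathsf{Dist}(G)^{\otimes 2}$ separates $K[G]^{\otimes 2}$ via the vanishing $\bigcap_{t\geq 0}\mathfrak{m}^t=0$ guaranteed by connectedness (the paper appeals to this for $G$ directly rather than for $G\times G$, but the content is the same). For the characteristic zero statement the paper simply cites Lemma~3.1 of \cite{zub}, whereas you unpack the argument via $\mathsf{Dist}(G)=\mathsf{U}(\mathsf{Lie}(G))$ and PBW, which is equivalent.
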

\begin{proof}
The supergroup $G$ is commutative if and only if $K[G]$ is supercocommutative, that is 
\[\Delta(f)=\sum f_1\otimes f_2=\sum(-1)^{|f_1||f_2|}f_2\otimes f_1\] for every $f\in K[G]$.
Since $\bigcap_{t\geq 0}\mathfrak{m}^t =0$, $K[G]$ is supercocommutative if and only if any element
$\phi\otimes\psi\in\mathsf{Dist}(G)^{\otimes 2}$ vanishes on $\sum f_1\otimes f_2-\sum(-1)^{|f_1||f_2|}f_2\otimes f_1$.
This proves the first statement. 
The second statement follows from Lemma 3.1 of \cite{zub}.
\end{proof}
\begin{lemma}\label{centralsubgroup}
Let $char K=0$, $G$ be a connected algebraic supergroup and $H$ be its connected supersubgroup such that $\mathsf{Lie}(H)$ is central in
$\mathsf{Lie}(G)$. Then $H\leq Z(G)^0$.
\end{lemma}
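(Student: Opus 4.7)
The plan is as follows. Since $\mathsf{Lie}(H)$ is central in $\mathsf{Lie}(G)$, it is, in particular, a superideal; Lemma \ref{normal} therefore yields $H\unlhd G$. Consequently, conjugation gives a well-defined action of $G$ on $H$ by supergroup automorphisms, and hence a left $G$-action on $V=K[H]$ by Hopf superalgebra automorphisms. To conclude $H\leq Z(G)^0$ it suffices to show $H\leq Z(G)$, for then the connectedness of $H$ together with Lemma \ref{charsubgr}(1) yields $H=H^0\leq Z(G)^0$. The inclusion $H\leq Z(G)$, in turn, is equivalent to the triviality of the conjugation $G$-action on $K[H]$, i.e.\ to $V^G=V$.

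The next step is to apply Lemma 9.5 of \cite{zub}, already invoked in the proof of Lemma \ref{normal}: one has $V^G=V^{\mathsf{Lie}(G)}$ for any $G$-supermodule $V$. It therefore suffices to prove that the induced $\mathsf{Lie}(G)$-action on $K[H]$, by super-derivations, is zero. I would pass to the dual side via the evaluation pairing $\mathsf{Dist}(H)\times K[H]\to K$: the $\mathsf{Lie}(G)$-action on $K[H]$ is adjoint to the $\mathsf{Lie}(G)$-action on $\mathsf{Dist}(H)\subseteq\mathsf{Dist}(G)$ coming from the adjoint action $\mathsf{Ad}$, the stability of which is guaranteed by $H\unlhd G$. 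This latter action is the inner super-derivation $\phi\mapsto [v,\phi]$. On the generating subspace $\mathsf{Lie}(H)\subseteq\mathsf{Dist}(H)$ it reduces to the super-bracket in $\mathsf{Lie}(G)$, which vanishes by the centrality hypothesis; and since $\mathrm{char}\,K=0$ gives $\mathsf{Dist}(H)=\mathsf{U}(\mathsf{Lie}(H))$, any super-derivation of $\mathsf{Dist}(H)$ that vanishes on $\mathsf{Lie}(H)$ must vanish on all of $\mathsf{Dist}(H)$.

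To transfer back to $K[H]$, I would use that the pairing is nondegenerate: this follows from $\bigcap_{n\geq 0}\mathfrak{m}^n=0$ in $K[H]$, which in turn holds because $H$ connected and $\mathrm{char}\,K=0$ imply $H=H^0=H^{[0]}$ by Proposition \ref{pseudconcriterion}. Thus triviality on $\mathsf{Dist}(H)$ dualizes to triviality on $K[H]$, completing the argument.

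The main obstacle is the compatibility claim in the second paragraph: verifying, with the correct $\mathbb{Z}_2$-signs, that the $\mathsf{Lie}(G)$-action on $K[H]$ induced by conjugation is adjoint under the evaluation pairing to the inner super-derivation $[v,-]$ on $\mathsf{Dist}(H)$. Lemma \ref{AdcommuteswithLieoperation} should be the relevant computational input. The remaining pieces are formal consequences of earlier results in the paper (Lemma \ref{normal}, Lemma \ref{charsubgr}(1), Proposition \ref{pseudconcriterion}) or of the standard fact that in characteristic zero a super-derivation of $\mathsf{U}(\mathfrak{L})$ is determined by its restriction to $\mathfrak{L}$.
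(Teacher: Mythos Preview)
Your approach is correct, but it is organised differently from the paper's and adds an unnecessary preliminary step. The paper does \emph{not} first prove $H\unlhd G$ and then study the conjugation action of $G$ on $K[H]$; instead it works directly with the conjugation action $\nu$ of $G$ on itself and observes that $H\leq Z(G)$ is exactly the condition $\nu(f)-f\otimes 1\in I_H\otimes K[G]$ for all $f\in K[G]$, i.e.\ that the acting factor restricted to $H$ gives the trivial action. Using connectedness of both $G$ and $H$ (so that $\mathsf{Dist}(G)=\mathsf{U}(\mathfrak{G})$ and $\mathsf{Dist}(H)=\mathsf{U}(\mathfrak{H})$ separate points of $K[G]$ and $K[G]/I_H$), this dualizes in one stroke to the Hopf-algebraic centrality condition $\sum(-1)^{|\phi_1||\psi|}s_{\mathsf{U}(\mathfrak{G})}(\phi_1)\psi\phi_2=\epsilon(\phi)\psi$, which is immediate from $\mathsf{U}(\mathfrak{H})\subseteq Z(\mathsf{U}(\mathfrak{G}))$. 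So the paper's argument and yours rest on the same algebraic core---centrality of $\mathfrak{H}$ in $\mathfrak{G}$ propagates to the enveloping algebras---but the paper reaches it with less scaffolding: no appeal to Lemma~\ref{normal}, no separate invocation of $V^G=V^{\mathfrak{G}}$, and no need to verify the adjointness of the two actions under the evaluation pairing (your acknowledged ``main obstacle''), because the dualization is done symmetrically in both tensor factors at once. Your route has the mild advantage of making the role of $V^G=V^{\mathfrak G}$ explicit, but the paper's is shorter and avoids the normality detour entirely.
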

\begin{proof}
Denote $\mathfrak{G}=\mathsf{Lie}(G)$ and $\mathfrak{H}=\mathsf{Lie}(H)$.
The conjugation action $G\times G\to G$ is defined by a superalgebra morphism 
$$\nu : f\mapsto \sum (-1)^{|f_1||f_2|} f_2\otimes f_1 s_G(f_3)$$ for $f\in K[G].$
Since $G$ and $H$ are connected, the condition $\nu(f)-f\otimes 1\in I_H\otimes K[G]$ is equivalent to
\[\sum (-1)^{|\phi_1||\psi|}s_{\mathsf{U}(\mathfrak{G})}(\phi_1)\psi\phi_2=\epsilon_{\mathsf{U}(\mathfrak{G})}(\phi)\psi\]
for $\phi\in\mathsf{U}(\mathfrak{G})$ and $\psi\in \mathsf{U}(\mathfrak{H}).$
Since $\mathsf{U}(\mathfrak{H})\subseteq Z(\mathsf{U}(\mathfrak{G}))$, the last condition is satisfied automatically. Thus $H\leq Z(G)^0$.
\end{proof}

Let $G$ be an algebraic supergroup. A {\it commutator supersubgroup} or {\it commutant} of $G$ is the smallest closed supersubgroup of $G$, that is denoted by $G'$, such that
$G'(A)$ contains all products
of commutators \[[x_1, y_1]\ldots [x_n, y_n],\]
where $x_i, y_i\in G(A), A\in \mathsf{SAlg}_K$ and $n\geq 1$.
Inductively, define a solvable series of $G$ as $G^{(1)}=G', G^{(k+1)}=(G^{(k)})'$.
Then all supersubgroups $G^{(k)}$ are normal, and if $G$ is connected, then they are also connected (cf. \cite{zubul}).   
A supergroup $G$ is called {\it solvable} if $G^{(s)}=1$ for some positive integer $s$.

An algebraic supergroup $G$ is called {\it unipotent} if every simple $G$-supermodule is trivial (cf. \cite{zub1, mas, zubul}).  
The largest normal unipotent supersubgroup of $G$ is called the {\it unipotent radical} of $G$ (see \cite{mas, zubul}) and is denoted by $G_u$.
The supergroup $G$ is called {\it reductive} if $G_u =1$. 

The following proposition was proved in \cite{zubul}.
\begin{prop}\label{structureofabelian}
Suppose that $K$ is algebraically closed and $char K=0$. If $G$ is abelian, then $G\simeq G_s\times G_u$, where
$G_s$ is diagonalizable and $G_u\simeq G_a^k\times (G_a^-)^t$. If $G$ is connected, then $G_s$ is a torus.
\end{prop}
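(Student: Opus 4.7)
The plan is to reduce the statement to the classical classification of abelian algebraic groups by means of the Harish-Chandra superpair correspondence, which is available because $\mathrm{char}\, K=0$.

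First I would observe that since $G$ is abelian, the conjugation action of $G$ on itself is trivial, so the adjoint action $\mathsf{Ad}:G\to GL(\mathsf{Lie}(G))$ is trivial as well; in particular $G_{ev}$ acts trivially on $\mathsf{Lie}(G)_1$. The connected component $G^0$ is also abelian, so by Lemma \ref{whenLiesuperalgisabelian} its Lie superalgebra $\mathfrak{g}=\mathsf{Lie}(G^0)=\mathsf{Lie}(G)$ is abelian; in particular $[\mathfrak{g}_1,\mathfrak{g}_1]=0$ and $[\mathfrak{g}_0,\mathfrak{g}_1]=0$.

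Next I would invoke the equivalence between the category of algebraic supergroups in characteristic zero and the category of Harish-Chandra superpairs (cf.\ \cite{carfior, mas, vish}), under which $G$ corresponds to $(G_{ev},\mathfrak{g})$. The observations above mean that this pair decomposes as the direct product of $(G_{ev},\mathfrak{g}_0)$ and the purely odd pair $(1,\mathfrak{g}_1)$ (trivial action, trivial bracket) in the category of superpairs. Pulling back through the equivalence yields $G\simeq G_{ev}\times H$, where $H$ is the purely odd abelian supergroup associated to $(1,\mathfrak{g}_1)$. A direct inspection of the reconstruction functor identifies $H$ with $(G_a^-)^t$ for $t=\dim\mathfrak{g}_1$: its coordinate superalgebra is the exterior algebra $\Lambda(\mathfrak{g}_1^{\,*})$ with each chosen basis vector primitive.

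Finally I would apply the classical Chevalley decomposition to the abelian algebraic group $G_{ev}$ over the algebraically closed field $K$: $G_{ev}\simeq D\times U$ with $D$ diagonalizable and $U$ unipotent. Since $\mathrm{char}\, K=0$, every unipotent algebraic group is connected, and a connected abelian unipotent group is isomorphic to $G_a^k$ for some $k\ge 0$. Setting $G_s:=D$ and $G_u:=G_a^k\times(G_a^-)^t$ gives the required decomposition. If $G$ is connected, then $G_{ev}$ is connected by Lemma \ref{charsubgr}(3), whence the diagonalizable group $D$ is a torus.

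The main technical obstacle will be the direct-product splitting of the Harish-Chandra superpair in the second step: I need to verify that when both the $G_{ev}$-action on $\mathfrak{g}_1$ and the bracket on $\mathfrak{g}_1\times\mathfrak{g}_1$ vanish, the reconstruction functor sends the pair into a genuine direct product of supergroups whose odd factor is $(G_a^-)^t$. This is the one place where the characteristic-zero assumption is genuinely used, and it amounts to tracing through the explicit form of the reconstruction functor to see that the Hopf superalgebra structure on $\Lambda(\mathfrak{g}_1^{\,*})$ is the primitive one.
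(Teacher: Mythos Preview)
Your argument is sound. Note, however, that the paper does not actually supply its own proof of this proposition: it merely records the statement and refers to \cite{zubul}. So there is no ``paper's proof'' to compare against here.

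That said, your route via Harish-Chandra superpairs is a clean, self-contained alternative. The key steps---triviality of $\mathsf{Ad}$ for abelian $G$, abelianness of $\mathfrak{g}$ via Lemma~\ref{whenLiesuperalgisabelian}, and the resulting product decomposition $(G_{ev},\mathfrak{g})\cong(G_{ev},\mathfrak{g}_0)\times(1,\mathfrak{g}_1)$ in the category of superpairs---are all correct. The identification of the supergroup attached to $(1,\mathfrak{g}_1)$ with $(G_a^-)^t$ is immediate from the explicit reconstruction (the coordinate superalgebra is $\Lambda(\mathfrak{g}_1^{\,*})$ with primitive generators), so the ``main technical obstacle'' you flag is in fact routine. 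One small remark: the equivalence you invoke must cover not-necessarily-connected supergroups, since $G$ is not assumed connected; this is indeed available in \cite{mas}, whereas the version quoted later in the paper (in the proof of Proposition~\ref{description}) is stated only for connected supergroups. Your citation of \cite{mas} is therefore the right one to lean on. The final reduction to the classical decomposition $G_{ev}\simeq D\times G_a^k$ and the connectedness argument via Lemma~\ref{charsubgr}(3) are both fine.
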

If $char K=0$, then $G$ is said to be {\it odd} if $\mathsf{Lie}(G)=\mathsf{Lie}(G)_1$. This is equivalent to $G\simeq (G_a^-)^k$ for some $k\geq 1$. 
The supergroup $G$ is called {\it triangulizable} if $G=G_u\rtimes T$, where $T$ is a torus. It is clear that every triangulizable $G$ is solvable.

\section{Actions}

Let $G$ and $X$ be affine supergroups and $G$ acts on $X$ via supergroup automorphisms.
In other words, we have a commutative diagram 
$$\begin{array}{ccccc}
X\times X\times G & \stackrel{m\times id_G}{\to} & X\times G & \stackrel{\rho}{\to} & X \\
\downarrow & & & & \uparrow\\
(X\times G)\times (X\times G) & & \stackrel{\rho\times\rho }{\to} & & X\times X   
\end{array}.
$$
Here $m : X\times X\to X$ is a multiplication morphism, $\rho : X\times G\to X$ is the action morphism,
the left vertical arrow is defined as  $(x, y, g)\mapsto (x, g; y, g)$ for $x, y\in X(A), g\in G(A)$ and $A\in \mathsf{SAlg}_K,$
and the right vertical arrow is given by $m$. 
The dual diagram is
$$\begin{array}{ccccc}
K[X] & \stackrel{\rho^*}{\to} & K[X]\otimes K[G] & \stackrel{\Delta_X\otimes id_{K[G]}}{\to} &
K[X]^{\otimes 2}\otimes K[G] \\
\downarrow & & & & \uparrow \\
K[X]^{\otimes 2} & & \stackrel{{\rho^*}^{\otimes 2}}{\to} & & (K[X]\otimes K[G])^{\otimes 2}
\end{array}.
$$
Here the right vertical arrow is defined as 
\[f_1\otimes h_1\otimes f_2\otimes h_2\mapsto (-1)^{|h_1||f_2|}f_1\otimes f_2\otimes h_1 h_2\]
for $f_1, f_2\in K[X]$ and $h_1, h_2\in K[G]$.
We call this diagram a {\it principal diagram}.

Since the action preserves the unit of $X$, we have $\sum \epsilon_X(f_1)h_2=\epsilon_X(f)$ for every $f\in K[X]$, where
$\rho^*(f)=\sum f_1\otimes h_2$. 

Observe that a closed supersubscheme $Y$ of $X$ is $G$-stable if and only if
$\rho^*(I_Y)\subseteq I_Y\otimes K[G]$, that is, $I_Y$ is a supersubcomodule of $K[X]$. Besides, 
$G$ acts on $Y$ trivially if and only if $\rho^*(f)-f\otimes 1\in I_Y\otimes K[G]$ for every $f\in K[X]$. 
\begin{exam}\label{torusaction}
Let $G=G_m$ acts on $X$. Then $B=K[X]$ has a $\mathsf{Z}$-grading $B=\oplus_{i\in \mathsf{Z}} B_i$, where $B_i=\{f\in B|\rho^*(f)=f\otimes t^i\}$ and
$B_i B_j\subseteq B_{i+j}$ for $i, j\in\mathsf{Z}$. These conditions are equivalent to 
$$\Delta_X(B_i)\subseteq \oplus_{k} (B_{i-k}\otimes B_k)$$
for $i\in\mathsf{Z},$ and $\oplus_{i\neq 0}B_i\subseteq B^+$.
\end{exam}

\begin{exam}\label{evenunipaction}
Let $G=G_a$ acts on $X$ and  $char K=0$. Then for each $f\in B=K[X]$ we have 
$$\rho^*(f)=\sum_{i\geq 0} f\delta_i\otimes t^i,$$ 
where the linear maps $\delta_i : B\to B$ satisfy 
$$C^k_i \delta_i=\delta_k\delta_{i-k}$$
for $0\leq k\leq i$, $\delta_0=id_B$, 
and $f\delta_i\in B^+$ whenever $i>0$. 
Besides, 
$$(*) \ (fg)\delta_i=\sum_{0\leq k\leq i}(f\delta_k) (g\delta_{i-k}),$$ 
$$(**) \ \Delta_X(f\delta_i)=
\sum_{0\leq k\leq i}\sum f_1\delta_k\otimes f_2\delta_{i-k}$$
for $i\geq 0,$ where $\Delta_X(f)=\sum f_1\otimes f_2$. 

In particular,  
$\delta_i=\frac{1}{i!}\delta_1^i$ for $i\geq 1$, $\delta_1^0=\delta_0$ and 
$\delta_1$ is an even (right) derivation. 
Moreover, the equation $(*)$ is nothing but
a Leibniz's rule and it is satisfied whenever $\rho$ is a $G$-action.
The equation $(**)$ can be rewritten as 
$$\Delta_X(f\delta_1^i)=\sum\sum_{0\leq k\leq i} C^k_i(f_1\delta_1^k)\otimes(f_2\delta_1^{i-k})$$
and can be regarded as {\it co-Leibniz's rule}. This equation is satisfied whenever $\delta_1$ is an even
{\it right coderivation}, that is $\delta_1$ satisfies $(**)$.  
\end{exam}
\begin{exam}\label{oddunipaction}
Let $G=G_a^-$ acts on $X$ and $B=K[X]$. 
Then $\rho^*(f)=f\otimes 1 + f\delta\otimes t$ for a linear map $\delta : B\to B^+$. As in the previous example, one can easily check that
$\delta$ is an odd superderivation and supercoderivation of $B$, that is
$$(ab)\delta=a(b\delta) +(-1)^{|b|}(a\delta)b,$$ 
$$\Delta_G(b\delta)=\sum b_1\otimes b_2\delta +
(-1)^{|b_2|} b_1\delta\otimes b_2$$
for $a, b\in B$.
\end{exam}
\begin{lemma}\label{rigidtheorem}
If $G$ is connected and $X$ is a diagonalizable (super)group, then $G$ acts on $X$ trivially.
\end{lemma}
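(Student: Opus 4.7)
The plan is to exploit the description of a diagonalizable supergroup: $K[X]$ coincides with the group algebra $K[M]$ of a finitely generated abelian group $M$, and in particular $K[X]$ is purely even and admits a $K$-basis consisting of grouplike characters. For a character $\chi$ I would expand the coaction as $\rho^*(\chi)=\sum_\psi \psi\otimes f_\psi$, where $\psi$ ranges over characters and almost all $f_\psi\in K[G]$ vanish.

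Applying the principal diagram to $\chi$ and using $\Delta_X(\chi)=\chi\otimes\chi$, together with the fact that every $\psi$ is even so that no sign factors appear, one obtains the identity
\[\sum_\psi \psi\otimes\psi\otimes f_\psi=\sum_{\psi,\psi'}\psi\otimes\psi'\otimes f_\psi f_{\psi'}.\]
Comparing coefficients of the linearly independent elements $\psi\otimes\psi'$ shows that $\{f_\psi\}$ is a family of pairwise orthogonal idempotents of $K[G]$. The unit-preservation identity $\sum\epsilon_X(f_1)h_2=\epsilon_X(f)$ recorded in the excerpt then yields $\sum_\psi f_\psi=1$, since $\epsilon_X(\psi)=1$ for every grouplike $\psi$.

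Since $G$ is connected, $K[G]_0$ contains no nontrivial idempotents; because every $\psi$ is even, each $f_\psi$ in fact lies in $K[G]_0$. It follows that exactly one $f_\psi$ equals $1$ and the remaining ones vanish. To pin down which $\psi$ survives, I would apply $id_{K[X]}\otimes\epsilon_G$ to $\rho^*(\chi)$ and use the coaction axiom, giving $\sum_\psi\epsilon_G(f_\psi)\psi=\chi$; hence the surviving character must be $\chi$ itself. Therefore $\rho^*(\chi)=\chi\otimes 1$ for every character $\chi$, and since characters span $K[X]$, the action is trivial.

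I do not anticipate a real obstacle here; the only point worth keeping in mind is that the purely-even nature of $K[X]$ both eliminates the sign factors in the principal diagram and forces the idempotents $f_\psi$ to sit inside $K[G]_0$, each of which is essential for the argument above.
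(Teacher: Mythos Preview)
Your proof is correct and follows essentially the same route as the paper's: expand $\rho^*(\chi)$ along the basis of grouplike elements, use the principal diagram to see that the coefficients form a complete system of orthogonal idempotents, and invoke connectedness of $G$ to conclude that only one survives. Your write-up is in fact more explicit than the paper's on two points the paper leaves implicit---that the $f_\psi$ lie in $K[G]_0$ because $K[X]$ is purely even, and that the surviving character must be $\chi$ itself via the counit axiom---so there is nothing to correct.
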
 
\begin{proof}
By definition, $B=K[X]$ is generated by group-like elements (cf. \cite{wat}).   
If $f$ is a group-like element, then $\rho^*(f)=\sum f_1\otimes h_2$, where all elements $f_1$ are group-like and linearly independent. Using the principal diagram 
we obtain that the elements $h_2$ are pairwise orthogonal idempotents and 
$\sum h_2=1$. Since $G$ is connected, only one $h_2$ is non-zero. Thus $h_2=1$ and $f_1=f$. 
\end{proof}
\begin{lemma}\label{primitiveelements}
The supersubspace $P(B)$ of all primitive elements of $B=K[X]$ is a supersubcomodule.
\end{lemma}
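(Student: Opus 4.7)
The plan is to chase a primitive element $f \in P(B)$ around the principal diagram from Section~6. Writing $\rho^*(f) = \sum f' \otimes h'$ with the $h' \in K[G]$ chosen linearly independent, I want to show each $f'$ is itself primitive, which gives $\rho^*(f) \in P(B) \otimes K[G]$.

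The upper path of the principal diagram sends $f$ to
\[
(\Delta_X \otimes \mathrm{id}_{K[G]})\rho^*(f) = \sum \Delta_X(f') \otimes h'.
\]
For the lower path, I first apply $\Delta_X(f) = f \otimes 1 + 1 \otimes f$ (using that $f$ is primitive), then $\rho^* \otimes \rho^*$, and finally the signed twist $f_1 \otimes h_1 \otimes f_2 \otimes h_2 \mapsto (-1)^{|h_1||f_2|} f_1 \otimes f_2 \otimes h_1 h_2$. Since $\rho^*$ is an algebra map, $\rho^*(1) = 1 \otimes 1$, and in each of the two summands of $\Delta_X(f)$ one tensor factor is the even unit $1$; consequently all Koszul signs in the twist are trivial, and the lower path evaluates to $\sum (f' \otimes 1 + 1 \otimes f') \otimes h'$.

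Equating the two expressions and invoking the linear independence of the $h'$, I conclude $\Delta_X(f') = f' \otimes 1 + 1 \otimes f'$ for each $f'$, i.e.\ $f' \in P(B)$. If one includes $\epsilon_X(f')=0$ in the definition of primitivity, it drops out by applying $\epsilon_X \otimes \mathrm{id}$ to this identity (alternatively, from the unit-preservation formula $\sum \epsilon_X(f')h' = \epsilon_X(f) = 0$ recorded just before Example~\ref{torusaction}, together with the linear independence of $\{h'\}$). I do not anticipate any real obstacle; the whole argument is a short diagram chase, and the only thing to watch is the Koszul sign bookkeeping in the twist, which collapses here because each summand of $\Delta_X(f)$ contains the even element $1$.
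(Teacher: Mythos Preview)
Your argument is correct and is precisely the paper's proof, carried out in full detail: the paper's one-line computation in the principal diagram is exactly your comparison of the upper and lower paths, and the paper likewise chooses the $h_2$ linearly independent to read off that each $f_1$ is primitive. Your additional remarks on the Koszul signs and on $\epsilon_X(f')=0$ are sound and simply make explicit what the paper leaves implicit.
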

\begin{proof}
If $f\in B$ is primitive, then computing in the principal diagram gives 
$$\sum\Delta_X(f_1)\otimes h_2=\sum (f_1\otimes 1+1\otimes f_1)\otimes h_2.$$
If we choose $h_2$ to be linearly independent, then we infer that all $f_1$ are primitive. 
\end{proof}

\section{Actions on abelian supergroups}

If $X=G_a^l\times (G_a^-)^k$ is an abelian unipotent supergroup, then the superalgebra $K[X]$ is freely generated by
basic elements $z_1, \ldots , z_l, z_{l+1}, \ldots, z_{l+k}$ of $P(K[X])$. Here $|z_i|=0$ for $1\leq i\leq l$ and 
$|z_i|=1$ for $l+1, \ldots, l+k$. The superalgebra $K[X]$ has a basis consisting of monomials $z^{\lambda}=\prod_{1\leq i\leq l+k}z_i^{\lambda_i}$, where
$0\leq \lambda_i\leq 1$ whenever $i\geq l+1$. Denote $\sum_{1\leq i\leq l+k}\lambda_i$ by $|\lambda|$ and  define a partial order on the set of exponents by $\lambda\leq\mu$ if 
$\lambda_i\leq\mu_i$ for each $1\leq i\leq l+k$.  
Then
$$\Delta_X(z^{\lambda})=\sum_{\mu\leq\lambda} (-1)^{k_{\lambda, \mu}} C^{\mu}_{\lambda}z^{\mu}\otimes z^{\lambda-\mu},$$ 
where $C^{\mu}_{\lambda}=\prod_{1\leq i\leq l}C^{\mu_i}_{\lambda_i}$ and $k_{\lambda, \mu}$ is the number of pairs $(i, j)$ such that 
$l+1\leq j<i\leq l+k$ satisfying $\mu_i=1, \lambda_j-\mu_j=1$. 

Let $X=X_s\times X_u$, where $X_s$ is a diagonalizable (super)group and $X_u=G_a^l\times (G_a^-)^k$ is the unipotent radical of $X$. Denote the character group of $X_s$ by $D$. Then the elements of $D$ form a basis of $K[X_s]$.  

Let $G$ be a connected supergroup acting on $X$ via $\rho : X\times G\to X$.
Then by Lemma \ref{primitiveelements}, 
$$\rho^*(z_i)=\sum_{1\leq j\leq l+k} z_j\otimes f_{ji}, f_{ji}\in K[G]_{|z_i|+|z_j|}$$
for $1\leq i\leq l+k$.
\begin{theorem}\label{actiononabelian}
The supergroup $G$ acts trivially on $X_s$. If $char K=0$, then the action $\rho$ is uniquely defined by the collection of group homomorphisms $f_i$ from $D$ to the additive group $(K[G]K[G]_1)_{|z_i|}$ such that 
$$\Delta_G(f_i(g))=f_i(g)\otimes 1 +\sum_{1\leq j\leq l+k} f_{ij}\otimes f_j(g)$$
for $1\leq i\leq l+k$ and every $g\in D$. 
\end{theorem}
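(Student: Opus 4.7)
My plan is to prove the two assertions separately. For the first---that $G$ acts trivially on $X_s$---I exploit the intrinsic characterization of $X_u$. As the unipotent radical, $X_u$ is a characteristic subsupergroup of $X$ and is therefore stable under every supergroup automorphism. In particular, $X_u$ is $G$-stable, so the quotient $X \to X/X_u \cong X_s$ carries a well-defined induced $G$-action. Since $X_s$ is diagonalizable and $G$ is connected, Lemma \ref{rigidtheorem} forces this induced action to be trivial. Unwinding the definition of triviality from Section 6, this gives $\rho^*(f) - f \otimes 1 \in I_{X_s} \otimes K[G]$ for every $f \in K[X]$.

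For the parametrization in characteristic zero, I would expand $\rho^*(d)$ in the basis of $K[X]$ given by the monomials $d' z^{\lambda}$. The triviality on $X_s$ ensures that the only $z$-degree zero term in the expansion is $d \otimes 1$, while the $z$-degree one contribution takes the form $\sum_i d z_i \otimes f_i(d)$ for uniquely determined $f_i(d) \in K[G]$; the absence of cross-terms $d' z_i$ with $d' \neq d$ follows from the compatibility of $\rho^*$ (an algebra map) with the quotient $X \to X_s$. A parity count gives $f_i(d) \in K[G]_{|z_i|}$, and the identity axiom $(\mathrm{id} \otimes \epsilon_G)\rho^*(d) = d$ refines this to $f_i(d) \in (K[G]K[G]_1)_{|z_i|}$. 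Applying the associativity axiom $(\rho^* \otimes \mathrm{id}_{K[G]})\rho^* = (\mathrm{id}_{K[X]} \otimes \Delta_G)\rho^*$ to $d$, using $\rho^*(z_i) = \sum_j z_j \otimes f_{ji}$, and reading off the coefficient of $d z_i$ on both sides yields the comultiplication identity
\[
\Delta_G(f_i(d)) = f_i(d) \otimes 1 + \sum_{j} f_{ij} \otimes f_j(d).
\]
The group-homomorphism property $f_i(d_1 d_2) = f_i(d_1) + f_i(d_2)$ comes from multiplicativity $\rho^*(d_1 d_2) = \rho^*(d_1) \rho^*(d_2)$ by matching coefficients of $d_1 d_2 z_i$. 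Conversely, from such a collection $\{f_i\}$ one reconstructs $\rho^*$ by defining it on the generators $d$ and $z_i$ and extending multiplicatively, then verifying the action axioms.

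The main obstacle I expect is controlling the higher-order terms in the expansion of $\rho^*(d)$, namely the coefficients of monomials $z^\lambda$ with $|\lambda| \geq 2$. These are in principle determined inductively by the lower-order data via the action axioms, but verifying consistency---in particular that the resulting $\rho^*$ satisfies the principal diagram on all of $K[X]$---requires careful bookkeeping of the signs arising from the convolution product and from the interaction with the odd generators $z_{l+1}, \ldots, z_{l+k}$.
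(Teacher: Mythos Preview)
Your overall architecture mirrors the paper's, but two of your justifications are incorrect, and the gap you flag as ``bookkeeping'' is precisely where the missing arguments live.

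First, the claim that the counit axiom $(\mathrm{id}\otimes\epsilon_G)\rho^*(d)=d$ forces $f_i(d)\in (K[G]K[G]_1)_{|z_i|}$ is wrong. That axiom only yields $\epsilon_G(f_i(d))=0$, i.e.\ $f_i(d)\in K[G]^+$, which is far weaker when $|z_i|=0$. The paper's route is different: it first computes the higher-order coefficients explicitly via the principal diagram, obtaining the closed formula $f_\lambda(g)=\prod_i f_i(g)^{(\lambda_i)}$, and then argues that since only finitely many $f_\lambda(g)$ are nonzero, each $f_i(g)$ must be \emph{nilpotent}. In characteristic zero $K[G_{ev}]=K[G]/K[G]K[G]_1$ is reduced, so the nilradical of $K[G]$ equals $K[G]K[G]_1$, and the conclusion follows. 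Thus the higher-order analysis is not optional bookkeeping; it is the source of the nilpotency you need.

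Second, your argument for the vanishing of the cross-terms $d'z_i$ with $d'\neq d$ does not follow from ``compatibility with the quotient $X\to X_s$''. Triviality of the induced action on $X_s$ only tells you that the $\lambda=0$ part of $\rho^*(d)$ is $d\otimes 1$; it says nothing about which group-likes appear in front of the $z^\lambda$ with $\lambda>0$. The paper obtains this vanishing from the principal diagram, i.e.\ from the fact that $G$ acts by \emph{group} automorphisms: comparing $(\Delta_X\otimes\mathrm{id})\rho^*(d)$ with the image of $\Delta_X(d)=d\otimes d$ under $\rho^*\otimes\rho^*$ forces $f_{h,\lambda}(d)=0$ for $h\neq d$, and simultaneously produces the recursion $(-1)^{k_{\pi+\chi,\pi}}C^{\pi}_{\pi+\chi}f_{\pi+\chi}(d)=(-1)^{|z^\pi||z^\chi|}f_\pi(d)f_\chi(d)$ that feeds into the first point.

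Your treatment of the first assertion (via $X_u$ characteristic, hence $G$-stable, then Lemma~\ref{rigidtheorem} on the quotient) is a legitimate variant of the paper's argument, which instead shows directly that the ideal generated by primitives is a subcomodule (Lemma~\ref{primitiveelements}).
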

\begin{proof}
We can identify $K[X_s]$ and $K[X_u]$ with Hopf supersubalgebras of $K[X]$.
Then $P(K[X])=P(K[X_u])$, and $P(K[X_u])$ generates the Hopf superideal $I_{X_s}$ in $K[X]$. 
By Lemma \ref{primitiveelements}, $X_s$ is a $G$-stable, and by Lemma \ref{rigidtheorem}, $G$ acts trivially on $X_s$.
The last statement can be reformulated in the previous notations as 
$$\rho^*(g)=g\otimes 1 +\sum_{h\in D, \lambda >0} hz^{\lambda}\otimes f_{h, \lambda}(g)$$
for $g\in D$ and $f_{h, \lambda}(g)\in K[G]$, where $|f_{h, \lambda}(g)|=|z^{\lambda}|$. 
Using the principal diagram we infer 
$$\sum_{h\in D, \lambda>0} (h\otimes h)\Delta_{X_u}(z^{\lambda})\otimes f_{h, \lambda}(g)=
\sum_{h\in D, \lambda>0}hz^{\lambda}\otimes g\otimes f_{h, \lambda}(g) +$$
$$\sum_{h\in D, \lambda>0}g\otimes hz^{\lambda}\otimes f_{h, \lambda}(g) +
\sum_{h, h'\in D, \pi, \chi>0}(-1)^{|z^{\pi}||z^{\chi}|}hz^{\pi}\otimes h' z^{\chi}\otimes f_{h, \pi}(g)f_{h' , \chi}(g).
$$
Therefore $f_{h, \lambda}=0$ for any $h\neq g$. If we denote $f_{g, \lambda}(g)$ by $f_{\lambda}(g)$, then the previous formula for
$\Delta_{X_u}(z^{\lambda})$ implies
$$\sum_{\lambda>0}\sum_{0<\mu <\lambda}(-1)^{k_{\lambda, \mu}}C^{\mu}_{\lambda} gz^{\mu}\otimes gz^{\lambda-\mu}\otimes f_{\lambda}(g)=
\sum_{0<\pi, \chi} (-1)^{|z^{\pi}||z^{\chi}|} gz^{\pi}\otimes gz^{\chi}\otimes f_{\pi}(g)f_{\chi}(g).$$
In particular, if there is at least one number $i\geq l+1$ such that $\pi_i+\chi_i\geq 2$, then
$f_{\pi}(g)f_{\chi}(g)=0$. Otherwise, 
$$(-1)^{k_{\pi+\chi, \pi}}C^{\pi}_{\pi+\chi}f_{\pi+\chi}(g)=(-1)^{|z^{\pi}||z^{\chi}|}f_{\pi}(g)f_{\chi}(g).$$
If $char K=0$, then this recursive formula gives
$$f_{\lambda}(g)=\prod_{1\leq i\leq l+k}\frac{f_i(g)^{\lambda_i}}{\lambda_i !}=\prod_{1\leq i\leq l+k}f_i(g)^{(\lambda_i)},$$ 
where $f_i(g)=f_{\epsilon_i}(g)$ and 
$\epsilon_i=(0,\ldots , \underbrace{1}_{i-th \ place}, \ldots, 0)$ for $1\leq i\leq l+k$.
Since all above sums are finite, every element $f_i(g)$ is nilpotent. In particular, each $f_i$ maps $D$ into $(K[G]K[G]_1)_{|z_i|}$.

Consider even elements $z(g)=\sum_{1\leq i\leq l+k} z_i\otimes f_i(g)$ for $g\in G$. Then  
$$\rho^*(g)=(g\otimes 1)\sum_{k\geq 0} z(g)^{(k)}$$
for $g\in D.$
Since $\rho^*$ is a superalgebra morphism, we have $\rho^*(gh)=
\rho^*(g)\rho^*(h)$ for all $g, h\in D$. The latter holds if and only if 
$z(gh)=z(g)+z(h)$, that is  $f_i(gh)=f_i(g)+f_i(h)$ for every $i$.    

Finally, $\rho^*$ defines a supercomodule structure on $K[X]$ if and only if $\rho^*$ satisfies 
$$
\sum_{k\geq 0}((id_{K[X_u]}\otimes\Delta_G)z(g))^{(k)}=
(\sum_{k\geq 0} (z(g)\otimes 1)^{(k)})(\sum_{k\geq 0}((\rho^*\otimes id_{K[G]})z(g))^{(k)})=$$
$$
\sum_{k\geq 0}(z(g)\otimes 1 + (\rho^*\otimes id_{K[G]})z(g))^{(k)}.$$
Therefore, $K[X]$ is a supercomodule with respect to $\rho^*$ if and only if
$$(id_{K[X_u]}\otimes\Delta_G)z(g)=z(g)\otimes 1 + (\rho^*\otimes id_{K[G]})z(g).$$
In other words, for every $i$ we have the condition
$$\Delta_G(f_i(g))=f_i(g)\otimes 1 +\sum_{1\leq j\leq l+k} f_{ij}\otimes f_j(g).$$
\end{proof}

\section{First counterexample}

Suppose that $char K=0$. Let $G$ be a (connected) algebraic supergroup.    
We start with the following crucial observation.
Assume that there are nilpotent elements $f_i\in K[G]$ for $1\leq i\leq l+k$ such that $|f_i|=0$ provided $1\leq i\leq l$ and $|f_i|=1$ otherwise,
and $\Delta_G(f_i)=f_i\otimes 1+\sum_{1\leq j\leq l+k} f_{ij}\otimes f_j$ for some coefficients $f_{ij}\in K[G]$. 
Then the coefficients $f_{ij}$ satisfy
$$\Delta_G(f_{ij})=\sum_{1\leq t\leq l+k}f_{it}\otimes f_{tj} \quad \text{and} \quad \epsilon_G(f_{ij})=\delta_{ij}.$$ 
This means they define a $G$-supermodule with a basis $z_i$ for $1\leq i\leq l+k$ such that $|z_i|=|f_i|$ via
the supercomodule map $z_i\to \sum_{1\leq j\leq l+k} z_j\otimes f_{ji}.$
By Theorem \ref{actiononabelian}, we derive that the action of $G$ on the abelian supergroup $X=G_m\times G_a^l\times (G_a^-)^k$
is given by the rule
$$\rho^*(z_i)=\sum z_j\otimes f_{ji}, \rho^*(t^n)=\sum_{\lambda\geq 0} n^{|\lambda|} t^n z^{\lambda}\otimes f^{(\lambda)},$$
where $f^{(\lambda)}=\prod_{1\leq i\leq l+k}f_i^{(\lambda_i)}$, $t$ is a group-like generator of $K[G_m]$
and $n\in\mathsf{Z}$.

Set $G\simeq (G_a^-)^2$ and denote by $u$ and $v$ primitive generators of $K[G]$. If $f_1=uv, f_2=u, f_3=v,$
then
$$\Delta_G(f_i)= f_i\otimes 1+\sum_{1\leq j\leq 3} f_{ij}\otimes f_j,$$ 
where
$$\left(\begin{array}{ccc}
f_{11} & f_{12} & f_{13} \\
f_{21} & f_{22} & f_{23} \\
f_{31} & f_{32} & f_{33} \\
\end{array}\right) =
\left(\begin{array}{ccc}
1 & -v & u \\
0 & 1 & 0 \\
0 & 0 & 1 \\
\end{array}\right).
$$
This implies that $G$ acts on $X=G_m\times G_a\times (G_a^-)^2$. Define a semi-direct product
$H=X\rtimes G$.
\begin{lemma}\label{none}
No non-trivial supersubgroup of $X_u$ is normal in $H$.
\end{lemma}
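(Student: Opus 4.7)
The plan is to assume $Y\leq X_u$ is a non-trivial supersubgroup with $Y\unlhd H$ and derive a contradiction. The crucial point is that the apparently superfluous $G_m$-factor of $X$ plays a decisive role: the $G$-action on the coordinate $t$ mixes $t$ with every primitive of $K[X_u]$ as well as with the product $z_2z_3$, so the single condition $\rho^*(t-1)\in I_Y\otimes K[G]$ already forces $I_Y$ to contain all primitives of $K[X_u]$.

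First I would reformulate $Y\leq X_u$ in terms of Hopf superideals. Since $\mathrm{char}\,K=0$ and $X_u=G_a\times (G_a^-)^2$ is abelian unipotent, the Hopf superideal $\tilde I_Y\subseteq K[X_u]$ defining $Y$ is generated by its primitives $V:=\tilde I_Y\cap\langle z_1,z_2,z_3\rangle$, and $Y=1$ is equivalent to $V=\langle z_1,z_2,z_3\rangle$. In $K[X]$ the defining ideal reads $I_Y=(t-1)+V\cdot K[X]$, and normality of $Y$ in $H=X\rtimes G$ is equivalent to $\rho^*(I_Y)\subseteq I_Y\otimes K[G]$, because $X$ is abelian (so normality in $X$ is automatic) and the $G$-action on $X$ coincides with conjugation inside $H$.

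Next I would compute $\rho^*(t-1)$ from the formula $\rho^*(t^n)=\sum_\lambda n^{|\lambda|}tz^\lambda\otimes f^{(\lambda)}$ displayed at the start of Section 8. With $(f_1,f_2,f_3)=(uv,u,v)$ and using $u^2=v^2=(uv)^2=0$, the only exponents producing a non-zero summand are $\lambda\in\{0,\epsilon_1,\epsilon_2,\epsilon_3,\epsilon_2+\epsilon_3\}$, yielding
\[
\rho^*(t-1)=(t-1)\otimes 1+tz_1\otimes uv+tz_2\otimes u+tz_3\otimes v+tz_2z_3\otimes uv.
\]
Expanding in the $K$-basis $\{1,u,v,uv\}$ of $K[G]$, membership of this element in $I_Y\otimes K[G]$ is equivalent to $tz_2,\;tz_3,\;tz_1+tz_2z_3\in I_Y$. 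Reducing modulo $t-1\in I_Y$ places $z_2$, $z_3$, and $z_1+z_2z_3$ in $\tilde I_Y$; since $z_2z_3\in\tilde I_Y$ as soon as $z_2$ lies there, we conclude $z_1,z_2,z_3\in V$, so $V=\langle z_1,z_2,z_3\rangle$ and $Y=1$.

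The main obstacle is the bookkeeping in the divided-power expansion of $\rho^*(t)$: candidate exponents such as $(1,1,0)$, $(1,0,1)$, $(1,1,1)$ vanish because $uv\cdot u=uv\cdot v=0$, and one must keep track of exactly which five terms survive. Once the formula for $\rho^*(t-1)$ is in hand, the linear independence of $\{1,u,v,uv\}$ in $K[G]$ closes the argument essentially by inspection; the role of the $G_m$-factor is precisely what distinguishes the genuinely $H$-normal subgroups of $X_u$ from the many $G$-stable subspaces of $\langle z_1,z_2,z_3\rangle$ (e.g.\ $\langle z_1\rangle$, corresponding to $(G_a^-)^2$) that the action restricted to $X_u$ would otherwise permit.
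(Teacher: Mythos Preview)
Your proof is correct and follows essentially the same approach as the paper's: both reduce the normality condition to $\rho^*(I_Y)\subseteq I_Y\otimes K[G]$, examine $\rho^*(t-1)$, and use the linear independence of $u,v,uv$ in $K[G]$ to force $z_1,z_2,z_3$ into the defining ideal. Your expansion is in fact slightly more careful than the paper's, which silently drops the $tz_2z_3\otimes uv$ summand; you include it and correctly dispose of it by noting that $z_2\in\tilde I_Y$ already gives $z_2z_3\in\tilde I_Y$, whence $z_1\in\tilde I_Y$ from the $uv$-coefficient.
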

\begin{proof}
Consider a non-trivial supersubgroup $N\unlhd X_u$. 
Then $K[X/N]$ is a Hopf supersubalgebra of $K[X]$. Since $X_u/N$ is abelian and unipotent, $K[X/N]$ is generated by $t$ and by a supersubspace $V$ of $W=\oplus_{1\leq i\leq 3} Kz_i$. Here $V\neq W$ but it is possible that $V=0$. It remains to prove that the superideal $I_N=K[X]V+K[X](t-1)$ does not satisfy
the condition $\rho^*(I_N)\subseteq I_N\otimes K[G]$. 
The element $\rho^*(t-1)$ equals $z_1\otimes uv +z_2\otimes u +z_3\otimes v$ modulo the superideal $I_N\otimes K[G]$.
Since $uv, u, v$ are linearly independent in $K[G]$, the condition $\rho^*(t-1)\in I_N\otimes K{G}$ implies that each $z_i$ belongs to $V$. Therefore $V=W$, which is a contradiction.        
\end{proof}
\begin{corollary}\label{unipradofH}
$H_u=1$. 
\end{corollary}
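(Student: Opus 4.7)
My plan is to suppose $N=H_u\ne 1$ and derive a contradiction in two movements. First, Lemma~\ref{none} will force $N\cap X=1$, which in turn forces $N$ to centralize $X$ in $H$. Second, inspection of $\rho$ will show that the only supersubgroup of $G$ acting trivially on $X$ is trivial, so the image of $N$ in $G$ is trivial; combined with $N\cap X=1$ this forces $N=1$.

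For the first movement, I would note that $N\cap X$ is a normal supersubgroup of $H$ (being the intersection of two normal supersubgroups) lying inside $X=G_m\times X_u$. Since $N$ is unipotent, so is $N\cap X$; as $G_m$ admits no non-trivial unipotent supersubgroup, $N\cap X$ projects trivially to $G_m$ and lies in $X_u$. Lemma~\ref{none} then forces $N\cap X=1$, after which $[N,X]\le N\cap X=1$, so $N$ centralizes $X$ in $H$.

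For the second movement, I would translate the centralizer condition into a statement about the image $\bar N$ of $N$ under the projection $H\to H/X\simeq G$. Writing $n=(y,g)\in N(A)\subseteq (X\rtimes G)(A)$ and using that $X$ is abelian, a direct calculation in the semidirect product yields
\[n(x,1)n^{-1}=(g\cdot x,1)\quad\text{for all }x\in X(A),\]
where $g\cdot x$ denotes the $\rho$-action. Hence $\bar N$ must be contained in the kernel $G_0$ of the action of $G$ on $X$. From $\rho^*(z_2)=z_2\otimes 1-z_1\otimes v$ and $\rho^*(z_3)=z_3\otimes 1+z_1\otimes u$, if $\pi\colon K[G]\to K[G_0]$ is the quotient, then triviality of the $G_0$-action forces $\pi(u)=\pi(v)=0$; since $u,v$ generate $K[G]^+$ in $K[G]=\Lambda(u,v)$, this yields $G_0=1$. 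Therefore $\bar N=1$, i.e.\ $N\le\ker(H\to G)=X$, and combined with $N\cap X=1$ this forces $N=1$, contradicting $N\ne 1$.

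The only step requiring an actual calculation is verifying the conjugation identity $n(x,1)n^{-1}=(g\cdot x,1)$ inside $X\rtimes G$, which uses both the semidirect-product multiplication rule and the abelianness of $X$; the rest follows formally from Lemma~\ref{none} together with the explicit form of $\rho^*$. I do not foresee a serious obstacle, since Lemma~\ref{none} has already absorbed the substantive content.
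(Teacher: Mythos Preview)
Your proposal is correct and follows essentially the same route as the paper: both arguments use Lemma~\ref{none} to force $H_u\cap X=1$, deduce that $H_u$ centralizes $X$, and then check that no non-trivial supersubgroup of $G$ acts trivially on $X$. The paper phrases the centralizing step via $XH_u\simeq X\times H_u$ and then does a small case split on $\dim H_u\in\{1,2\}$, using $\rho^*(t)$ for the final contradiction; your version is slightly cleaner in that the commutator inclusion $[N,X]\le N\cap X$ and the direct computation of the action-kernel via $\rho^*(z_2),\rho^*(z_3)$ handle all cases uniformly without the split.
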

\begin{proof}
Since $H_u\bigcap X=1$, $XH_u\simeq X\times H_u$ is a normal abelian supersubgroup of $H$ and $H_u$ is isomorphic to a supersubgroup of $G$. Thus $H_u=(G^-_a)^s$, where $s=1$ or $s=2$. If $s=2$, then $H=XH_u$, which is obviously impossible. 
If $s=1$, then some proper supersubgroup $R$ of $G$ acts identically on $X$. 
If $$\rho^*(t)-t\otimes 1=tz_1\otimes uv +tz_2\otimes u +tz_3\otimes v\in K[X]\otimes I_R,$$ then 
$u, v$ and $uv$ belong to $I_R$. Therefore $R=G$, which is a contradiction, and the proof is complete.    
\end{proof}
Finally, we note that $H_{ev}=X_{ev}=G_m\times G_a$ and $(H_{ev})_u=G_a$.

\section{Algebraic Lie supersubalgebras}

Throughout this section we assume that $char K=0$ unless otherwise stated.
Let $H$ be a connected algebraic supergroup. Let $\mathfrak{S}$ be a supersubalgebra of 
$\mathsf{Lie}(H)=\mathfrak{H}$. We say that $\mathfrak{S}$ is {\it algebraic supersubalgebra} of $H$ if there is an supersubgroup $S$ of $H$ such that $\mathsf{Lie}(S)=\mathfrak{S}$. In a broader sense, a Lie superalgebra $\mathfrak{R}$ is called {\it algebraic} if  there is an algebraic supergroup $R$ such that $\mathsf{Lie}(R)=\mathfrak{R}$.
\begin{prop}\label{description}
A supersubalgebra $\mathfrak{S}$ of $\mathfrak{H}$ is algebraic if and only if $\mathfrak{S}_0$ is an algebraic subalgebra of $\mathfrak{H}_0$. 
\end{prop}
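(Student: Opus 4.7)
The forward implication is immediate: if $\mathfrak{S}=\mathsf{Lie}(S)$ for a supersubgroup $S\leq H$, then
$\mathfrak{S}_0=\mathsf{Lie}(S)_0=\mathsf{Lie}(S_{ev})$, and $S_{ev}$ is a closed algebraic subgroup of the ordinary algebraic group $H_{ev}$, witnessing that $\mathfrak{S}_0$ is algebraic. The content of the statement is the converse.

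For the converse I would use the Harish-Chandra superpair technology cited in \cite{carfior, mas, vish}: in characteristic zero, connected algebraic supergroups $H$ correspond bijectively to Harish-Chandra pairs $(H_{ev},\mathsf{Lie}(H))$, and supersubgroups $S\leq H$ correspond to subpairs $(S_0,\mathfrak{S})$, where $S_0$ is a closed algebraic subgroup of $H_{ev}$, $\mathfrak{S}$ is a Lie supersubalgebra of $\mathsf{Lie}(H)$ with $\mathfrak{S}_0=\mathsf{Lie}(S_0)$, and $\mathfrak{S}$ is stable under $\mathsf{Ad}(S_0)$. Passing to $H^0$ first (and using Lemma \ref{charsubgr} to identify $H^0_{ev}=(H_{ev})^0$), I may and will assume $H$ is connected. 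Assume $\mathfrak{S}_0$ is algebraic. Then there exists an algebraic subgroup of $H_{ev}$ with Lie algebra $\mathfrak{S}_0$; replacing it with its connected component, let $S_0$ be the unique connected algebraic subgroup of $H_{ev}$ satisfying $\mathsf{Lie}(S_0)=\mathfrak{S}_0$ (unique by the classical correspondence in characteristic zero between connected subgroups and algebraic subalgebras).

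The key step is to check that the adjoint action of $S_0$ on $\mathsf{Lie}(H)$ preserves $\mathfrak{S}$. Differentiating, the infinitesimal action of $\mathfrak{S}_0$ on $\mathsf{Lie}(H)$ is exactly $\mathsf{ad}$ (this is where Lemma \ref{AdcommuteswithLieoperation} enters: it guarantees that the derivative of $\mathsf{Ad}$ is compatible with the Lie bracket, so $\mathsf{ad}(\mathfrak{S}_0)\mathfrak{S}\subseteq[\mathfrak{S}_0,\mathfrak{S}]\subseteq\mathfrak{S}$ because $\mathfrak{S}$ is a Lie supersubalgebra). Since $S_0$ is connected and $\mathsf{char}\,K=0$, invariance of a subspace under the Lie algebra action is equivalent to invariance under the group action (a standard argument via $V^{S_0}=V^{\mathfrak{S}_0}$, as used in Lemma \ref{normal}, applied to the $S_0$-supermodule $\mathsf{Lie}(H)/\mathfrak{S}$ shows that $\mathfrak{S}$ is an $S_0$-subsupermodule). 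Hence $(S_0,\mathfrak{S})$ is a valid Harish-Chandra subpair of $(H_{ev},\mathsf{Lie}(H))$, and the corresponding supersubgroup $S\leq H$ satisfies $\mathsf{Lie}(S)=\mathfrak{S}$.

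The main obstacle I anticipate is not a conceptual one but a bookkeeping one: making sure that the Harish-Chandra correspondence really does restrict to subobjects in the form stated (i.e.\ that the subpair $(S_0,\mathfrak{S})$ really produces a closed supersubgroup, not merely a subfunctor), and that $S_0$ can be taken connected without loss of generality. Both are handled by reducing to $H^0$ and appealing to the equivalence of categories between connected algebraic supergroups and Harish-Chandra pairs recalled in \cite{carfior, mas, vish}. The verification that $\mathfrak{S}$ is $\mathsf{Ad}(S_0)$-stable, while essentially automatic from $[\mathfrak{S}_0,\mathfrak{S}]\subseteq\mathfrak{S}$, is the only place where a small computation is needed, and it is supplied cleanly by Lemma \ref{AdcommuteswithLieoperation} together with the invariants argument of Lemma \ref{normal}.
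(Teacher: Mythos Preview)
Your proposal is correct and follows essentially the same route as the paper: use the Harish-Chandra pair equivalence, pick a connected $S_0\leq H_{ev}$ with Lie algebra $\mathfrak{S}_0$, check $\mathsf{Ad}(S_0)$-stability of $\mathfrak{S}$ via connectedness in characteristic zero, and recover $S$ from the pair $(S_0,\mathfrak{S})$. The paper cites Lemma~7.15 of Part~I in \cite{jan} for the stability step rather than Lemma~\ref{AdcommuteswithLieoperation} (which says $\mathsf{Ad}$ preserves the bracket, not that its differential is $\mathsf{ad}$), and it resolves your closedness concern by taking the image of the morphism $S\to H$ furnished by the category equivalence.
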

\begin{proof}
The necessary condition is obvious. 
By theorem 3.6 of \cite{carfior}, there is an equivalence between the category of connected algebraic supergroups and 
the category of connected {\it Harish-Chandra superpairs} given by $H\mapsto (H_{ev}, \mathfrak{H})$, where $H_{ev}$ acts on $\mathfrak{H}$ via the adjoint action
(see also Proposition 4.13 of \cite{mas}). Let $S_{ev}$ be  a connected subgroup of $H_{ev}$ such that
$\mathsf{Lie}(S_{ev})=\mathfrak{S}_0$. Since $S_{ev}$ is connected, we derive that $\mathfrak{S}_1$ is a $S_{ev}$-submodule of $\mathfrak{H}_1|_{S_{ev}}$ 
(cf. Lemma 7.15 of Part I in \cite{jan}). 
Therefore there is an embedding of Harish-Chandra superpairs $(S_{ev}, \mathfrak{S})\to (H_{ev}, \mathfrak{H})$. 
If $S$ is a connected algebraic supergroup whose Harish-Chandra superpair coincides with $(S_{ev}, \mathfrak{S})$, then there is
a morphism $\phi : S\to H$ such that the induced Lie superalgebra morphism $\mathsf{Lie}(S)\to \mathfrak{H}$ is an isomorphism onto $\mathfrak{S}$.
The supersubgroup $Im \phi$ of $H$ is such that its Lie superalgebra is $\mathfrak{S}$.   
\end{proof}
\begin{corollary}\label{evenpartcoincideswithitscommutant}
If $\mathfrak{S}'_0=\mathfrak{S}_0$, then $\mathfrak{S}$ is algebraic in $\mathfrak{H}$. In particular, 
if $\mathfrak{S}_0$ is semisimple, then $\mathfrak{S}$ is algebraic in $\mathfrak{H}$.
\end{corollary}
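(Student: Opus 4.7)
The plan is to reduce the statement, via Proposition \ref{description}, to a purely classical question about algebraic Lie subalgebras of the Lie algebra of an affine algebraic group in characteristic zero, and then to invoke Chevalley's theorem on algebraic hulls.

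First, Proposition \ref{description} says that $\mathfrak{S}$ is algebraic in $\mathfrak{H}$ if and only if $\mathfrak{S}_0$ is algebraic in $\mathfrak{H}_0=\mathsf{Lie}(H_{ev})$, so everything reduces to showing that $\mathfrak{S}_0$ is algebraic in $\mathfrak{H}_0$. Let $A(\mathfrak{S}_0)$ denote the algebraic hull of $\mathfrak{S}_0$ in $\mathfrak{H}_0$, i.e., the intersection of all algebraic Lie subalgebras of $\mathfrak{H}_0$ that contain $\mathfrak{S}_0$. A classical theorem of Chevalley (embedding $H_{ev}$ into some $GL(V)$ via a faithful rational representation, so that $\mathfrak{H}_0\hookrightarrow \mathfrak{gl}(V)$) gives the two facts we need:
\[
[A(\mathfrak{S}_0), A(\mathfrak{S}_0)] \;=\; [\mathfrak{S}_0,\mathfrak{S}_0] \;=\; \mathfrak{S}'_0,
\]
and the commutator subalgebra of any algebraic Lie algebra is again algebraic. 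Under the hypothesis $\mathfrak{S}'_0=\mathfrak{S}_0$, these combine to give
\[
\mathfrak{S}_0 \;=\; \mathfrak{S}'_0 \;=\; [A(\mathfrak{S}_0), A(\mathfrak{S}_0)],
\]
which is algebraic as the commutator of an algebraic Lie algebra. The ``in particular'' clause is then immediate: a semisimple Lie algebra is perfect, so $\mathfrak{S}_0$ semisimple forces $\mathfrak{S}'_0=\mathfrak{S}_0$ and the first part applies.

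The only potential subtlety is to make sure Chevalley's result is cited in the correct generality, namely for Lie subalgebras of the Lie algebra of an arbitrary affine algebraic group rather than only for $\mathfrak{gl}_n$; once one fixes a faithful finite-dimensional representation of $H_{ev}$, this transfer is routine. Beyond that, the argument is entirely standard and does not require the Harish-Chandra superpair machinery a second time, since Proposition \ref{description} has already absorbed the odd part $\mathfrak{S}_1$ into the even problem.
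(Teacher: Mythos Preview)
Your proof is correct and follows essentially the same route as the paper. The paper's own proof is a one-line citation of Theorem~6.1 in Hochschild's \emph{Algebraic groups and Hopf algebras} (with a parallel reference to Demazure--Gabriel), which is precisely the classical fact that the derived subalgebra of any Lie subalgebra of the Lie algebra of an affine algebraic group in characteristic zero is algebraic; your formulation via Chevalley's algebraic hull and the identity $[A(\mathfrak{S}_0),A(\mathfrak{S}_0)]=[\mathfrak{S}_0,\mathfrak{S}_0]$ is just an unpacking of that same result, and the reduction to the even part via Proposition~\ref{description} is identical.
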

\begin{proof}
The proof follows from Theorem 6.1 of \cite{hoch} (see also Proposition 2.6 of part II, \S 6 in \cite{dg}).
\end{proof}
\begin{corollary}\label{coincidenceofcommutants}
Let $G$ be connected and $\mathfrak{G}=\mathsf{Lie}(G)$. Then $\mathsf{Lie}(G')=\mathfrak{G}'$ if and only if
$\mathfrak{G}'_0 +\mathfrak{G}'_1$ is an algebraic subalgebra of $\mathfrak{G}_0$. 
\end{corollary}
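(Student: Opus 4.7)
The plan is to deduce the corollary directly from Proposition \ref{description}, combined with the fact that for a connected algebraic supergroup $G$ one always has the inclusion $\mathfrak{G}' \subseteq \mathsf{Lie}(G')$. I will read the expression $\mathfrak{G}'_0 + \mathfrak{G}'_1$ as the even component $(\mathfrak{G}')_0 = [\mathfrak{G}_0,\mathfrak{G}_0] + [\mathfrak{G}_1,\mathfrak{G}_1]$ of the Lie supercommutant, since the odd piece $[\mathfrak{G}_0,\mathfrak{G}_1]$ cannot sit inside $\mathfrak{G}_0$.

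For the necessity direction, if $\mathsf{Lie}(G') = \mathfrak{G}'$, then $\mathfrak{G}'$ is the Lie superalgebra of the supersubgroup $G'\leq G$, hence algebraic in $\mathfrak{G}$ by definition. Proposition \ref{description} immediately yields that its even part $[\mathfrak{G}_0,\mathfrak{G}_0]+[\mathfrak{G}_1,\mathfrak{G}_1]$ is an algebraic subalgebra of $\mathfrak{G}_0$.

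For sufficiency, assume $[\mathfrak{G}_0,\mathfrak{G}_0]+[\mathfrak{G}_1,\mathfrak{G}_1]$ is algebraic in $\mathfrak{G}_0$. Proposition \ref{description} then produces a connected supersubgroup $S\leq G$ with $\mathsf{Lie}(S) = \mathfrak{G}'$. Because $\mathfrak{G}'$ is a superideal of $\mathfrak{G}$ and both $G$ and $S$ are connected, Lemma \ref{normal} gives $S\unlhd G$. The quotient $G/S$ is connected by Lemma \ref{factorofconnect}, and its Lie superalgebra is $\mathfrak{G}/\mathfrak{G}'$, which is abelian; Lemma \ref{whenLiesuperalgisabelian} therefore forces $G/S$ to be abelian, so $G'\leq S$ and $\mathsf{Lie}(G')\subseteq\mathfrak{G}'$. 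Combined with the general inclusion $\mathfrak{G}' = [\mathfrak{G},\mathfrak{G}] \subseteq \mathsf{Lie}(G')$ (obtained from the $\mathsf{Ad}$-action of $G$ on $\mathfrak{G}$ restricted to $G'$, or equivalently from the infinitesimal commutator computation in $\mathbf{Lie}(G)$ using the dual-numbers description of Section 2), we conclude $\mathsf{Lie}(G') = \mathfrak{G}'$.

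The only step that requires a little care is the inclusion $\mathfrak{G}'\subseteq\mathsf{Lie}(G')$, since this is the place where one uses that commutators $[x,y]$ of elements of $\mathfrak{G}$ arise as tangent vectors to honest commutators in $G$; this is the super-analogue of the classical argument and is transparent in the Harish-Chandra superpair picture already invoked in Proposition \ref{description}. The remainder is a routine assembly of the earlier normality and abelianness criteria.
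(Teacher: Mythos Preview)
Your proof is correct and follows essentially the same route as the paper's: invoke Proposition \ref{description} to realize $\mathfrak{G}'$ as $\mathsf{Lie}(S)$ for a connected $S\leq G$, use Lemma \ref{normal}, Lemma \ref{factorofconnect} and Lemma \ref{whenLiesuperalgisabelian} to conclude $G/S$ is abelian and hence $G'\leq S$, then combine with the reverse inclusion $\mathfrak{G}'\subseteq\mathsf{Lie}(G')$. The only cosmetic difference is that the paper obtains this last inclusion (and the identification $\mathsf{Lie}(G/S)=\mathfrak{G}/\mathfrak{G}'$) by citing Proposition 9.1 of \cite{zub}, whereas you sketch it via the $\mathsf{Ad}$-action and dual numbers; both are standard and your reading of the expression $\mathfrak{G}'_0+\mathfrak{G}'_1$ as $(\mathfrak{G}')_0=[\mathfrak{G}_0,\mathfrak{G}_0]+[\mathfrak{G}_1,\mathfrak{G}_1]$ is the intended one, as confirmed by Remark \ref{criterion}.
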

\begin{proof}
The necessary condition is obvious.
Conversely, Proposition \ref{description} implies that $\mathfrak{G}'=\mathsf{Lie}(S)$ for a connected supersubgroup $S$ of $G$.  
Using Lemma \ref{normal}, Proposition 9.1 of \cite{zub}, Lemma \ref{factorofconnect} and Lemma \ref{whenLiesuperalgisabelian}
we infer that $S\unlhd G$ and $G/S$ is an abelian supergroup.
Thus $G'\leq S$ and $\mathsf{Lie}(G')\subseteq\mathfrak{G}'$. Applying Proposition 9.1 of \cite{zub} again we obtain that $\mathfrak{G}'\subseteq \mathsf{Lie}(G')$.
Therefore $\mathsf{Lie}(G')=\mathfrak{G}'=\mathsf{Lie}(S)$. Since both $G'$ and $S$ are connected, we conclude that $G'=S$.
\end{proof}
\begin{rem}\label{criterion}
One can reformulate Corollary \ref{coincidenceofcommutants} in the following way. The Lie superalgebra $\mathsf{Lie}(G')$ is different from $\mathfrak{G}'$ if and only if there is an element $x\in \mathfrak{G}_1$ such that $[x, x]$ generates a  subalgebra in $\mathsf{Lie}(G_{ev}/G_{ev}')=\mathfrak{G}_0/\mathfrak{G}_0'$ that is not algebraic.  
\end{rem}
Let $\mathfrak{L}$ be a Lie superalgebra.
An {\it automorphism group functor} $Aut(\mathfrak{L}_a)$ is defined via
$$Aut(\mathfrak{L}_a)(A)=\{g\in GL(\mathfrak{L})(A) | \forall x, y\in\mathfrak{L}_a(A), g[x, y]=[gx, gy]\}.$$ 
\begin{prop}\label{der}
$Aut(\mathfrak{L}_a)$ is an algebraic supergroup of $GL(\mathfrak{L})$ and \linebreak \noindent
$\mathsf{Lie}(Aut(\mathfrak{L}_a))=Der (\mathfrak{L})$.
Therefore, for any Lie superalgebra $\mathfrak{L}$, its superalgebra of (super)derivations 
$Der(\mathfrak{L})$ is algebraic. 
\end{prop}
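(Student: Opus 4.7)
The plan is first to realize $Aut(\mathfrak{L}_a)$ as the vanishing locus of explicit polynomial equations inside $GL(\mathfrak{L})$, and then to identify its Lie superalgebra by the dual-numbers description of ${\bf Lie}$ from Section 2.

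First, I would fix a homogeneous basis $e_1,\dots,e_{m+n}$ of $\mathfrak{L}$ with structure constants $[e_i,e_j]=\sum_k \gamma^k_{ij}e_k$, and let $C=(c_{pq})$ be the generic matrix of $GL(\mathfrak{L})$ as in Example \ref{generallinear}. For $g\in GL(\mathfrak{L})(A)$, write $g(e_j\otimes 1)=\sum_p e_p\otimes c_{pj}(g)$. The equations $g([e_i,e_j]\otimes 1)=[g(e_i\otimes 1),g(e_j\otimes 1)]$, rewritten in the basis using the tensor-product bracket $[u\otimes a,v\otimes b]=(-1)^{|a||v|}[u,v]\otimes ab$, become an explicit system of polynomial relations $P^k_{ij}(c_{pq})=0$ of definite parity in the generators of $K[GL(\mathfrak{L})]$. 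The superideal $I$ they generate defines $Aut(\mathfrak{L}_a)$ as a closed subfunctor; that this subfunctor is closed under multiplication and inversion (and contains the unit) is immediate from the fact that compositions and inverses of bracket-preserving even maps are bracket-preserving, which translates into $\Delta(I)\subseteq I\otimes K[GL(\mathfrak{L})]+K[GL(\mathfrak{L})]\otimes I$ and $s(I)\subseteq I$. Hence $I$ is a Hopf superideal and $Aut(\mathfrak{L}_a)$ is an algebraic supersubgroup of $GL(\mathfrak{L})$.

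Next, I would compute $\mathsf{Lie}(Aut(\mathfrak{L}_a))$ using ${\bf Lie}(G)(A)=\ker(G(p_A))$. An element of $Aut(\mathfrak{L}_a)(A[\epsilon_0,\epsilon_1])$ projecting to the identity in $GL(\mathfrak{L})(A)$ has the form $g=\mathrm{id}+\epsilon_0 d_0+\epsilon_1 d_1$, where $d_0\in (\mathsf{End}_K(\mathfrak{L})\otimes A)_0$ and $d_1\in(\mathsf{End}_K(\mathfrak{L})\otimes A)_1$ (so that each $\epsilon_i d_i$ is even, as required for $g$ to be an even $A[\epsilon_0,\epsilon_1]$-linear endomorphism of $\mathfrak{L}\otimes A[\epsilon_0,\epsilon_1]$). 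Expanding $g[x,y]=[g(x),g(y)]$ modulo $\epsilon_i\epsilon_j=0$ and separating the coefficients of $\epsilon_0$ and $\epsilon_1$, the $\epsilon_0$-part gives the usual Leibniz rule for an even derivation $d_0$, and the $\epsilon_1$-part, after accounting for the sign $(-1)^{|y|}$ produced by moving $\epsilon_1$ past $y$, gives the Leibniz rule for an odd superderivation $d_1$. Conversely, any such pair $(d_0,d_1)$ defines an element of the kernel. Hence ${\bf Lie}(Aut(\mathfrak{L}_a))(A)\simeq Der(\mathfrak{L})\otimes A$ functorially via $(d_0,d_1)\mapsto\mathrm{id}+\epsilon_0 d_0+\epsilon_1 d_1$, and the identification $\mathsf{Lie}(G)_a\simeq{\bf Lie}(G)$ recorded in Section 2 yields $\mathsf{Lie}(Aut(\mathfrak{L}_a))=Der(\mathfrak{L})$. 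The final sentence is then immediate: $Der(\mathfrak{L})$ is by construction the Lie superalgebra of the algebraic supergroup $Aut(\mathfrak{L}_a)$, so it is algebraic in the sense introduced before the proposition.

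The step demanding most care is the sign-bookkeeping in the odd part of the Lie-superalgebra computation: one must check, using the bracket convention $(-1)^{|a||v|}$ on $\mathfrak{L}\otimes A[\epsilon_0,\epsilon_1]$, that the identity obtained from equating $\epsilon_1$-coefficients coincides (up to the supersymmetry of the bracket) with the standard Leibniz rule for odd superderivations as used in the paper, so that $d_1$ genuinely lies in $Der(\mathfrak{L})_1$ rather than in some twisted variant. The Hopf-superideal check in the first paragraph is also slightly delicate because one must see that the bracket-preservation relations and their translates under $\Delta$ really do lie in $I\otimes K[GL(\mathfrak{L})]+K[GL(\mathfrak{L})]\otimes I$; but this reduces to the functorial statement that the product of two bracket-preserving elements is bracket-preserving, applied on $A$-points for arbitrary $A$.
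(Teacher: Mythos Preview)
Your proposal is correct and follows essentially the same approach as the paper: write the bracket-preservation condition in a fixed homogeneous basis to exhibit $Aut(\mathfrak{L}_a)$ as a closed supersubgroup of $GL(\mathfrak{L})$, then compute its Lie superalgebra via the dual-numbers description $g=\mathrm{id}+\epsilon_0 u_0+\epsilon_1 u_1$ and read off the even and odd superderivation conditions from the $\epsilon_0$- and $\epsilon_1$-coefficients. The only cosmetic difference is that the paper takes $A=K$ in the $\epsilon$-computation (which suffices to identify $\mathsf{Lie}(Aut(\mathfrak{L}_a))$), whereas you carry a general $A$; your extra remarks on the Hopf superideal check and the sign in the odd part are accurate but not needed beyond what the paper does.
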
 
\begin{proof}
Since any $g\in GL(\mathfrak{L})(A)$ is $A$-linear, $g\in Aut(\mathfrak{L}_a)(A)$ if and only if 
$g[x, y]=[gx, gy]$ for all $x, y\in\mathfrak{L}\otimes 1$. Choose a homogeneous basis $x_1, \ldots , x_{m+n}$ of 
$\mathfrak{L}$ such that $|x_i|=0$ for $1\leq i\leq m=\dim\mathfrak{L}_0$ and $|x_i|=1$ for $m+1\leq i\leq m+n$.
Then $[x_i, x_j]=\sum_{1\leq k\leq m+n} c_{ijk} x_k$, where $c_{ijk}=0$ whenever $|x_i|+|x_j|\neq |x_k| \pmod 2$.
Therefore $g\in Aut(\mathfrak{L}_a)(A)$ if and only if the following (homogeneous) equation
$$\sum_{1\leq k\leq m+n} c_{ijk}g_{sk}=\sum_{1\leq t, r\leq m+n}(-1)^{|x_r|(|x_i|+|x_t|)}c_{trs}g_{ti}g_{rj}$$
is satisfied for every $i, j, s$. This proves the first statement. 

Using the $\epsilon$-technique it is easy to see that  
$u\in\mathsf{Lie}(Aut(\mathfrak{L}_a))\subseteq gl(\mathfrak{L})$ if and only if $id_{\mathfrak{L}} +\epsilon_0 u_0 +\epsilon_1 u_1
\in Aut(\mathfrak{L}_a)(K[\epsilon_0, \epsilon_1])$.
The latter statement is equivalent to
$$[x, y]+u_0[x, y]\otimes\epsilon_0 +u_1[x, y]\otimes\epsilon_1 =$$
$$[u_0 x, y]\otimes\epsilon_0 +(-1)^{|y|}[u_1 x, y]\otimes\epsilon_1 +[x, u_0 y]\otimes\epsilon_0 +[x, u_1 y]\otimes\epsilon_1.$$ 
Conseqently, $u_0$ is a right even superderivation of $\mathfrak{L}$ and $u_1$ is an odd superderivation of $\mathfrak{L}$.
\end{proof}
\begin{rem}
Proposition \ref{der} can be generalized for a superalgebra with arbitrary $K$-linear operations over a field of any characteristic.
\end{rem}

\section{Second counterexample}

Let us again assume that $char K=0$.  Let $G$ be a connected algebraic group, $\mathfrak{G}=\mathsf{Lie}(G)$, and $V=Kv$ be a one-dimensional $G$-module.
Then $V$ is a $K[G]$-comodule via $v\to v\otimes g$, where $g$ is a group-like element from $K[G]$.

The differential of this action is given by $xv=\alpha(x)v$, where $\alpha\in \mathfrak{G}^*$ is defined as $\alpha(x)=
x(g)$ for $x\in\mathfrak{G}$. Clearly, $\alpha([\mathfrak{G}, \mathfrak{G}])=0$. 

To define a Harish-Chandra pair, it remains to define a Lie superalgebra structure on 
$\mathfrak{L}=\mathfrak{G}\oplus V$ such that $\mathfrak{L}_0=\mathfrak{G}$, $\mathfrak{L}_1 =V$ and
$[x, v]=\alpha(x)v$ for $x\in \mathfrak{G}$. 

It is easy to verify that a bilinear map 
$\mathfrak{L}\times \mathfrak{L}\to \mathfrak{L}$ given by $(x, y)\mapsto [x, y]$ for $x, y\in \mathfrak{L}$
defines such a structure if and only if $\alpha(a)=0$ and $[x, a]=2\alpha(x)a$, where $a=[v,v]$, for $x\in \mathfrak{G}$.
In other words, if there is an element $a\in\mathfrak{G}$ that satisfies these conditions, then there is an algebraic supergroup $L$ such that $L_{ev}\simeq G$ and $\mathsf{Lie}(L)=\mathfrak{L}$. 

If $\alpha\neq 0$, then $a\in\mathfrak{L}'_0$. By Corollary \ref{coincidenceofcommutants} we have $\mathfrak{L}'=\mathsf{Lie}(L')$.
In the case $\alpha=0$ one can choose $a$ to be any central element in $\mathfrak{G}$. In particular, if $a$ is
not algebraic modulo $\mathfrak{G}'$, then $\mathfrak{L}'\neq\mathsf{Lie}(L')$ according to Remark \ref{criterion}.   
Below we give an example of this kind.

Consider a supersubgroup $L$ of $GL(2|2)$ such that for any $C\in \mathsf{SAlg}_K$ the group $L(C)$
consists of all matrices 
$$\left(\begin{array}{cc}
A & B \\
B & A
\end{array}\right)$$
with the even and odd blocks given as 
$$A=\left(\begin{array}{cc} 
\alpha_1 & \alpha_2 \\
0 & \alpha_1
\end{array}\right), B=\left(\begin{array}{cc}
t & (1+\alpha_1^{-1}\alpha_2)t \\
0 & t\end{array}\right),$$
where $\alpha_1\in C_0^{\times}, \alpha_2\in C_0$ and $t\in C_1.$ 

Obsrve that $AB=BA$ and any two matrices $B$ and $B'$, as above, satisfy $BB'=-B'B$.
In what follows we represent each element of $L(C)$ as a row $(A|B)$. In this notation
$$(A|B)(A'|B')=(AA'+BB'|AB'+BA'), \ (A|B)^{-1}=(A^{-1}|-A^{-2}B).$$
\begin{lemma}\label{commutator}
If $(A|B)$ and $(A'|B')$ represent elements from $L(C)$, then
$[(A|B), (A'|B')]=(E+2A^{-1}A'^{-1}BB'|0)$.
\end{lemma}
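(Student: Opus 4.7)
The plan is a direct computation of $XX'X^{-1}X'^{-1}$ with $X=(A|B)$ and $X'=(A'|B')$, using the explicit multiplication rule $(U|V)(U'|V')=(UU'+VV'\,|\,UV'+VU')$ and the inverse formula $(U|V)^{-1}=(U^{-1}|-U^{-2}V)$ stated just before the lemma. No conceptual ingredient beyond these is required; the heart of the matter is a small calculus of the nilpotent odd blocks.

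First I would record the commutation relations forced by the shape of the blocks. The even blocks $A,A'$ are upper-triangular scalars with the same diagonal entry structure, hence commute. Writing $B=tN$ and $B'=t'N'$ with $t,t'\in C_1$ and $N,N'$ having entries in $C_0$, one sees that $A$ and $A'$ commute with both $B$ and $B'$, while $BB'=-B'B$ (from $tt'=-t't$). Crucially, $t^2=0$ gives $B^2=0$, similarly ${B'}^2=0$, and therefore every product of three odd blocks collapses: $BB'B=-B^2B'=0$ and $BB'B'=B{B'}^2=0$. These are the only facts I will use beyond the two displayed formulas.

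Next I would carry out the computation in two stages. Multiplying $XX'=(AA'+BB'\,|\,AB'+BA')$ on the right by $X^{-1}=(A^{-1}|-A^{-2}B)$ and expanding, the even entries $AB'\cdot A^{-2}B=-A^{-1}BB'$ and $BA'\cdot A^{-2}B=A'A^{-2}B^2=0$ collect to give even part $A'+2A^{-1}BB'$, the factor $2$ coming from combining the sign of $BB'=-B'B$ with the sign in the inverse. The odd part similarly telescopes, the cross term $A^{-2}BB'B$ vanishing by the triple-product rule, to leave just $B'$. Hence $XX'X^{-1}=(A'+2A^{-1}BB'\,|\,B')$. Now a second multiplication on the right by $X'^{-1}=(A'^{-1}|-A'^{-2}B')$ kills the odd part entirely (both surviving pieces involve ${B'}^2=0$ or cancel in pairs) and the even part becomes $E+2A^{-1}A'^{-1}BB'$, as claimed.

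The only real obstacle is disciplined bookkeeping: keeping track of which mixed products $BB'$, $B'B$, $B^2$, ${B'}^2$, $BB'B$, $BB'B'$ vanish or change sign, and verifying that the factorisation $B=tN$, $B'=t'N'$ through a single odd scalar each is what underwrites all of these vanishings. Once this is in hand, every other manipulation is routine commutation of even blocks through everything in sight, and the two-step reduction above produces the stated formula.
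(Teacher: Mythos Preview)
Your proposal is correct and is precisely the ``straightforward calculations'' the paper leaves to the reader; you have simply written them out in full, using the same multiplication and inverse formulas together with the commutation and nilpotency identities $AB'=B'A$, $BB'=-B'B$, $B^2={B'}^2=0$ that the explicit block shape forces.
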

\begin{proof}
Proof follows by straightforward calculations.
\end{proof}
It follows from the above Lemma that $L_{ev}$ is central in $L$. 
\begin{lemma}
If $L$ is as above, then $L'=L_{ev}$.
\end{lemma}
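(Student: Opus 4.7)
The plan is first to establish $L'\le L_{ev}$ using Lemma \ref{commutator}, and then to show equality by classifying connected closed subgroups of $L_{ev}$. By direct inspection, $L_{ev}$ consists of the matrices $(A|0)$, and the assignment $(A|0)\mapsto(\alpha_1,\alpha_1^{-1}\alpha_2)$ identifies $L_{ev}$ with $G_m\times G_a$. Lemma \ref{commutator} says that every commutator has zero odd block, so $L'\le L_{ev}$. Since $L_{ev}$ is connected, $L$ is connected (by the criterion $G$ connected $\iff G_{ev}$ connected from Section 3), and hence $L'=L^{(1)}$ is a connected closed supersubgroup of $L_{ev}$.

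In characteristic zero, the only connected closed subgroups of $G_m\times G_a$ are $\{1\}$, $G_m\times\{0\}$, $\{1\}\times G_a$, and $G_m\times G_a$, since there are no nontrivial homomorphisms between $G_m$ and $G_a$. To rule out the three proper options, I would evaluate at the test algebra $C=\Lambda(t,t')$ and take the elements $(E|B),(E|B')\in L(C)$ with $B=\begin{pmatrix}t&t\\ 0&t\end{pmatrix}$ and $B'=\begin{pmatrix}t'&t'\\ 0&t'\end{pmatrix}$. A short computation gives $BB'=\begin{pmatrix}tt'&2tt'\\ 0&tt'\end{pmatrix}$, so by Lemma \ref{commutator} the resulting commutator is $(E+2BB'|0)$ with $\alpha_1=1+2tt'$ and $\alpha_2=4tt'$. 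Since $tt'\ne 0$ in $C$, this commutator is nontrivial, satisfies $\alpha_1\ne 1$, and satisfies $\alpha_2\ne 0$; this rules out containment of $L'$ in $\{1\}$, in $\{1\}\times G_a=\{\alpha_1=1\}$, or in $G_m\times\{0\}=\{\alpha_2=0\}$. Therefore $L'=L_{ev}$.

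The main subtlety worth flagging is that every single commutator has the form $(1+x,2x)$, so individually the commutators satisfy $\alpha_2=2(\alpha_1-1)$ and trace out a curve in $L_{ev}$ which is \emph{not} a subgroup. Trying to describe the subgroup generated by arbitrary products of commutators directly would be awkward; the classification of connected subgroups of $G_m\times G_a$ is what lets one avoid this, reducing the entire question to inspecting the two coordinate projections of a single well-chosen commutator.
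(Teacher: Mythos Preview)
Your proof is correct and follows essentially the same approach as the paper: both show $L'\le L_{ev}\simeq G_m\times G_a$, note that $L'$ is connected, list the possible connected subgroups of $G_m\times G_a$, and then exhibit a commutator that violates both equations $\alpha_1=1$ and $\alpha_2=0$. The only cosmetic differences are that the paper obtains $L'\le L_{ev}$ from the fact that $L/L_{ev}\simeq G_a^-$ is abelian (rather than directly from the commutator formula), and that the paper leaves the explicit commutator computation to the reader while you carry it out in $C=\Lambda(t,t')$.
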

\begin{proof}
Since $L/L_{ev}\simeq G_a^-$, we obtain that $L'\leq L_{ev}$. On the other hand,
$L_{ev}\simeq G_m\times G_a$ and this decomposition corresponds to the Jordan-Chevalley
decomposition 
$$\left(\begin{array}{cc} 
\alpha_1 & \alpha_2 \\
0 & \alpha_1
\end{array}\right)=
\left(\begin{array}{cc} 
\alpha_1 & 0 \\
0 & \alpha_1
\end{array}\right)
\left(\begin{array}{cc} 
1 & \alpha_1^{-1}\alpha_2 \\
0 & 1
\end{array}\right).$$
Since $L'$ is a connected, there are three possibilities, either $L'=G_m, L'=G_a$ or $L'=L_{ev}$. 
In first two cases, the elements of $L'$ should satisfy
either $\alpha_2=0$ or $\alpha_1-1=0$. Using Lemma \ref{commutator} it is easy to find a commutator element that does not satisfy neither
first equation nor second one.   
\end{proof}
The Lie superalgebra $\mathfrak{L}$ is generated by the elements $x=(E|0), y=(E_{12}|0), v=(0|E+E_{12})$ which satisfy the relations $[x, y]=[x, v]=[y, v]=0$ and $a=[v, v]=2x+4y$. 
Therefore $\mathfrak{L}'$ is a proper supersubalgebra of $\mathsf{Lie}(L')$. Additionally, the element $a=2x+4y$ does not generate an algebraic subalgebra of $\mathfrak{L}_0$. 

\section{Reductive and quasireductive supergroups}    
In this section we assume that $K$ is algebraically closed field of characteristic zero.
The following proposition was proved in \cite{zubul}.
\begin{prop}\label{somealgebraic}
Let $\mathfrak{S}$ be either the solvable radical of $\mathfrak{H}$ or a maximal abelian ideal of $\mathfrak{H}$.
Then $\mathfrak{S}$ is algebraic.
\end{prop}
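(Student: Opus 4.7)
By Proposition \ref{description}, it suffices to show that $\mathfrak{S}_0$ is an algebraic subalgebra of $\mathfrak{H}_0 = \mathsf{Lie}(H_{ev})$. The plan is to prove that $\mathfrak{S}_0$ coincides with its own algebraic hull $\widetilde{\mathfrak{S}}_0$ inside $\mathfrak{H}_0$, using the same argument in both cases.

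Let $\widetilde{\mathfrak{S}}_0$ denote the smallest algebraic subalgebra of $\mathfrak{H}_0$ containing $\mathfrak{S}_0$. By Chevalley's theorem on algebraic hulls in characteristic zero, $[\widetilde{\mathfrak{S}}_0, \widetilde{\mathfrak{S}}_0] = [\mathfrak{S}_0, \mathfrak{S}_0]$; in particular, $\widetilde{\mathfrak{S}}_0$ is solvable (respectively, abelian) whenever $\mathfrak{S}_0$ is. Since $\mathfrak{S}_0$ is an ideal of $\mathfrak{H}_0$ and $H_{ev}$ is connected, $\mathfrak{S}_0$ is $\mathsf{Ad}(H_{ev})$-stable, and uniqueness of the algebraic hull makes $\widetilde{\mathfrak{S}}_0$ also $\mathsf{Ad}(H_{ev})$-stable, hence an ideal of $\mathfrak{H}_0$. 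Moreover, the subalgebra $\mathfrak{K} := \{x \in \mathfrak{H}_0 : [x, \mathfrak{H}_1] \subseteq \mathfrak{S}_1\}$ is algebraic, being the $\mathsf{Lie}$-algebra of the kernel of the $H_{ev}$-action on the finite-dimensional module $\mathfrak{H}_1 / \mathfrak{S}_1$ (this action is well defined because the $\mathfrak{H}_0$-stable subspace $\mathfrak{S}_1$ is automatically $H_{ev}$-stable). Since $\mathfrak{S}$ is a superideal we have $\mathfrak{S}_0 \subseteq \mathfrak{K}$, so by minimality of the algebraic hull $\widetilde{\mathfrak{S}}_0 \subseteq \mathfrak{K}$, i.e.\ $[\widetilde{\mathfrak{S}}_0, \mathfrak{H}_1] \subseteq \mathfrak{S}_1$.

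Put $\widetilde{\mathfrak{S}} := \widetilde{\mathfrak{S}}_0 \oplus \mathfrak{S}_1$. Combined with $[\mathfrak{H}_1, \mathfrak{S}_1] \subseteq \mathfrak{S}_0 \subseteq \widetilde{\mathfrak{S}}_0$ (again by the superideal property), the facts above show that $\widetilde{\mathfrak{S}}$ is a superideal of $\mathfrak{H}$. In the solvable radical case, Kac's theorem (a finite-dimensional Lie superalgebra over a field of characteristic zero is solvable if and only if its even part is) gives at once that $\widetilde{\mathfrak{S}}$ is solvable. In the maximal abelian case, one additionally checks $[\widetilde{\mathfrak{S}}_0, \mathfrak{S}_1] = 0$ by the same algebraic-hull argument: for each $v \in \mathfrak{S}_1$ the centralizer $Z_{\mathfrak{H}_0}(v) = \mathsf{Lie}(\mathrm{Stab}_{H_{ev}}(v))$ is algebraic and contains $\mathfrak{S}_0$, hence contains $\widetilde{\mathfrak{S}}_0$; together with $[\mathfrak{S}_1, \mathfrak{S}_1] = 0$ this makes $\widetilde{\mathfrak{S}}$ abelian. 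The maximality of $\mathfrak{S}$ within its class now forces $\widetilde{\mathfrak{S}} = \mathfrak{S}$, whence $\mathfrak{S}_0 = \widetilde{\mathfrak{S}}_0$ is algebraic.

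The two classical ingredients doing the real work here are Chevalley's description of algebraic hulls (propagating solvability, abelianness, and the ideal property to $\widetilde{\mathfrak{S}}_0$) and Kac's criterion for super-solvability in characteristic zero (closing the radical case, where one must upgrade solvability of the even part to solvability of the whole superalgebra). The main obstacle I anticipate is keeping track that each auxiliary subalgebra one wants to intersect with, namely $\mathfrak{K}$ and the centralizers $Z_{\mathfrak{H}_0}(v)$, is genuinely algebraic; in each case this reduces to exhibiting it as the $\mathsf{Lie}$-algebra of a closed subgroup of $H_{ev}$ arising from its action on a finite-dimensional $H_{ev}$-module.
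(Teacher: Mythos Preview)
The paper does not actually prove this proposition; it merely records the statement and attributes the proof to the cited preprint \cite{zubul}. So there is no in-paper argument to compare against.

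Your argument is correct and fits naturally into the framework of Section~9. The reduction to the even part via Proposition~\ref{description}, followed by the algebraic-hull-plus-maximality argument, is a clean way to handle both cases at once. The two classical inputs are used correctly: Chevalley's theorem guarantees $[\widetilde{\mathfrak{S}}_0,\widetilde{\mathfrak{S}}_0]=[\mathfrak{S}_0,\mathfrak{S}_0]$, so solvability and abelianness pass to the hull; and Kac's criterion (a finite-dimensional Lie superalgebra in characteristic zero is solvable iff its even part is) closes the solvable-radical case. The checks that the auxiliary subalgebras $\mathfrak{K}=\{x\in\mathfrak{H}_0:[x,\mathfrak{H}_1]\subseteq\mathfrak{S}_1\}$ and $Z_{\mathfrak{H}_0}(v)$ are algebraic are exactly right: in each case the subalgebra is the Lie algebra of a kernel or stabilizer for an $H_{ev}$-action on a finite-dimensional module, and $\mathfrak{S}_1$ is indeed an $H_{ev}$-submodule because $H_{ev}$ is connected and $\mathrm{char}\,K=0$. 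The $\mathsf{Ad}(H_{ev})$-stability argument for $\widetilde{\mathfrak{S}}_0$ being an ideal of $\mathfrak{H}_0$ is also sound, since automorphisms coming from $\mathsf{Ad}$ send algebraic subalgebras to algebraic subalgebras and hence preserve the hull of an $\mathsf{Ad}$-stable subalgebra.
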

A connected normal supersubgroup $R=R(H)$ of $H$ for which $\mathfrak{R}=\mathsf{Lie}(R)$ is the solvable radical of $\mathfrak{H}$ is called a {\it solvable radical} of $H$. The supergroup $R(H)$ is actually solvable (cf. \cite{zubul}).
\begin{lemma}\label{unipradofsolv}
If $H$ is a reductive supergroup that does not contain non-trivial central toruses, then $R(H)=1$ and $\mathsf{Lie}(H)=\mathfrak{H}$ is semisimple.  
\end{lemma}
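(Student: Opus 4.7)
The plan is to argue by contradiction: assuming $R:=R(H)\neq 1$, I extract a non-trivial normal torus in $H$ which, by the rigidity of diagonalizable supergroups, must be central, contradicting the hypothesis. The semisimplicity of $\mathfrak{H}$ will then follow directly from the definition of the solvable radical.

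One may assume $H$ is connected, since $R(H)=R(H^0)$ (both are characterized by their common Lie algebra, the solvable radical of $\mathfrak{H}=\mathsf{Lie}(H^0)$) and $H_u=(H^0)_u$. Assuming $R\neq 1$, the derived series $R\supseteq R'\supseteq R''\supseteq\cdots$ reaches $1$ after finitely many steps; let $A:=R^{(s-1)}$ be its last non-trivial term. Then $A$ is non-trivial, connected (each iterated commutator supersubgroup of a connected supergroup is connected), and abelian, since $A'=R^{(s)}=1$. Because the commutator supersubgroup is characteristic and $R\lhd H$, every term of the derived series, and in particular $A$, is normal in $H$.

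Next I would apply Proposition \ref{structureofabelian} to the connected abelian supergroup $A$ to obtain a decomposition $A\simeq T\times U$ with $T$ a torus and $U\simeq G_a^k\times (G_a^-)^t$ unipotent; the factor $U$ is the unipotent radical of $A$, hence characteristic in $A$ and therefore normal in $H$. Reductivity of $H$ then forces $U\leq H_u=1$, so that $A=T$ is a torus normal in $H$. The conjugation action of the connected supergroup $H$ on the diagonalizable supergroup $T$ is an action by supergroup automorphisms, and by Lemma \ref{rigidtheorem} it must be trivial. This means $T\leq Z(H)$ is a non-trivial central torus of $H$, contradicting the hypothesis. Hence $R(H)=1$, and since by definition $\mathsf{Lie}(R(H))$ is the solvable radical of $\mathfrak{H}$, the Lie superalgebra $\mathfrak{H}$ has vanishing solvable radical and is therefore semisimple.

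The point I expect to require the most care is the functorial, closed-supersubgroup version of the two characteristic constructions used above: one must check that the derived supersubgroups $R^{(k)}$ are stable under the conjugation action of $H$ (not merely on $A$-valued points) and that the same holds for the unipotent factor $U$ in the decomposition $A\simeq T\times U$, so that these inclusions really are $H$-equivariant inclusions of supersubgroups. Once this functoriality is in hand, the argument is a clean combination of Proposition \ref{structureofabelian} with the rigidity Lemma \ref{rigidtheorem}.
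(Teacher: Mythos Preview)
Your argument is correct and follows essentially the same route as the paper. The paper's one-line proof simply invokes Theorem~\ref{actiononabelian} on the last non-trivial term $R(H)^{(k)}$ of the derived series; you unpack that reference into its two ingredients, namely Proposition~\ref{structureofabelian} for the decomposition $A\simeq T\times U$ and Lemma~\ref{rigidtheorem} for the triviality of the $H$-action on $T$, and you also make explicit the dichotomy (if $U\neq 1$ one contradicts reductivity, if $T\neq 1$ one contradicts the no-central-torus hypothesis) that the paper leaves to the reader.
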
 
\begin{proof}
Since the supersubgroup $R(H)^{(k)}$, for sufficiently large positive integer $k$, is non-trivial, normal, connected and abelian, we can apply Theorem \ref{actiononabelian}.   
\end{proof}
\begin{lemma}\label{directsum}
Let $G$ be a connected algebraic supergroup. Suppose that its Lie superalgebra $\mathfrak{G}$ is a direct sum of centerless superideals 
$\mathfrak{G}_i$ for $1\leq i\leq t$. Then each $\mathfrak{G}_i$ is algebraic in $\mathfrak{G}$. 
\end{lemma}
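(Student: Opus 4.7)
The plan is to invoke Proposition \ref{description}, which reduces the claim to showing that each $(\mathfrak{G}_i)_0$ is an algebraic subalgebra of $\mathfrak{G}_0 = \mathsf{Lie}(G_{ev})$. I will accomplish this by realizing $(\mathfrak{G}_i)_0$ as the Lie algebra of a pointwise stabilizer inside the (classical) algebraic group $G_{ev}$, acting on $\mathfrak{G}$ via the adjoint representation supplied by Lemma \ref{AdcommuteswithLieoperation}.

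Set $W_i = \bigoplus_{j\neq i}\mathfrak{G}_j$. Because the $\mathfrak{G}_j$ are superideals with pairwise zero intersections, one has $[\mathfrak{G}_i,\mathfrak{G}_j]\subseteq \mathfrak{G}_i\cap\mathfrak{G}_j = 0$ whenever $i\neq j$. The first step is to verify that
$$(\mathfrak{G}_i)_0 = \{x\in\mathfrak{G}_0 \mid [x, W_i] = 0\}.$$
The inclusion $(\subseteq)$ is immediate from the bracket vanishing just noted. For $(\supseteq)$, decompose $x = \sum_k x_k$ with $x_k\in(\mathfrak{G}_k)_0$; then for each $j\neq i$ the hypothesis $[x, \mathfrak{G}_j]=0$ collapses to $[x_j, \mathfrak{G}_j] = 0$, which places $x_j$ in $Z(\mathfrak{G}_j) = 0$. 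Centerlessness of each $\mathfrak{G}_j$ is used precisely at this point, and cannot be removed.

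Next, consider the homomorphism $\mathsf{Ad}: G_{ev}\to GL(\mathfrak{G}_0)\times GL(\mathfrak{G}_1)$ of ordinary algebraic groups obtained from Lemma \ref{AdcommuteswithLieoperation}. Pick a basis $w_1,\ldots,w_r$ of the finite-dimensional subspace $W_i$ and form the pointwise stabilizer
$$S_i = \{g\in G_{ev} \mid \mathsf{Ad}(g)(w_s)=w_s, \ s=1,\ldots,r\}.$$
This is a closed subgroup of $G_{ev}$, defined as the intersection of finitely many closed conditions in the matrix coefficients of $\mathsf{Ad}$. By the standard computation of the Lie algebra of a pointwise stabilizer under a rational representation in characteristic zero, $\mathsf{Lie}(S_i)$ equals $\{x\in\mathfrak{G}_0 \mid [x, W_i]=0\}$, which by the preceding step is exactly $(\mathfrak{G}_i)_0$. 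Hence $(\mathfrak{G}_i)_0$ is algebraic in $\mathfrak{G}_0$, and Proposition \ref{description} yields a supersubgroup of $G$ whose Lie superalgebra is $\mathfrak{G}_i$.

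The main conceptual step is the centralizer description of $(\mathfrak{G}_i)_0$; once it is in hand, the transition to an algebraic subgroup of $G_{ev}$ is a routine application of the theory of rational representations, and the passage from $(\mathfrak{G}_i)_0$ to $\mathfrak{G}_i$ is precisely what Proposition \ref{description} is designed to provide.
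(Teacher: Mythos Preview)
Your argument is correct. Both your proof and the paper's rest on the same centralizer identity---that the centralizer of $W_i=\bigoplus_{j\ne i}\mathfrak G_j$ in $\mathfrak G$ (or in $\mathfrak G_0$) equals $\mathfrak G_i$ (resp.\ $(\mathfrak G_i)_0$) by centerlessness---and both then realize this centralizer as the Lie algebra of a stabilizer under the adjoint action. The execution differs: the paper first invokes the fact that $W_i$, being $\mathfrak G$-stable, is a $G$-supersubmodule (since $G$ is connected), and then defines $G_i$ directly as the kernel of the induced supergroup morphism $G\to GL(W_i)$, computing $\mathsf{Lie}(G_i)=\mathfrak G_i$ at one stroke. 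You instead drop to $G_{ev}$, take a pointwise stabilizer there to exhibit $(\mathfrak G_i)_0$ as algebraic, and then appeal to Proposition~\ref{description} to lift. Your route avoids checking that $W_i$ is $G$-stable, but it is slightly less direct and does not by itself produce a \emph{normal} supersubgroup of $G$; the paper's construction gives $G_i$ as a kernel, hence normal, which is exactly what is used in Remark~\ref{groupproduct} and later in Proposition~\ref{resume}. (A minor point: the adjoint representation $G\to GL(\mathfrak G)$ is set up before Lemma~\ref{AdcommuteswithLieoperation}; that lemma concerns compatibility with the bracket, which your stabilizer argument does not actually need.)
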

\begin{proof}
Fix an index $i$. Lemma 9.4 of \cite{zub} implies that $\oplus_{j\neq i}\mathfrak{G}_j$ is an $G$-supersubmodule
of $\mathfrak{G}$ with respect to the adjoint action of $G$. Denote the kernel of the induced supergroup morphism
$G\to GL(\oplus_{j\neq i}\mathfrak{G}_j)$ by $G_i$.
By Lemma 3.4 and Proposition 9.1 of \cite{zub}, the superalgebra
$\mathsf{Lie}(G_i)$ equals $\oplus_{j\neq i} Z(\mathfrak{G}_j)\oplus \mathfrak{G}_i=\mathfrak{G}_i$. 
\end{proof}
\begin{rem}\label{groupproduct}
According to Lemma \ref{normal}, each $G_i$ is normal in $G$ and for any pair of indexes $i\neq j$ the supersubgroup
$G_i\bigcap G_j$ is finite. Moreover, the induced morphism $\pi : G_1\times\ldots\times G_t\to G$ is a {\it quasi-isomorphism},
that is, $\pi$ is surjective and $\ker\pi$ is finite. 
\end{rem}
We will review the classification of semi-simple Lie superalgebras from \cite{kac}. 
Fix a collection of simple Lie superalgebras $\mathfrak{U}_1, \ldots , \mathfrak{U}_t$ and consider the Lie superalgebra
$\mathfrak{U}=\oplus_{1\leq i\leq t} \mathfrak{U}_i\otimes Sym(n_i)$, where
$Sym(n)$ is the symmetric superalgebra defined on a superspace of superdimension $0|n$.  

It is known that
$$Der(\mathfrak{U})=\oplus_{1\leq i\leq t}((
Der(\mathfrak{U}_i)\otimes Sym(n_i))\oplus (id_{\mathfrak{U}_i}\otimes Der(Sym(n_i))),$$ 
where
$$(x\otimes a)(\delta_1\otimes b)=(-1)^{|a||\delta_1|}x\delta_1\otimes ab, \quad (x\otimes a)(id_{\mathfrak{U}_i}\otimes\delta_2)=x\otimes a\delta_2,$$
$\delta_1\in Der(\mathfrak{U}_i), \delta_2\in Der(Sym(n_i)), x\in \mathfrak{U}_i, a, b\in Sym(n_i)$ and $1\leq i\leq t$.
If $z_1, \ldots, z_n$ are free (odd) generators of $Sym(n)$, then $Der(Sym(n))$ 
consists of all (right) superderivations 
$\sum_{1\leq i\leq n}\frac{d}{dz_i}f_i$ for $f_i\in Sym(n)$.
Moreover, $Der(Sym(n))$ has a consistent $\mathsf{Z}$-grading where
$$Der(Sym(n))(k)=\{\sum_{1\leq i\leq n}\frac{d}{dz_i}f_i|
f_i\in Sym(n), \deg f_i=k+1, 1\leq i\leq n\}$$
for each $k\in\mathsf{Z}$.
 
The superalgebra $\mathfrak{U}$ is identified with the superalgebra of
inner derivations $Inder(\mathfrak{U})\subseteq Der(\mathfrak{U})$. If a Lie
supersubalgebra $\mathfrak{H}\subseteq Der(\mathfrak{U})$ contains $\mathfrak{U}$, then $\mathfrak{H}$ is
semisimple if and only if the projection of $\mathfrak{H}$ to $id_{\mathfrak{U}_i}\otimes
Der(Sym(n_i))(-1)$ is onto for each $i$. Moreover, any
semisimple superalgebra $\mathfrak{H}$ is obtained this way.

Let $\mathfrak{V}$ be a simple Lie superalgebra. The following standard formulas are valid in
every $Der(\mathfrak{V}\otimes Sym(n))$.
\begin{enumerate}
\item $[\delta_1\otimes a, \delta'_1\otimes a']=(-1)^{|a||\delta'_1|}[\delta_1, \delta'_1]\otimes aa'$ for 
$\delta_1, \delta'_1\in Der(\mathfrak{V})$ and $a, a'\in Sym(n)$;
\item $[\delta_1\otimes a, id_{\mathfrak{V}}\otimes\delta_2 ]=\delta_1\otimes a\delta_2$ for $\delta_1\in Der(\mathfrak{V})$,
$\delta_2\in Der(Sym(n))$ and $a\in Sym(n);$
\item $[id_{\mathfrak{V}}\otimes\delta_2, id_{\mathfrak{V}}\otimes\delta'_2]=id_{\mathfrak{V}}\otimes [\delta_2, \delta'_2]$ for
$\delta_2, \delta'_2\in Der(Sym(n))$.
\end{enumerate} 
\begin{lemma}\label{firstreduction}
Let $\mathfrak{V}$ be a simple Lie superalgebra. Then the superalgebra $Der(\mathfrak{V})\otimes Sym(n)$ is algebraic in $Der(\mathfrak{V}\otimes Sym(n))$.
\end{lemma}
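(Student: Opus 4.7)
The plan is to exhibit a closed algebraic supersubgroup $L$ of $H:=Aut((\mathfrak{V}\otimes Sym(n))_a)$ with $\mathsf{Lie}(L)=Der(\mathfrak{V})\otimes Sym(n)$; note that $H$ is algebraic and $\mathsf{Lie}(H)=Der(\mathfrak{V}\otimes Sym(n))$ by Proposition~\ref{der}.

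First I define $L$ as the subfunctor of $H$ given for $B\in\mathsf{SAlg}_K$ by
\[L(B)=\{g\in H(B)\mid g(a\cdot v)=a\cdot g(v)\text{ for all }a\in Sym(n),\ v\in(\mathfrak{V}\otimes Sym(n))\otimes B\}\]
(no signs are needed because $g$ is even). Since $Sym(n)$ is generated as a superalgebra by the odd generators $z_1,\ldots,z_n$ and $g$ is automatically $B$-linear, this condition reduces to the $n$ identities $g(z_i\cdot v)=z_i\cdot g(v)$, $i=1,\ldots,n$. In the matrix coordinates of $H$ used in the proof of Proposition~\ref{der}, these are $K$-linear conditions on the entries of $g$, so $L$ is closed in $H$; composition and inversion preserve $Sym(n)$-linearity, so $L$ is a supersubgroup, and hence an algebraic supersubgroup of $H$.

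Next I identify $\mathsf{Lie}(L)$. Using the $\epsilon$-technique from the proof of Proposition~\ref{der}, an element $u=u_0+u_1\in Der(\mathfrak{V}\otimes Sym(n))$ lies in $\mathsf{Lie}(L)$ iff $id+\epsilon_0 u_0+\epsilon_1 u_1\in H(K[\epsilon_0,\epsilon_1])$ satisfies the defining identities of $L$. Substituting and comparing coefficients of $\epsilon_0$ and $\epsilon_1$, paying attention to the sign incurred when $\epsilon_1$ commutes past the odd $z_i$, yields that $u_0$ is $Sym(n)$-linear and $u_1$ satisfies the super-linearity relation $u_1(a\cdot v)=(-1)^{|a|}a\cdot u_1(v)$. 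Hence $\mathsf{Lie}(L)$ is precisely the space of super-$Sym(n)$-linear superderivations of $\mathfrak{V}\otimes Sym(n)$.

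It remains to match this space with $Der(\mathfrak{V})\otimes Sym(n)$ embedded into $Der(\mathfrak{V}\otimes Sym(n))$ via formula~(1) preceding the lemma. The inclusion $Der(\mathfrak{V})\otimes Sym(n)\subseteq\mathsf{Lie}(L)$ is immediate from that formula. Conversely, a super-$Sym(n)$-linear superderivation $D$ is determined by $D|_{\mathfrak{V}\otimes 1}$ and may therefore be written uniquely as $\sum_j\delta_j\otimes a_j$ with $\{a_j\}$ a homogeneous $K$-basis of $Sym(n)$ and homogeneous $\delta_j\in\mathsf{End}_K(\mathfrak{V})$. Inserting this expression into the super-derivation identity applied to $[v\otimes 1,w\otimes 1]$ for $v,w\in\mathfrak{V}$ and equating coefficients of the linearly independent $a_j$ forces the super-derivation identity for each $\delta_j$, so $\delta_j\in Der(\mathfrak{V})$, as required. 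The genuine work in the argument is the sign bookkeeping in the $\epsilon$-calculation and in the super-derivation identity; everything else is formal. Note in particular that simplicity of $\mathfrak{V}$ plays no role here --- it will enter the larger discussion only through Kac's description of $Der(\mathfrak{V}\otimes Sym(n))$ recalled above the lemma.
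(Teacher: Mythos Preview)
Your proof is correct and takes a genuinely different route from the paper's. The paper argues via the adjoint representation on a quotient, in the spirit of Lemma~\ref{directsum}: using Kac's decomposition (recalled just above the lemma, and valid because $\mathfrak{V}$ is simple), $Der(\mathfrak{V})\otimes Sym(n)$ is a superideal of $Der(\mathfrak{V}\otimes Sym(n))$ with quotient $id_{\mathfrak{V}}\otimes Der(Sym(n))$; the kernel $R$ of the induced map $H\to GL(Der(Sym(n)))$ then has $\mathsf{Lie}(R)$ equal to the preimage of the center of $Der(Sym(n))$, and since that center is trivial one gets $\mathsf{Lie}(R)=Der(\mathfrak{V})\otimes Sym(n)$. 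You instead exploit the $Sym(n)$-module structure directly, cutting out the closed supersubgroup of $Sym(n)$-linear automorphisms and identifying its Lie algebra with the super-$Sym(n)$-linear derivations, which a freeness argument shows is exactly $Der(\mathfrak{V})\otimes Sym(n)$. Your approach is more elementary and, as you correctly observe, does not require $\mathfrak{V}$ to be simple; the paper's approach stays within its established pattern of taking kernels of adjoint-type representations but leans on Kac's structural theorem both to see that $Der(\mathfrak{V})\otimes Sym(n)$ is an ideal and to identify the quotient with $Der(Sym(n))$.
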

\begin{proof}
Let $H$ be an algebraic supergroup such that $\mathsf{Lie}(H)=Der(\mathfrak{V}\otimes Sym(n))$. Since $Der(\mathfrak{V})\otimes Sym(n)$ is a superideal of $Der(\mathfrak{V}\otimes Sym(n))$, one can argue as in Lemma \ref{directsum} to define the induced morphism 
$$H\to GL(Der(\mathfrak{V}\otimes Sym(n))/(Der(\mathfrak{V})\otimes Sym(n)))=GL(Der(Sym(n))).$$
Denote the kernel of this morphism by $R$. Then the superalgebra \newline \noindent
$\mathsf{Lie}(R)/(Der(\mathfrak{V})\otimes Sym(n))$ is isomorphic to the center of $Der(Sym(n))$, which is trivial. Thus $\mathsf{Lie}(R)=Der(\mathfrak{V})\otimes Sym(n)$.
\end{proof}
\begin{lemma}\label{firstalg}
Let $\mathfrak{V}$ be a Lie superalgebra. If $\mathfrak{V}_0 =\mathfrak{V}'_0$, then $\mathfrak{V}\otimes Sym(n)$ is algebraic in $Der(\mathfrak{V}\otimes Sym(n))$. 

In particular, the conclusion holds for any simple superalgebra $\mathfrak{V}$.
\end{lemma}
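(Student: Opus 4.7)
The plan is to apply Proposition \ref{description} inside the algebraic supergroup $R$ from Lemma \ref{firstreduction}, which has $\mathsf{Lie}(R)=Der(\mathfrak{V})\otimes Sym(n)$ and contains $\mathfrak{V}\otimes Sym(n)$ as a superideal via inner derivations $x\otimes a\mapsto \mathrm{ad}(x)\otimes a$ (this map is injective precisely when $\mathfrak{V}$ is centerless, which covers the simple case). By Proposition \ref{description} it suffices to produce a closed algebraic subgroup of $R_{ev}$ whose Lie algebra is the even part
\[L := (\mathfrak{V}\otimes Sym(n))_0 = \mathfrak{V}_0\otimes Sym(n)_0 + \mathfrak{V}_1\otimes Sym(n)_1.\]

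I would decompose $L = (\mathfrak{V}_0\otimes 1)\oplus L_+$, where
\[L_+ := \mathfrak{V}_0\otimes Sym(n)_0^+ + \mathfrak{V}_1\otimes Sym(n)_1.\]
Since the augmentation ideal $Sym(n)^+$ of the Grassmann algebra is nilpotent, $L_+$ is a nilpotent Lie ideal of $L$, and every element of $L_+$ acts ad-nilpotently on $\mathsf{Lie}(R)$ because each application of $\mathrm{ad}$ multiplies the $Sym(n)$-component by an element of $Sym(n)^+$. In characteristic zero this integrates, via the exponential construction, to a closed unipotent algebraic subgroup $U\leq R_{ev}$ with $\mathsf{Lie}(U)=L_+$.

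For the complementary piece, the hypothesis $\mathfrak{V}_0 = \mathfrak{V}'_0$ combined with Corollary \ref{evenpartcoincideswithitscommutant} applied to the supersubalgebra $\mathfrak{V}\otimes 1 \subseteq \mathsf{Lie}(R)$ produces a connected supersubgroup $G\leq R$ with $\mathsf{Lie}(G) = \mathfrak{V}\otimes 1$; its even part $G_{ev}$ is a closed algebraic subgroup of $R_{ev}$ with $\mathsf{Lie}(G_{ev}) = \mathfrak{V}_0\otimes 1$. Because $[\mathfrak{V}_0\otimes 1, L_+]\subseteq L_+$ and both $G_{ev}$ and $U$ are connected, $G_{ev}$ normalizes $U$, so $S_{ev} := G_{ev}\cdot U$ is a closed algebraic subgroup of $R_{ev}$. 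Lemma \ref{productofnormalandsome} then gives $\mathsf{Lie}(S_{ev}) = \mathfrak{V}_0 + L_+ = L$, so $L$ is algebraic in $\mathsf{Lie}(R_{ev})$. Applying Proposition \ref{description} supplies a supersubgroup $S\leq R$ with $\mathsf{Lie}(S) = \mathfrak{V}\otimes Sym(n)$, which proves the first assertion. The ``in particular'' clause is immediate: every simple Lie superalgebra is perfect, so $\mathfrak{V}_0 = \mathfrak{V}'_0$ automatically.

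The most delicate point I anticipate is confirming that the nilpotent subalgebra $L_+$ of ad-nilpotent elements integrates to a genuinely \emph{closed} unipotent algebraic subgroup of $R_{ev}$, rather than merely a formal or abstract unipotent group; in characteristic zero this rests on the standard exponential machinery together with the fact that the nilpotency indices are bounded uniformly by the relation $(Sym(n)^+)^{n+1}=0$ in the Grassmann algebra, which makes the Baker--Campbell--Hausdorff series terminate inside $R_{ev}$ itself.
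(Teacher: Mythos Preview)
Your argument takes a considerably longer route than the paper's. The paper observes that, since $Sym(n)$ is unital, $(\mathfrak{V}\otimes Sym(n))' = \mathfrak{V}'\otimes Sym(n)$; under the perfectness assumption this gives $(\mathfrak{V}\otimes Sym(n))'_0 = (\mathfrak{V}\otimes Sym(n))_0$ at once, and Corollary~\ref{evenpartcoincideswithitscommutant} applies directly to the whole superalgebra $\mathfrak{V}\otimes Sym(n)$ inside $Der(\mathfrak{V}\otimes Sym(n))$. There is no need to split $L$ into $\mathfrak{V}_0\otimes 1$ and a nilpotent piece, integrate the latter by hand via exponentials, and reassemble with Lemma~\ref{productofnormalandsome}. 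Your route is not wrong in outline, but it introduces several verifications (closedness of the exponential image, normality of $U$ in $R_{ev}$, applicability of Lemma~\ref{productofnormalandsome}) that the one-line application of Corollary~\ref{evenpartcoincideswithitscommutant} avoids entirely.

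There is also a genuine gap in your first step: you invoke the supergroup $R$ from Lemma~\ref{firstreduction}, but that lemma is stated and proved only for \emph{simple} $\mathfrak{V}$, whereas the first assertion of the present lemma concerns an arbitrary Lie superalgebra with $\mathfrak{V}_0 = \mathfrak{V}'_0$. You should instead work directly inside $Aut((\mathfrak{V}\otimes Sym(n))_a)$, whose existence is guaranteed by Proposition~\ref{der} without any simplicity hypothesis. For the ``in particular'' clause your remark that a simple Lie superalgebra is perfect (so $\mathfrak{V}'_0 = \mathfrak{V}_0$ automatically) is correct and in fact more economical than the case-by-case verification via the classification that the paper carries out.
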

\begin{proof}
If $\mathfrak{V}=\mathfrak{V}'$, then $[\mathfrak{V}_0 , \mathfrak{V}_1]=\mathfrak{V}_1$. 
Therefore $(\mathfrak{V}\otimes Sym(n))'_0 =(\mathfrak{V}\otimes Sym(n))_0$ and the first statement follows by Corollary \ref{evenpartcoincideswithitscommutant}. 

Using Table III on p.13 of \cite{frsorsci}, we see that the second statement follows immediately for all cases except 
$A(m, n), C(n+1), W(n), S(n), \tilde{S}(n)$ and $H(n)$. 

In the cases $A(m, n), C(n+1), W(n)$ and $\tilde{S}(n)$, Proposition 5.1.2 of \cite{kac} shows that $Der(\mathfrak{V})=\mathfrak{V}$ and our claim follows by Lemma \ref{firstreduction}. 

If $\mathfrak{V}$ is of type $S(n)$ or $H(n)$, then
$\mathfrak{V}_0=\mathfrak{G}\oplus\mathfrak{R}$, where $\mathfrak{G}$ is isomorphic to $sl(n)$ or $so(n)$ and the solvable radical $\mathfrak{R}$ is a direct sum of non-trivial simple $\mathfrak{G}$-modules (see \cite{frsorsci}, p.6-7). 
Thus $\mathfrak{G}'=\mathfrak{G}$ and $[\mathfrak{G}, \mathfrak{R}]=\mathfrak{R}$, and this implies $\mathfrak{V}'_0=\mathfrak{V}_0$. 
\end{proof}
\begin{prop}\label{resume}
The superalgebra $\mathfrak{U}$, defined above, is algebraic in $Der(\mathfrak{U})$.
\end{prop}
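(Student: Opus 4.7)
The plan is to reduce the proposition to the individual simple factors by exploiting the natural direct sum decomposition of $Der(\mathfrak{U})$, and then glue the resulting algebraic supersubgroups back together inside an ambient algebraic supergroup supplied by Proposition \ref{der}.

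First I would observe that the formula for $Der(\mathfrak{U})$ stated before Lemma \ref{firstreduction}, together with the bracket formulas (1)--(3), gives the Lie superalgebra decomposition
$$Der(\mathfrak{U})=\bigoplus_{1\leq i\leq t}Der(\mathfrak{U}_i\otimes Sym(n_i)),$$
where the $i$-th summand consists of the derivations of $\mathfrak{U}$ that vanish on $\mathfrak{U}_j\otimes Sym(n_j)$ for all $j\neq i$. Derivations from distinct summands act on disjoint blocks and therefore supercommute outright, so each $Der(\mathfrak{U}_i\otimes Sym(n_i))$ is a superideal of $Der(\mathfrak{U})$. Next I would check that each of these summands is centerless: since $\mathfrak{U}_i$ is simple, the center of $\mathfrak{U}_i\otimes Sym(n_i)$ is trivial, so for any $D$ in the center of $Der(\mathfrak{U}_i\otimes Sym(n_i))$ and any $y$ in $\mathfrak{U}_i\otimes Sym(n_i)$, the equality $[D,\mathrm{ad}(y)]=\mathrm{ad}(D(y))=0$ forces $D(y)=0$, whence $D=0$.

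Lemma \ref{directsum} then applies to the above decomposition and shows that each $Der(\mathfrak{U}_i\otimes Sym(n_i))$ is algebraic in $Der(\mathfrak{U})$. Fixing a connected algebraic supergroup $H$ with $\mathsf{Lie}(H)=Der(\mathfrak{U})$ (which exists by Proposition \ref{der}), let $K_i\leq H$ be a connected supersubgroup realising $Der(\mathfrak{U}_i\otimes Sym(n_i))$. Inside $K_i$, Lemma \ref{firstalg} applied to the simple superalgebra $\mathfrak{U}_i$ yields a connected supersubgroup $S_i\leq K_i\leq H$ with $\mathsf{Lie}(S_i)=\mathfrak{U}_i\otimes Sym(n_i)$. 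Because $\mathfrak{U}_i\otimes Sym(n_i)$ is the superideal of inner derivations of $Der(\mathfrak{U}_i\otimes Sym(n_i))$ and is annihilated by every other summand, it is in fact a superideal of the whole $Der(\mathfrak{U})$; Lemma \ref{normal} then shows that $S_i\unlhd H$.

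Finally, an iterated application of Lemma \ref{productofnormalandsome} gives
$$\mathsf{Lie}(S_1 S_2\cdots S_t)=\sum_{1\leq i\leq t}\mathsf{Lie}(S_i)=\bigoplus_{1\leq i\leq t}\mathfrak{U}_i\otimes Sym(n_i)=\mathfrak{U},$$
so $S_1\cdots S_t$ is the desired algebraic supersubgroup of $H$ realising $\mathfrak{U}$. The only non-routine step I expect is the verification that the stated decomposition of $Der(\mathfrak{U})$ really is a direct sum of centerless superideals in the technical sense required by Lemma \ref{directsum}; once this is in place, the rest of the argument is a direct assembly from Lemmas \ref{directsum}, \ref{firstalg}, \ref{normal}, and \ref{productofnormalandsome}.
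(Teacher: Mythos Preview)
Your proposal is correct and follows essentially the same route as the paper: verify that the summands $Der(\mathfrak{U}_i\otimes Sym(n_i))$ and $\mathfrak{U}_i\otimes Sym(n_i)$ are centerless superideals, invoke Lemma \ref{directsum} for the outer summands and Lemma \ref{firstalg} for the inner ones, and then multiply the resulting normal supersubgroups using Lemma \ref{productofnormalandsome}. The paper is terser (it simply asserts that Lemmas \ref{directsum} and \ref{firstalg} yield normal $G_i$ and $U_i$ without spelling out the appeal to Lemma \ref{normal} for the normality of the $U_i$), but the logical content is identical.
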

\begin{proof}
Let $G$ be an algebraic supergroup such that $\mathsf{Lie}(G)=Der(\mathfrak{U})$. 
Using the formulas (1)-(3) it can be easily shown that both $Der(\mathfrak{U}_i\otimes Sym(n_i))$ and  $\mathfrak{U}_i\otimes Sym(n_i)$ do not have non-zero central elements. By Lemmas \ref{directsum} and \ref{firstalg}, there are normal supersubgroups
$U_i$ and $G_i$ of $G$ such that $\mathsf{Lie}(G_i)=Der(\mathfrak{U}_i\otimes Sym(n_i))$ and $\mathsf{Lie}(U_i)=\mathfrak{U}_i\otimes Sym(n_i)$ for $1\leq i\leq t$. 
It remains to set $U=U_1\ldots U_t$ and use Lemma \ref{productofnormalandsome}. 
\end{proof}
Let us call the pair of supergroups $(U, G)$, where $\mathsf{Lie}(U)=\mathfrak{U}$, $\mathsf{Lie}(G)=Der(\mathfrak{U})$ and $\mathfrak{U}$ as previously defined,
a {\it sandwich pair}. We say that a connected algebraic supergroup $H$ is inserted into a sandwich pair $(U, G)$ if there is a supergroup morphism $\phi : H\to G$ such that $U\leq\phi(H)$ and $\ker\phi$ is finite. 
\begin{theorem}\label{sandwich}
Let $H$ be a supergroup such that $\mathfrak{H}=\mathsf{Lie}(H)$ is semisimple. Then $H$ is inserted into a sandwich pair. 
\end{theorem}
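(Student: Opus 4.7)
The plan is to construct the sandwich pair $(U,G)$ directly from Kac's structural description of semisimple Lie superalgebras recalled before Lemma \ref{firstreduction}, and then obtain the morphism $\phi\colon H\to G$ by letting $H$ act adjointly on the distinguished ideal $\mathfrak{U}\subseteq\mathfrak{H}$.

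Since $\mathfrak{H}$ is semisimple, Kac's theorem provides an ideal $\mathfrak{U}=\bigoplus_{i=1}^{t}\mathfrak{U}_i\otimes Sym(n_i)$ with the $\mathfrak{U}_i$ simple, such that $\mathfrak{U}\subseteq\mathfrak{H}\subseteq Der(\mathfrak{U})$ and the projection of $\mathfrak{H}$ onto each $id_{\mathfrak{U}_i}\otimes Der(Sym(n_i))(-1)$ is onto. By Proposition \ref{der}, $Aut(\mathfrak{U}_a)$ is an algebraic supergroup with Lie superalgebra $Der(\mathfrak{U})$; let $G$ be its connected component. By Proposition \ref{resume}, $\mathfrak{U}$ is algebraic in $Der(\mathfrak{U})$, so $G$ contains a connected supersubgroup $U$ with $\mathsf{Lie}(U)=\mathfrak{U}$. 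Because $\mathfrak{U}$ is an ideal of $Der(\mathfrak{U})$, Lemma \ref{normal} yields $U\unlhd G$, so $(U,G)$ is a sandwich pair.

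Assuming $H$ is connected (as is implicit in the definition of insertion; otherwise one replaces $H$ by its connected component), I would build $\phi\colon H\to G$ as follows. By Lemma \ref{AdcommuteswithLieoperation}, the adjoint action factors as $\mathsf{Ad}\colon H\to Aut(\mathfrak{H}_a)$. The crucial claim is that $\mathfrak{U}$ is a characteristic ideal of $\mathfrak{H}$, so every $A$-point of $Aut(\mathfrak{H}_a)$ preserves $\mathfrak{U}\otimes A$; restriction then yields a supergroup morphism $Aut(\mathfrak{H}_a)\to Aut(\mathfrak{U}_a)$, and composition with $\mathsf{Ad}$ together with passage to the connected component gives $\phi\colon H\to G$. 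A direct computation identifies $d\phi\colon\mathfrak{H}\to\mathsf{Lie}(G)=Der(\mathfrak{U})$ with $x\mapsto\operatorname{ad}(x)|_{\mathfrak{U}}$, which is precisely the given embedding $\mathfrak{H}\hookrightarrow Der(\mathfrak{U})$ from Kac's theorem. Hence $d\phi$ is injective, so $\mathsf{Lie}(\ker\phi)=0$; in characteristic zero this forces $\ker\phi$ to be finite. Finally, $\phi(H)$ is a connected supersubgroup of $G$ with $\mathsf{Lie}(\phi(H))=d\phi(\mathfrak{H})\supseteq\mathfrak{U}=\mathsf{Lie}(U)$, so the Lie correspondence (Proposition 9.1 of \cite{zub}) gives $U\le\phi(H)$.

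The step I expect to be the main obstacle is verifying that $\mathfrak{U}$ is a characteristic ideal of $\mathfrak{H}$ functorially over every $A\in\mathsf{SAlg}_K$. The natural route is to exhibit an intrinsic description of $\mathfrak{U}$ inside $\mathfrak{H}$ --- for instance as the sum of all superideals of $\mathfrak{H}$ that decompose as $\mathfrak{V}\otimes Sym(n)$ with $\mathfrak{V}$ simple, or equivalently as the unique maximal such ideal --- and then to check that Lie superalgebra automorphisms must preserve this description. Once this functorial invariance of $\mathfrak{U}$ is secured, everything else follows formally from the apparatus already established earlier in the paper.
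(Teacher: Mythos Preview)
Your approach is essentially the paper's: use Kac's structure theorem to get $\mathfrak{U}\subseteq\mathfrak{H}\subseteq Der(\mathfrak{U})$, take $G=Aut(\mathfrak{U}_a)^0$ and $U$ from Proposition~\ref{resume}, and build $\phi$ from the adjoint action of $H$ on $\mathfrak{U}$, identifying $d\phi$ with $x\mapsto\operatorname{ad}(x)|_{\mathfrak{U}}$ so that $\mathsf{Lie}(\ker\phi)=Z_{\mathfrak{H}}(\mathfrak{U})\subseteq Z_{Der(\mathfrak{U})}(\mathfrak{U})=0$.

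The only substantive difference is the detour you flag as your ``main obstacle''. You propose to factor $\mathsf{Ad}$ through a morphism $Aut(\mathfrak{H}_a)\to Aut(\mathfrak{U}_a)$, which would indeed require $\mathfrak{U}$ to be a characteristic ideal of $\mathfrak{H}$ functorially in $A$. The paper avoids this entirely: since $\mathfrak{U}$ is an ideal of $\mathfrak{H}$, it is in particular an $\mathfrak{H}$-submodule under the adjoint action, and because $H$ is connected and $\mathrm{char}\,K=0$, this is the same as being an $H$-subsupermodule of $\mathfrak{H}$ (this is exactly how Lemma~9.4 of \cite{zub} is used in the proof of Lemma~\ref{directsum}). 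Hence the adjoint action restricts directly to give $\phi\colon H\to GL(\mathfrak{U})$, and Lemma~\ref{AdcommuteswithLieoperation} then shows its image lies in $Aut(\mathfrak{U}_a)$. So you never need to decide whether $\mathfrak{U}$ is preserved by \emph{all} automorphisms of $\mathfrak{H}$; it suffices that it is preserved by those coming from $H$, and that is automatic. With this simplification your proof coincides with the paper's.
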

\begin{proof}
Since $\mathfrak{H}$ is semisimple, there is a Lie superalgebra $\mathfrak{U}$ as previously defined such that
$\mathfrak{U}\subseteq\mathfrak{H}\subseteq Der(\mathfrak{U})$.
By Lemma \ref{AdcommuteswithLieoperation}, the map $\mathsf{Ad} : H\to GL(\mathfrak{H})$ preserves the Lie operation in $\mathfrak{H}_a$. In particular, the 
induced morphism $\phi : H\to GL(\mathfrak{U})$ maps $H$ to $Aut(\mathfrak{U}_a)$. By Proposition 9.1 of \cite{zub},
$\mathsf{Lie}(\ker\phi)$ coincides with the centralizer of  $\mathfrak{U}$ in $\mathfrak{H}$. Using Proposition 5.1.2 of \cite{kac} 
and formulas (1)-(3), one can easily show that $Z_{Der(\mathfrak{U})}(\mathfrak{U})$ is zero.  
\end{proof}

The following question arises naturally: Is every semisimple superalgebra $\mathfrak{H}$, containing $\mathfrak{U}$ and contained in $Der(\mathfrak{U})$, algebraic? 
The answer is negative in general. Moreover, the following (very elementary!) example is an evidence that many of such superalgeras $\mathfrak{H}$ are not algebraic. 
We think it is likely that the algebraic supersubalgebras $\mathfrak{H}$ as above form an observable family. We plan to describe such a family in a future article.
\begin{exam}\label{notalgebraic}
Set $\mathfrak{U}=\mathfrak{sl}_2(K)\otimes Sym(2)$. Define the superalgebra $\mathfrak{H}$, satisfying $\mathfrak{U}\subseteq \mathfrak{H}\subseteq Der(\mathfrak{U})$, by
$$\mathfrak{H} = \mathfrak{U}\oplus K(id_{\mathfrak{U}}\otimes \frac{d}{dz_1})\oplus K(id_{\mathfrak{U}}\otimes \frac{d}{dz_2})\oplus K(id_{\mathfrak{U}}\otimes \delta),$$
where $\delta=\frac{d}{dz_1}(z_1+z_2)+\frac{d}{dz_2}z_2$. 

Suppose that $\mathfrak{H}$ is algebraic and $H$ is an algebraic supergroup such that $\mathsf{Lie}(H)=\mathfrak{H}$. 
Since $V=\mathfrak{sl}_2(K)\otimes (Kz_1+Kz_2)$ is a $\mathfrak{H}_0$-submodule of $\mathfrak{H}_1$, it is also an $H_{ev}$-submodule with respect to the adjoint action.  
This implies that the semisimple and nilpotent components of the operator $id_{\mathfrak{U}}\otimes \delta |_V$ belong to the image of $\mathfrak{H}_0$ in $\mathfrak{gl}(V)$. 
Since this is not true, we have reached a contradiction.
\end{exam}
Recall that a connected algebraic supergroup $H$ is called {\it quasireductive} if $H_{ev}$ is linearly reductive, or equivalently, $(H_{ev})_u=1$ (see \cite{wat1}).
If $H$ is quasireductive, then its Lie superalgebra $\mathfrak{H}=\mathsf{Lie}(H)$ is also quasireductive, meaning that 
$\mathfrak{H}_0$ is reductive and $\mathfrak{H}_0$-module $\mathfrak{H}$ is semisimple (cf. \cite{serg}). 
Using Proposition 9.3 of \cite{maszub} and the standard properties of reductive algebraic groups one can show that a connected normal supersubgroup is quasireductive, and a homomorphic image of quasireductive supergroup is quasireductive as well.
\begin{lemma}\label{quasiredandsemisimple}
Assume that a superalgebra $\mathfrak{H}$ is semisimple and quasireductive. Then 
$\mathfrak{U}\subseteq\mathfrak{H}\subseteq Der(\mathfrak{U})$, where $\mathfrak{U}=\oplus_{1\leq i\leq t} \mathfrak{U}_i\otimes Sym(n_i)$ 
is such that every summand $\mathfrak{U}_i\otimes Sym(n_i)$ is one of the following two types:

- $n_i=0$ and $\mathfrak{U}_i$ is a classical simple Lie superalgebra or

- $n_i=1$ and $\mathfrak{U}_i$ is a simple Lie algebra.

\end{lemma}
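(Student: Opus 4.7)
The plan is to use Kac's structure theorem for semisimple Lie superalgebras, reviewed just before Proposition \ref{resume}. Since $\mathfrak{H}$ is semisimple, it fits into a sandwich $\mathfrak{U}\subseteq\mathfrak{H}\subseteq Der(\mathfrak{U})$ with $\mathfrak{U}=\oplus_{1\le i\le t}\mathfrak{U}_i\otimes Sym(n_i)$ and each $\mathfrak{U}_i$ simple. The task then is to show that the quasireductivity hypothesis forces each pair $(\mathfrak{U}_i,n_i)$ to be of one of the two listed types, and this reduces to a case analysis on $n_i$.

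The key preliminary observation is that the block-diagonal description of $Der(\mathfrak{U})$ recorded before Lemma \ref{firstreduction} implies each summand $V_i:=\mathfrak{U}_i\otimes Sym(n_i)$ is an ideal of $Der(\mathfrak{U})$, and hence an ideal of $\mathfrak{H}$. Consequently $(V_i)_0=(\mathfrak{U}_i)_0\otimes Sym(n_i)_0+(\mathfrak{U}_i)_1\otimes Sym(n_i)_1$ is an ideal of the reductive Lie algebra $\mathfrak{H}_0$, and so must itself be reductive.

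If $n_i\ge 2$, I would take $\mathfrak{n}_0$ to be the non-zero nilpotent augmentation ideal of $Sym(n_i)_0$ and observe that $(\mathfrak{U}_i)_0\otimes \mathfrak{n}_0$ is a nilpotent ideal of $(V_i)_0$. Since $(\mathfrak{U}_i)_0$ is not abelian for any simple Lie superalgebra appearing in Kac's list, this ideal is not central in $(V_i)_0$, contradicting its reductivity; hence $n_i\le 1$. If $n_i=1$, then $Sym(1)_0=K$, $Sym(1)_1=Kz_1$ and $z_1^2=0$, so $(V_i)_0=(\mathfrak{U}_i)_0\oplus (\mathfrak{U}_i)_1 z_1$ with $(\mathfrak{U}_i)_1 z_1$ an abelian ideal of $(V_i)_0$. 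If $(\mathfrak{U}_i)_1\ne 0$, then by simplicity of $\mathfrak{U}_i$ the action of $(\mathfrak{U}_i)_0$ on $(\mathfrak{U}_i)_1$ is non-trivial (otherwise $(\mathfrak{U}_i)_0$ would be a proper non-zero ideal of $\mathfrak{U}_i$), so $(\mathfrak{U}_i)_1 z_1$ is non-central and reductivity fails. Therefore $(\mathfrak{U}_i)_1=0$, i.e.\ $\mathfrak{U}_i$ is a simple Lie algebra.

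Finally, if $n_i=0$, then $(V_i)_0=(\mathfrak{U}_i)_0$ must be reductive, and by Kac's classification this singles out exactly the classical simple Lie superalgebras; the Cartan-type families $W(n),S(n),\tilde{S}(n),H(n)$ are excluded because their even parts contain non-central solvable ideals, as was already used in the proof of Lemma \ref{firstalg} for $S(n)$ and $H(n)$, and as follows for $W(n)$ and $\tilde{S}(n)$ from their standard $\mathsf{Z}$-grading (the positive-degree even part is a nilpotent ideal on which the degree-zero component acts non-trivially). The only genuine obstacle in the argument is this last verification, a case-by-case inspection of the even parts of the Cartan-type simple Lie superalgebras; everything else is a direct deduction from Kac's structure theorem together with elementary multiplication in $Sym(n_i)$.
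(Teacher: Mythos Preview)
Your approach is the same as the paper's: reduce to a single block $V=\mathfrak{V}\otimes Sym(n)$ (which, being a superideal of $\mathfrak{H}$, has reductive even part), exhibit a nilpotent ideal of $V_0$ that is thereby forced to be central, and derive a contradiction for $n\ge 2$ and for $n=1$ with $\mathfrak{V}_1\ne 0$.

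There is, however, a technical slip in your $n_i\ge 2$ step. The subspace $(\mathfrak{U}_i)_0\otimes\mathfrak{n}_0$ is \emph{not} an ideal of $(V_i)_0$ in general: bracketing with an element of $(\mathfrak{U}_i)_1\otimes Sym(n_i)_1$ produces an element of $(\mathfrak{U}_i)_1\otimes\bigl(\mathfrak{n}_0\cdot Sym(n_i)_1\bigr)$, and for $n_i\ge 3$ the product $\mathfrak{n}_0\cdot Sym(n_i)_1$ is non-zero (for instance $z_1z_2\cdot z_3\ne 0$), so this bracket need not lie in $(\mathfrak{U}_i)_0\otimes\mathfrak{n}_0$. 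The remedy is exactly the one the paper uses: work instead with the larger nilpotent ideal
\[
(\mathfrak{U}_i)_0\otimes\mathfrak{n}_0\;+\;(\mathfrak{U}_i)_1\otimes Sym(n_i)_1
\]
of $(V_i)_0$. Your non-centrality argument (bracket $x\otimes 1$ against $y\otimes a$ with $a\in\mathfrak{n}_0$) then goes through unchanged. The paper also organises the contradiction slightly more self-containedly, deducing from centrality that $\mathfrak{V}_0\subseteq Z(\mathfrak{V})=0$ and hence that $\mathfrak{V}$ is abelian, rather than appealing to the classification-dependent assertion that $(\mathfrak{U}_i)_0$ is never abelian; but your version is fine once the ideal is corrected. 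Your treatments of the cases $n_i=1$ and $n_i=0$ agree with the paper's, the only difference being that the paper cites \cite{frsorsci} for the last case while you spell out the check on the Cartan-type even parts.
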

\begin{proof}
Since every superideal of a quasireductive Lie superalgebra is also quasireductive, it is enough to consider the case $\mathfrak{U}=\mathfrak{V}\otimes Sym(n)$, where $\mathfrak{V}$ is a simple Lie superalgebra. The ideal $\mathfrak{V}_0\otimes Sym(n)^2_1 +\mathfrak{V}_1\otimes Sym(n)_1$
is nilpotent, and therefore it is contained in the center of the reductive algebra $\mathfrak{U}_0$. If $Sym(n)_1^2\neq 0$, then
$\mathfrak{V}_0\subseteq Z(\mathfrak{V})$ which implies $\mathfrak{V}_0=0$ and $\mathfrak{V}$ is abelian.
This contradiction implies $Sym(n)_1^2 =0$, and therefore $n=0, 1$. If $n=1$, then $[\mathfrak{V}_0, \mathfrak{V}_1]=0$. Assuming $\mathfrak{V}_1\neq 0$, we get
$\mathfrak{V}_0=\mathfrak{V}'_1$ and $\mathfrak{V}_0$ is abelian. Consequently, $\mathfrak{V}_0\subseteq Z(\mathfrak{V})$ and $\mathfrak{V}_1 =0$. 
Finally, if $n=0$, then $\mathfrak{V}$ is quasireductive and therefore $\mathfrak{V}$ is classical (cf. \cite{frsorsci}).   
\end{proof}
\begin{lemma}
A connected algebraic supergroup $H$ is quasireductive if and only if $R(H)_{ev}$ is a torus and $H/R(H)$ is quasireductive.
\end{lemma}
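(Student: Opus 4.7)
The plan is to prove the two implications separately. For the forward direction I use the remark before the lemma that quasireductivity is preserved under homomorphic images, together with the standard structure theory of reductive algebraic groups. For the reverse direction I work at the level of the underlying even groups and analyze the unipotent radical of $H_{ev}$ via the quotient $H \to H/R(H)$.

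For the forward direction, suppose $H$ is quasireductive, so $H_{ev}$ is a connected reductive algebraic group. Since $R(H)$ is a connected normal solvable supersubgroup of $H$, its even part $R(H)_{ev}$ is a connected normal solvable subgroup of $H_{ev}$. In characteristic zero a connected normal subgroup of a reductive algebraic group is itself reductive: its unipotent radical is characteristic in it, hence normal and connected unipotent in the ambient reductive group, hence trivial. Thus $R(H)_{ev}$ is a connected solvable reductive algebraic group, and any such group is a torus. On the other hand, $H/R(H)$ is a homomorphic image of the quasireductive $H$, so it is itself quasireductive.

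For the reverse direction, suppose $R(H)_{ev}$ is a torus and $H/R(H)$ is quasireductive. The quotient morphism $\pi : H \to H/R(H)$ restricts to a morphism $H_{ev}\to (H/R(H))_{ev}$ whose kernel is $R(H)\cap H_{ev}=R(H)_{ev}$, inducing an injective morphism $\iota : H_{ev}/R(H)_{ev}\to (H/R(H))_{ev}$. Both source and target are connected (purely even) algebraic groups, and a dimension count
$$\dim(H_{ev}/R(H)_{ev})=\dim H_{ev}-\dim R(H)_{ev}=\dim_K \mathsf{Lie}(H/R(H))_0=\dim(H/R(H))_{ev}$$
shows that $\iota$ is an isomorphism. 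Therefore $H_{ev}/R(H)_{ev}$ is reductive. The image of the connected normal unipotent subgroup $(H_{ev})_u$ in this reductive quotient is then trivial, so $(H_{ev})_u\subseteq R(H)_{ev}$; since the latter is a torus, $(H_{ev})_u=1$, as required.

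The main obstacle I foresee is the verification that $\iota$ is an isomorphism, equivalently that $(-)_{ev}$ commutes with the sheaf quotient by $R(H)$. The dimension count is routine in characteristic zero, but a conceptually clean verification ultimately rests on the compatibility of $(-)_{ev}$ with quotients by normal supersubgroups (cf.\ Proposition 9.3 of \cite{maszub}, cited in the discussion preceding the lemma); this is the one step that warrants extra care.
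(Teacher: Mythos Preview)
Your proof is correct and follows essentially the same route as the paper's: for the forward direction both argue that $R(H)_{ev}$ is a connected normal solvable subgroup of the reductive group $H_{ev}$, hence a torus; for the converse both reduce to showing $(H_{ev})_u$ maps trivially into the reductive group $H_{ev}/R(H)_{ev}\simeq (H/R(H))_{ev}$ and then use that $(H_{ev})_u\cap R(H)_{ev}=1$. The only difference is that you spell out the isomorphism $H_{ev}/R(H)_{ev}\simeq (H/R(H))_{ev}$ via a dimension count, whereas the paper leaves this implicit (relying on Proposition 9.3 of \cite{maszub} mentioned just before the lemma).
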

\begin{proof}
If $H$ is quasireductive, then $R(H)_{ev}$ is a connected normal subgroup of the reductive group $H_{ev}$ and therefore it is also reductive. Since $R(H)_{ev}$ is solvable, it is a torus. Conversely, $(H_{ev})_u$ is isomorphic to a subgroup of
$(H_{ev}/R(H)_{ev})_u =1$. 
\end{proof}
From now on, assume that $H$ is a (connected) quasireductive supergroup and $\mathfrak{H}=\mathsf{Lie}(H)$.
\begin{lemma}\label{centerisalgebraic}
Let $\mathfrak{Z}=Z(\mathfrak{H})$. Then $\mathfrak{Z}$ is algebraic in $\mathfrak{H}$.
\end{lemma}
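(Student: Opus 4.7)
The plan is to realize $\mathfrak{Z}$ as the Lie superalgebra of the kernel of the adjoint representation of $H$. Recall from Section 2 that the adjoint action is a morphism of algebraic supergroups $\mathsf{Ad}:H\to GL(\mathfrak{H})$, so its kernel $K:=\ker\mathsf{Ad}$ is a closed supersubgroup of $H$. It therefore suffices to identify $\mathsf{Lie}(K)$ with $\mathfrak{Z}$.

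The differential of $\mathsf{Ad}$ at the identity sends $x\in\mathfrak{H}$ to $\operatorname{ad}(x)\in\mathfrak{gl}(\mathfrak{H})$. Hence, using Proposition 9.1 of \cite{zub} exactly as in the proof of Theorem \ref{sandwich} (which treats the analogous morphism $H\to GL(\mathfrak{U})$), one obtains
\[\mathsf{Lie}(K)=\ker(d\mathsf{Ad})=\{x\in\mathfrak{H}\mid [x,\mathfrak{H}]=0\}=Z(\mathfrak{H})=\mathfrak{Z}.\]
This exhibits $\mathfrak{Z}$ as an algebraic supersubalgebra of $\mathfrak{H}$. Equivalently, one may invoke Proposition \ref{description} to reduce to showing that $\mathfrak{Z}_0$ is algebraic in $\mathfrak{H}_0$ and then run the identical argument on the restricted morphism $H_{ev}\to GL(\mathfrak{H})_{ev}$, whose kernel is an algebraic subgroup of the reductive group $H_{ev}$ with Lie algebra $\mathfrak{Z}_0$.

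I foresee no serious obstacle. The quasireductivity of $H$ is not actually required for this isolated statement; it enters the surrounding theory in order to subsequently identify the identity component $K^{0}$ with $Z(H)^{0}$ and to control the structure of the quotients in the filtration $Z(G)\subseteq C\subseteq G$ envisaged in the introduction. The only technical input is the standard fact, applied via Proposition 9.1 of \cite{zub}, that in characteristic zero the Lie (super)algebra of the kernel of a supergroup morphism coincides with the kernel of the induced differential.
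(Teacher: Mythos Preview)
Your proof is correct and takes a more direct route than the paper. The paper proceeds by considering the second center $\tilde{\mathfrak{Z}}$ (the preimage of $Z(\mathfrak{H}/\mathfrak{Z})$), invokes Lemma 5.5 of \cite{serg}---which uses the quasireductivity hypothesis---to obtain $\mathfrak{Z}_0=\tilde{\mathfrak{Z}}_0$, and then appeals to Proposition \ref{description}. Your argument instead realizes $\mathfrak{Z}$ directly as $\mathsf{Lie}(\ker\mathsf{Ad})$ via Proposition 9.1 of \cite{zub}, exactly as the paper itself does for the analogous kernels in Lemma \ref{directsum} and Theorem \ref{sandwich}. This is more elementary and, as you observe, does not need quasireductivity at all; the paper's approach buys nothing extra here, so your version is a genuine simplification. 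One cosmetic point: you should avoid naming the kernel $K$, since that symbol already denotes the ground field throughout the paper.
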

\begin{proof}
Let $\tilde{\mathfrak{Z}}$ be the preimage of $Z(\mathfrak{H}/\mathfrak{Z})$. We have already seen that 
$\tilde{\mathfrak{Z}}$ is algebraic in $\mathfrak{H}$. By Lemma 5.5 of \cite{serg} we infer $\mathfrak{Z}_0 =\tilde{\mathfrak{Z}}_0$.
The claim follows using Proposition \ref{description}.  
\end{proof}
Denote by $Z$ the connected supersubgroup of $H$ such that $\mathsf{Lie}(Z)=\mathfrak{Z}$.
\begin{lemma}\label{center}
The supergroup $Z$ satisfies $Z=Z(H)^0$. Moreover, the largest central torus of $H$ coincides with the largest central torus of $R(H)$. 
\end{lemma}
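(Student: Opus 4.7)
The plan is to prove $Z = Z(H)^0$ by double inclusion, and then to identify the largest central torus of $H$ with that of $R(H)$ via the rigidity of diagonalizable supergroups (Lemma \ref{rigidtheorem}).

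The inclusion $Z \leq Z(H)^0$ is immediate from Lemma \ref{centralsubgroup}, since $Z$ is connected and $\mathsf{Lie}(Z) = \mathfrak{Z}$ is central in $\mathfrak{H}$. For the reverse inclusion $Z(H)^0 \leq Z$, I would argue that the adjoint action of $Z(H)$ on $\mathbf{Lie}(H)$ is trivial: by the defining property of the center functor, for $g \in Z(H)(A)$ and $A[\epsilon_0,\epsilon_1]$, the element $H(i_A)(g)$ commutes with every element of $H(A[\epsilon_0,\epsilon_1])$, so $\mathsf{Ad}(g)$ is the identity on $\mathbf{Lie}(H)(A)$. Differentiating this via the $\epsilon$-technique (as in the proof of Lemma \ref{AdcommuteswithLieoperation}) yields $[\mathsf{Lie}(Z(H)^0), \mathfrak{H}] = 0$, so $\mathsf{Lie}(Z(H)^0) \subseteq \mathfrak{Z} = \mathsf{Lie}(Z)$. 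Since both $Z(H)^0$ and $Z$ are connected supersubgroups of $H$ and $\mathrm{char}\,K=0$, the Harish-Chandra superpair correspondence (as invoked in Proposition \ref{description}) gives $Z(H)^0 \leq Z$.

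For the torus assertion, let $T_H$ denote the largest central torus of $H$ and $T_R$ the largest central torus of $R(H)$. Since $T_H$ is connected, solvable, normal in $H$, and even, it lies inside $R(H)$; being central in $H$ it is a fortiori central in $R(H)$, so $T_H \leq T_R$. Conversely, $T_R$ is the unique maximal central torus of $R(H)$, and hence is preserved by every supergroup automorphism of $R(H)$. Because $R(H) \unlhd H$, conjugation by $H$ acts on $R(H)$ by supergroup automorphisms and therefore restricts to an action of the connected supergroup $H$ on the diagonalizable group $T_R$. By Lemma \ref{rigidtheorem}, this action is trivial, so $T_R$ is central in $H$ and $T_R \leq T_H$.

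The main technical step is verifying that the differential of the trivial action of $Z(H)$ on $\mathbf{Lie}(H)$ really gives $[\mathsf{Lie}(Z(H)^0), \mathfrak{H}] = 0$; this is a routine but delicate super-analogue of the classical $\epsilon$-computation and has to be performed in the functorial setting of $Z(H)$ introduced in Section 1. A secondary point that should be made explicit is that $T_R$ is characteristic in $R(H)$, so that the conjugation action of $H$ really does descend to $T_R$; this follows from the fact that any supergroup automorphism of $R(H)$ maps $R(H)_{ev}$, and hence any central torus of $R(H)$, onto a central torus of the same dimension.
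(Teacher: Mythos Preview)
Your proof is correct and, for the equality $Z=Z(H)^0$, essentially identical to the paper's: both use Lemma \ref{centralsubgroup} for $Z\leq Z(H)^0$ and the inclusion $\mathsf{Lie}(Z(H)^0)\subseteq\mathfrak{Z}$ (together with connectedness in characteristic zero) for the reverse.

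For the torus statement the routes diverge slightly. The paper does not argue that $T_R$ is characteristic in $R(H)$; instead it uses the fact, recorded already in Section 1, that $Z(R(H))\unlhd H$ (hence $Z(R(H))^0\unlhd H$ by Lemma \ref{charsubgr}(2)), and then applies Theorem \ref{actiononabelian} to this normal abelian supersubgroup to see that $H$ acts trivially on its diagonalizable part $T'=Z(R(H))^0_{ev}$. Your route via Lemma \ref{rigidtheorem} reaches the same conclusion, but the ``characteristic'' justification you give is phrased for $K$-automorphisms of $R(H)$, whereas what is actually needed is stability of $(T_R)_A$ under conjugation by $H(A)$ for every $A\in\mathsf{SAlg}_K$. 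This is true (e.g.\ because $T_R=Z(R(H))^0_{ev}$ and center, connected component, and even part are all functorial), but the paper's approach sidesteps the issue entirely by working with $Z(R(H))^0$, whose normality in $H$ is already on record.
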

\begin{proof}
From Lemma \ref{centralsubgroup} we infer that $Z\subseteq Z(H)^0$. Since $\mathsf{Lie}(Z(H)^0)\subseteq\mathfrak{Z}$, the first statement follows.

Denote the largest central toruses of $H$ and $R(H)$ by $T$ and $T'$, respectively. It is clear that $T\leq T'$. On the other hand, $Z(R(H))^0$ is a normal supersubgroup of $H$ (see \cite{zubul}). Using Theorem \ref{actiononabelian} we conclude that $T'=Z(R(H))^0_{ev}\leq T$.
\end{proof}
\begin{lemma}\label{solvablequasired}
Let $T$ be the largest central torus of a solvable supergroup $H$. Then $H/(T\times H_u)$ is a triangulizable supergroup. 
\end{lemma}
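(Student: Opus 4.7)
The plan is to reduce the statement to a Lie superalgebra computation and then invoke a splitting argument on the resulting quotient.

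First I would pin down the structural data. Since $H$ is solvable and quasireductive, $H_{ev}$ is simultaneously solvable and reductive, so it is a torus; consequently $(H_u)_{ev}$ is a connected unipotent subgroup of a torus and must be trivial, making $H_u\cong(G_a^-)^k$ an abelian odd unipotent supergroup. The subtorus $T\leq H_{ev}$ is central in $H$, $T\cap H_u=1$, and $T$ commutes with $H_u$, so $T\times H_u$ is a well-defined normal supersubgroup and $\bar H:=H/(T\times H_u)$ is a well-defined quotient supergroup.

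The central technical step is proving the inclusion $[\mathfrak H_1,\mathfrak H_1]\subseteq\mathsf{Lie}(T)$, where $\mathfrak H=\mathsf{Lie}(H)$. Here I would use the weight decomposition $\mathfrak H_1=\bigoplus_\lambda\mathfrak H_1^\lambda$ under the torus Lie algebra $\mathfrak H_0$, which acts semisimply by quasireductivity. Because $\mathfrak H_0$ is abelian, $[\mathfrak H_1^\lambda,\mathfrak H_1^\mu]\subseteq\mathfrak H_0$ vanishes unless $\lambda+\mu=0$. Fixing $x\in\mathfrak H_1^\lambda$, $y\in\mathfrak H_1^{-\lambda}$, and $z\in\mathfrak H_1^\mu$ for $\mu$ a nonzero weight, the super Jacobi identity reads $[[x,y],z]=[x,[y,z]]+[y,[x,z]]$. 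If $\mu\neq\pm\lambda$, both right-hand summands vanish by weight reasons and the left-hand side equals $\mu([x,y])z$, so $\mu([x,y])=0$. If $\mu=\pm\lambda$ with $\lambda\neq 0$, I would take $z=x$ or $z=y$; the element $[x,x]$ (resp.\ $[y,y]$) lies in $\mathfrak H_0^{2\lambda}=0$, killing one summand, and sign-tracking of the remaining piece collapses the identity to $2\lambda([x,y])z=0$, forcing $\lambda([x,y])=0$. Since $\mathsf{Lie}(T)$ is exactly the intersection $\bigcap_{\mu\neq 0}\ker\mu$ over nonzero weights appearing in $\mathfrak H_1$, the claim follows.

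From here the conclusion is structural. The Lie superalgebra $\mathsf{Lie}(\bar H)$ has abelian odd part $\mathfrak H_1/\mathsf{Lie}(H_u)$ and torus Lie algebra $\mathfrak H_0/\mathsf{Lie}(T)$ as its even part; hence $\mathsf{Lie}(\bar H_u)$ equals the full odd part, $\bar H_u$ is odd unipotent abelian, and $\bar H/\bar H_u$ has purely even torus Lie algebra and so is a torus. The composition $\bar H_{ev}=H_{ev}/T\hookrightarrow\bar H\twoheadrightarrow\bar H/\bar H_u$ induces the identity on Lie superalgebras (the common value being $\mathfrak H_0/\mathsf{Lie}(T)$), and therefore is an isomorphism of connected supergroups in characteristic zero. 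This isomorphism provides a section of $\bar H\to\bar H/\bar H_u$, exhibiting $\bar H=\bar H_u\rtimes(H_{ev}/T)$ with $H_{ev}/T$ a torus, which is the definition of triangulizable.

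The main obstacle is the weight-space Jacobi calculation yielding $[\mathfrak H_1,\mathfrak H_1]\subseteq\mathsf{Lie}(T)$; the surrounding ingredients rely only on the Harish-Chandra superpair equivalence, dimensional bookkeeping, and standard facts about connected solvable reductive groups already established in the paper.
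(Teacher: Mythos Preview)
Your argument is correct and takes a genuinely different route from the paper's.

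The paper argues by induction on $\dim\mathfrak H$: it looks at the last nontrivial term $H^{(k)}$ of the derived series, which by Proposition \ref{structureofabelian} is a product of a torus and an odd unipotent group; invoking Theorem \ref{actiononabelian}, that torus is forced to be central in $H$, so $T\times H_u$ is nontrivial and the inductive hypothesis applies to the quotient. A short maximality argument then shows $\bar H$ has no nontrivial central torus, completing the induction. Thus the paper's proof is structural and group-theoretic, leaning on the earlier analysis of actions on abelian supergroups.

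Your proof is instead a direct Lie-superalgebra computation. The weight-space Jacobi calculation giving $[\mathfrak H_1,\mathfrak H_1]\subseteq Z(\mathfrak H)_0=\mathsf{Lie}(T)$ is clean and replaces both the induction and the appeal to Theorem \ref{actiononabelian}. The remaining splitting step, via the Harish-Chandra pair equivalence and the observation that $\bar H_{ev}\to\bar H/N$ is an isomorphism of connected groups with the same Lie algebra, is sound; note that to literally match the definition $\bar H=\bar H_u\rtimes(\text{torus})$ you should add one line: the normal odd unipotent $N$ you built satisfies $N\leq\bar H_u$, and $\bar H_u/N$ sits as a unipotent normal subgroup of the torus $\bar H/N$, hence is trivial, so $N=\bar H_u$. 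With that, your argument is complete.

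Each approach has its merits: the paper's method reuses machinery already developed (Theorem \ref{actiononabelian}) and stays at the group level; yours is shorter, self-contained once the identification $\mathsf{Lie}(T)=Z(\mathfrak H)_0$ is made (which follows from Lemmas \ref{centerisalgebraic} and \ref{center}), and makes the key obstruction $[\mathfrak H_1,\mathfrak H_1]\subseteq\mathsf{Lie}(T)$ completely explicit.
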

\begin{proof}
We proceed by induction on $\dim\mathfrak{H}$. For sufficiently large positive integer $k$, the supersubgroup $H^{(k)}$ decomposes as $T'\times
U$, where $T'$ is torus, $U$ is odd unipotent and $T'\neq 1$ or $U\neq 1$. By Theorem \ref{actiononabelian}, $T'$
is central and contained in the largest central torus $T$ of $H$. Moreover, $T\times H_u$ is a non-trivial normal connected supersubgroup. 
Thus $H/(T\times H_u)$ is either triangulizable or it contains a non-trivial central torus $T''$.     
Denote by $R$ the preimage of $T''$ in $H$. Then $R\unlhd H$ and $R_{ev}$ is a torus. Since $R_{ev}/T\simeq T''$, we have
$R=R_{ev}\times H_u$ that implies that $R_{ev}$ is a central torus bigger than $T$. This contradiction completes the proof.   
\end{proof}
\begin{rem}
The statement of Lemma \ref{solvablequasired} can be reformulated as follows: If $H$ is solvable, then 
$H/Z$ is triangulizable. Additionally, observe that $H_u$ is always an odd supergroup.
\end{rem}
The supergroup $\tilde{H}=H/R(H)$ can be inserted in a sandwich pair $(U, G)$. By Proposition \ref{resume} and Lemma \ref{quasiredandsemisimple}, we have 
$U=U_1\ldots U_t$ and $G=G_1\ldots G_t$, where $\mathsf{Lie}(U_i)=\mathfrak{U}_i\otimes Sym(n_i),
\mathsf{Lie}(G_i)=Der(\mathfrak{U}_i\otimes Sym(n_i))$ and $n_i=0, 1$ are such that $\mathfrak{U}_i$ is a classical simple superalgebra provided $n_i=0$
and $\mathfrak{U}_i$ is a simple Lie algebra if $n_i=1$.

Order the components $U_i$ so that all superalgebras $\mathfrak{U}_i$ of type $A(n, n), P(n)$ or $Q(n)$ correspond to indices $1\leq i\leq s$, and
components with $n_i=0$ correspond to indices $s+1\leq i\leq s'$, where $s'\leq t$. According to Proposition 5.1.2(a) of \cite{kac} we have 
$U_i=G_i$ for $i > s'$. Then 
$$\mathsf{Lie}(G/U)=(\oplus_{1\leq i\leq s} Der(\mathfrak{U}_i)/\mathfrak{U}_i)\oplus
Der(Sym(1))^{\oplus (s'-s)}.$$

Denote the superalgebra $\mathsf{Lie}(G/U)$ by $\mathfrak{F}$ and supergroups $G_i U/U\simeq G_i/U_i$ by $R_i$ for $1\leq i\leq s'$.

\begin{lemma}\label{structureofGmoduloU}
Using the above notations, we have the following statements.
\begin{enumerate}
\item If $\mathfrak{U}_i$ is of type $A(n,n)$ or $P(n)$, then $R_i\simeq G_m$. If $\mathfrak{U}_i$ is of type $Q(n)$, 
then $R_i\simeq G_a^-$.
\item $Z(G/U)^0=R_1\ldots R_s$.
\item If $i>s$, then $R_i'\simeq G_a^-$ and $R_i/R_i'\simeq G_m$.
\end{enumerate} 
\end{lemma}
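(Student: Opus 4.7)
My plan is to identify each $R_i$ by computing its Lie superalgebra from the decomposition of $Der(\mathfrak{U})$, then recognizing the corresponding connected algebraic supergroup, and finally assembling the center computation in (2) from the components.

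The Lie-algebraic data comes directly from $\mathsf{Lie}(R_i)=Der(\mathfrak{U}_i\otimes Sym(n_i))/(\mathfrak{U}_i\otimes Sym(n_i))$. For $1\leq i\leq s$ this quotient is $Der(\mathfrak{U}_i)/\mathfrak{U}_i$, which by Proposition 5.1.2 of \cite{kac} is one-dimensional: even (and generated by a semisimple outer derivation, namely the grading derivation) for $A(n,n)$ and $P(n)$, and odd for $Q(n)$. For $s<i\leq s'$ it is isomorphic to $Der(Sym(1))$, with basis $e=\frac{d}{dz_1}z_1$ (even) and $f=\frac{d}{dz_1}$ (odd) satisfying $[e,f]=f$ and $[f,f]=0$. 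Since any connected supergroup of superdimension $0|1$ is $G_a^-$, the $Q(n)$ case of (1) is immediate.

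For the remaining case of (1), $R_i$ is a one-dimensional connected even supergroup, hence $G_a$ or $G_m$. I would distinguish them via the adjoint action of $R_i$ on $\mathsf{Lie}(U_i)=\mathfrak{U}_i$: the generating outer derivation acts semisimply on $\mathfrak{U}_i$ with more than one eigenvalue, whereas $G_a$ acts on any finite-dimensional module only unipotently, forcing $R_i\simeq G_m$. For (3), the same Lie-algebra computation shows $(R_i)_{ev}$ has one-dimensional even Lie algebra and the odd part of $R_i$ corresponds to $G_a^-$, with $(R_i)_{ev}$ acting non-trivially through the relation $[e,f]=f$. By Lemma \ref{primitiveelements}, any action of a connected even supergroup $G$ on $G_a^-$ has the form $\rho^*(z_1)=z_1\otimes g$ for $g$ group-like in $K[G]$; the only group-like element of $K[G_a]=K[t]$ is $1$, so $G_a$ acts trivially on $G_a^-$ and hence $(R_i)_{ev}\simeq G_m$. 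Thus $R_i\simeq G_a^-\rtimes G_m$, whence $R_i'\simeq G_a^-$ and $R_i/R_i'\simeq G_m$.

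For (2), the decomposition $Der(\mathfrak{U})=\bigoplus_i Der(\mathfrak{U}_i\otimes Sym(n_i))$ yields $\mathsf{Lie}(G/U)=\bigoplus_{i=1}^{s'}\mathsf{Lie}(R_i)$ as a direct sum of commuting ideals, so $Z(\mathsf{Lie}(G/U))=\bigoplus_i Z(\mathsf{Lie}(R_i))$. For $i\leq s$ each summand is abelian and contributes itself; for $s<i\leq s'$, the relation $[e,f]=f$ gives $Z(Der(Sym(1)))=0$. Thus $Z(\mathsf{Lie}(G/U))=\mathsf{Lie}(R_1\ldots R_s)$. Lemma \ref{centralsubgroup} gives $R_1\ldots R_s\leq Z(G/U)^0$, while the reverse Lie-algebra inclusion $\mathsf{Lie}(Z(G/U)^0)\subseteq Z(\mathsf{Lie}(G/U))$ (from triviality of the adjoint action of the center) promotes this to an equality of connected supergroups in characteristic zero. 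The main obstacle throughout is the $G_m$-versus-$G_a$ distinction in (1) and (3), which requires external representation-theoretic input rather than pure Lie-algebraic data.
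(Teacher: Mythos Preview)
Your proposal is essentially correct and follows the paper's strategy: compute $\mathsf{Lie}(R_i)$ from Kac's Proposition 5.1.2 and then identify the corresponding connected supergroup. Part (2) matches the paper's argument (compute $Z(\mathfrak{F})$ componentwise, then use Lemma \ref{centralsubgroup} for one inclusion and the trivial Lie-algebra inclusion for the other).

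Two remarks. In (1) you invoke ``the adjoint action of $R_i$ on $\mathsf{Lie}(U_i)=\mathfrak{U}_i$'', but $R_i=G_i/U_i$ does \emph{not} act on $\mathfrak{U}_i$: the subgroup $U_i$ acts on its own Lie algebra by inner automorphisms, which are nontrivial since $\mathfrak{U}_i$ is simple. The fix is easy and is what the paper (equally tersely) has in mind: the grading derivation $\delta$ is semisimple with integer eigenvalues on $\mathfrak{U}_i$, so it integrates to a homomorphism $G_m\to Aut(\mathfrak{U}_i)^0$ whose image lies in $G_i$ and surjects onto $R_i$; equivalently, $Der(\mathfrak{U}_i)_0$ is reductive, hence so is $(G_i)_{ev}$ and its one-dimensional quotient $(R_i)_{ev}$.

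For (3) you take a genuinely different route. The paper applies Corollary \ref{coincidenceofcommutants} directly to get $\mathsf{Lie}(R_i')=Der(Sym(1))'=Der(Sym(1))_1$, and then uses that $Der(Sym(1))_1$ is a semisimple $Der(Sym(1))_0$-module to identify the quotient. You instead first pin down $R_i\simeq G_a^-\rtimes G_m$ by your group-like element argument (which is correct: Lemma \ref{primitiveelements} forces $\rho^*(z_1)=z_1\otimes g$ with $g$ group-like, and $K[G_a]$ has only the trivial group-like), and then read off $R_i'$. Your argument is more self-contained here, while the paper's is shorter because it recycles Corollary \ref{coincidenceofcommutants}.
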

\begin{proof}
If $\mathfrak{U}_i$ is of type $A(n, n)$ or $P(n)$, then $Der(\mathfrak{U}_i)/\mathfrak{U}_i$ is one-dimensional and generated by an even derivation that acts on $\mathfrak{U}_i$ as a semisimple operator (see Proposition 5.1.2(b) of \cite{kac}). If 
$\mathfrak{U}_i$ is of type $Q(n)$, then $Der(\mathfrak{U}_i)/\mathfrak{U}_i$ is one-dimensional and odd (see Proposition 5.1.2(c) of \cite{kac}). The first statement follows. 

It is clear that $Z(\mathfrak{F})=\oplus_{1\leq i\leq s} Der(\mathfrak{U}_i)/\mathfrak{U}_i$.
Lemma \ref{factorofconnect} shows that $R_1\ldots R_s$ is connected. The second statement then follows by applications of 
Lemmas \ref{center} and \ref{centralsubgroup}.
 
Finally, $Der(Sym(1))'= Der(Sym(1))_1$ and $Der(Sym(1))_1$ is a semisimple
$Der(Sym(1))_0$-module. Corollary \ref{coincidenceofcommutants} implies the third statement.
\end{proof}
\begin{theorem}\label{finaldescription}
A connected algebraic supergroup $H$ is quasireductive if and only if all of the following conditions hold :
\begin{enumerate}
\item $R(H)_{ev}$ is a torus, or equivalently, $R(H)$ satisfies the conclusion of Lemma \ref{solvablequasired};
\item $\tilde{H}=H/R(H)$ contains normal supersubgroups $U, U_1, \ldots , U_t$ such that $U=U_1\ldots U_t$ and 
the induced morphism $U_1\times\ldots\times U_t\to U$ is a quasi-isomorphism. 
Besides, for every $1\leq i \leq t$, $\mathsf{Lie}(U_i)=\mathfrak{U}_i\otimes Sym(n_i)$ are such that either $n_i=0$ and $\mathfrak{U}_i$ is a classical simple Lie superalgebra, or $n_i=1$ and 
$\mathfrak{U}_i$ is a simple Lie algebra;
\item $\tilde{H}/U$ is a triangulizable supergroup with odd unipotent radical.  
\end{enumerate}
\end{theorem}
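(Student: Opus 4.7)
The plan is to prove both implications. The forward (necessity) direction assembles the tools developed earlier: the characterization of quasireductivity preceding Lemma \ref{solvablequasired}, Theorem \ref{sandwich}, Lemma \ref{quasiredandsemisimple}, Proposition \ref{resume}, Remark \ref{groupproduct}, and Lemma \ref{structureofGmoduloU}. The backward (sufficiency) direction reduces to a routine reductive-by-reductive argument at the level of $\tilde{H}_{ev}$.

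For necessity, assume $H$ is quasireductive. The lemma preceding Lemma \ref{solvablequasired} immediately yields (1) and guarantees that $\tilde{H} = H/R(H)$ is quasireductive. Its Lie superalgebra $\mathsf{Lie}(\tilde{H}) = \mathfrak{H}/\mathfrak{R}$ is semisimple, so Theorem \ref{sandwich} supplies a morphism $\phi : \tilde{H} \to G$ with $\ker\phi$ finite and $U \leq \phi(\tilde{H})$, where $(U, G)$ is a sandwich pair. Applying Lemma \ref{quasiredandsemisimple} to the semisimple quasireductive $\mathfrak{U} = \mathsf{Lie}(U)$ forces $\mathfrak{U} = \bigoplus_i \mathfrak{U}_i \otimes Sym(n_i)$ with $n_i \in \{0,1\}$ and the stated dichotomy for the $\mathfrak{U}_i$. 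Since $\ker\phi$ is finite and therefore has trivial Lie algebra, pulling back the normal supersubgroups $U_i \unlhd G$ supplied by Proposition \ref{resume} and Remark \ref{groupproduct} gives normal supersubgroups of $\tilde{H}$ with the same Lie algebras, and the product morphism into $\phi^{-1}(U)$ remains a quasi-isomorphism. This yields (2). For (3), Lemma \ref{structureofGmoduloU} expresses $G/U$ as a product of copies of $G_m$, $G_a^-$ and $G_m \ltimes G_a^-$, which is manifestly triangulizable with odd unipotent radical. Defining the $U$ in $\tilde{H}$ to be $\phi^{-1}$ of the $U$ in $G$, the induced morphism $\tilde{H}/U \to G/U$ is a closed embedding, and a closed connected supersubgroup of such a triangulizable supergroup inherits triangulizability together with the odd nature of its unipotent radical.

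For sufficiency, assume (1)--(3). By the lemma preceding Lemma \ref{solvablequasired}, it suffices to show $(\tilde{H}_{ev})_u = 1$. The Lie algebra $\mathsf{Lie}(U_{ev}) = \bigoplus_i (\mathfrak{U}_i \otimes Sym(n_i))_0$ is reductive: for $n_i = 0$ the summand is the even part of a classical simple Lie superalgebra, which is reductive by the classification in \cite{frsorsci}; for $n_i = 1$ it equals the semisimple Lie algebra $\mathfrak{U}_i$. In characteristic zero, the connected algebraic group $U_{ev}$ is therefore reductive. Condition (3) makes $(\tilde{H}/U)_{ev}$ a torus, so $\tilde{H}_{ev}/U_{ev}$ embeds into a torus and has trivial unipotent radical. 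Consequently $(\tilde{H}_{ev})_u$ lives inside $U_{ev}$ as a normal unipotent supersubgroup of a reductive group, forcing $(\tilde{H}_{ev})_u = 1$.

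The main obstacle I anticipate is the transfer of the product factorization from $G$ to $\tilde{H}$: one must confirm that the preimages of the $U_i \unlhd G$ under $\phi$ really have Lie algebras $\mathfrak{U}_i \otimes Sym(n_i)$, that the multiplication morphism $\prod_i \phi^{-1}(U_i) \to \phi^{-1}(U)$ remains a quasi-isomorphism, and that $\tilde{H}/U \hookrightarrow G/U$ genuinely realises $\tilde{H}/U$ as a closed supersubgroup with odd unipotent radical. Each step follows from finiteness of $\ker\phi$ together with standard facts about closed supersubgroups of triangulizable supergroups in characteristic zero, but coordinating them is where the technical care is concentrated.
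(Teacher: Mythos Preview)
Your proposal is correct and follows essentially the same approach as the paper. For necessity the paper simply writes ``the necessary condition is now obvious,'' meaning exactly the assembly of Theorem~\ref{sandwich}, Lemma~\ref{quasiredandsemisimple}, Proposition~\ref{resume}, Remark~\ref{groupproduct}, and Lemma~\ref{structureofGmoduloU} that you spell out; for sufficiency the paper also argues via a filtration of $H_{ev}$ with successive quotients a torus, a group with reductive Lie algebra, and a torus, which is the same reductive-by-reductive idea you use (you route it through the reduction to $\tilde H$ first, but the content is identical).

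One small imprecision: Lemma~\ref{quasiredandsemisimple} should be applied to the semisimple quasireductive $\mathsf{Lie}(\tilde H)$ rather than to $\mathfrak U$; the lemma takes the ambient semisimple quasireductive superalgebra as input and constrains the associated $\mathfrak U$. The conclusion you draw is unaffected. Your flagged obstacle about transferring the structure along $\phi$ and inheriting triangulizability for $\tilde H/U$ is real but is handled by noting that $(\tilde H/U)_{ev}$ embeds in the torus $(G/U)_{ev}$ (by Lemma~\ref{structureofGmoduloU}), so it is itself a torus; the paper leaves this implicit as well.
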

\begin{proof}
The necessary condition is now obvious. Conversely, if $H$ satisfies the conditions of this theorem, then 
$H_{ev}$ has normal subgroups, say $L_1$ and $L_2$ such that $H/L_1$ and $H/L_2$ are toruses and $\mathsf{Lie}(H/L_1)$ is reductive.
Therefore $H/L_1$ is linearly reductive which implies that $(H_{ev})_u=1$.  
\end{proof}

\section*{Acknowledgments}
The second author was supported by grants from the Russian Ministry of Education and Science (grants 14.V37.21.0359 and 0859) and by 
Brazilian FAPESP 12/50027-0. He thanks both of them for their support.

\end{document}